\title{Physical measures of discretizations of generic diffeomorphisms}
\author{Pierre-Antoine Guihéneuf}
\address{Universit\'e Paris-Sud \\ Universidade Federal Fluminense, Niterói}
\email{pierre-antoine.guiheneuf@math.u-psud.fr}
\subjclass{37M25, 37M05, 37A05, 37C20}
\newtheorem{lemme}{Lemma}
\newtheorem{theoreme}[lemme]{Theorem}
\newtheorem{prop}[lemme]{Proposition}
\newtheorem{coro}[lemme]{Corollary}
\newtheorem{add}[lemme]{Addendum}
\newtheorem{theo}{Theorem}
\theoremstyle{definition}
\newtheorem{definition}[lemme]{Definition}
\theoremstyle{remark}
\newtheorem{rem}[lemme]{Remark}
\newcommand{\N}{\mathbf{N}}
\newcommand{\R}{\mathbf{R}}
\newcommand{\T}{\mathbf{T}}
\newcommand{\Z}{\mathbf{Z}}
\newcommand{\varep}{\varepsilon}
\newcommand{\Hom}{\operatorname{Homeo}}
\newcommand{\Leb}{\operatorname{Leb}}
\newcommand{\Diff}{\operatorname{Diff}}
\newcommand{\card}{\operatorname{Card}}
\newcommand{\dist}{\operatorname{dist}}
\newcommand{\1}{\mathbf 1}
\newcommand{\Id}{\operatorname{Id}}
\newcommand{\im}{\operatorname{im}}
\newcommand{\Sp}{\mathbf{S}}
\newenvironment{abstracts}{%
  \ifx\maketitle\relax
    \ClassWarning{\@classname}{Abstract should precede
      \protect\maketitle\space in AMS document classes; reported}%
  \fi
  \global\setbox\abstractbox=\vtop \bgroup
    \normalfont\Small
    \list{}{\labelwidth\z@
      \leftmargin3pc \rightmargin\leftmargin
      \listparindent\normalparindent \itemindent\z@
      \parsep\z@ \@plus\p@
      
      \itemsep\bigskipamount
    }%
}{%
  \endlist\egroup
  \ifx\@setabstract\relax \@setabstracta \fi
}
\newcommand{\abstractin}[1]{%
  \otherlanguage{#1}%
  \item[\hskip\labelsep\scshape\abstractname.]%
}
\begin{document}

\begin{abstracts}
\abstractin{english}
What is the ergodic behaviour of numerically computed segments of orbits of a diffeomorphism? In this paper, we try to answer this question for a generic conservative $C^1$-diffeomorphism, and segments of orbits of Baire-generic points. The numerical truncation will be modelled by a spatial discretization. Our main result states that the uniform measures on the computed segments of orbits, starting from a generic point, accumulates on the whole set of measures that are invariant under the diffeomorphism. In particular, unlike what  could be expected naively, such numerical experiments do not see the physical measures (or more precisely, cannot distinguish physical measures from the other invariant measures).

\abstractin{french}
Que se passe-t-il d'un point de vue ergodique lorsqu'on calcule de longs segments d'orbite d'un difféomorphisme, à précision numérique fixée ? On tente ici de répondre à cette question pour un $C^1$-difféomorphisme conservatif générique, et pour la plupart des orbites au sens de Baire. L'opération de troncation numérique sera modélisée par une discrétisation spatiale. Notre principal résultat exprime que les mesures uniformes sur les segments d'orbites calculés, en partant d'un point générique, s'accumulent sur l'ensemble des mesures invariantes par le difféomorphisme. En particulier, de telles simulations numériques ne permettent pas de distinguer les mesures physiques du système parmi toutes les autres mesures invariantes.
\end{abstracts}

\selectlanguage{english}

\maketitle

\setcounter{tocdepth}{1}
\tableofcontents

\section{Introduction}

This paper is devoted to the study of the physical measures of the discretizations of a generic conservative diffeomorphism. Recall the classical definition of a physical measure for a map $f$.

\begin{definition}\label{sport}
Let $X$ be a compact manifold equipped with a Lebesgue measure $\Leb$, and $f : X\to X$. A Borel probability measure $\mu$ is called \emph{physical} (also called Sinai-Ruelle-Bowen) for the map $f$ if its basin of attraction has positive Lebesgue measure, where the \emph{basin of attraction} of $\mu$ for $f$ is the set
\[\left\{x\in X\ \big\vert\ \frac{1}{M} \sum_{m=0}^{M-1} \delta_{f^m(x)} \underset{M\to+\infty}{\longrightarrow} \mu\right\}\]
of points whose Birkhoff's limit coincides with $\mu$ (where the convergence of measures is taken in the sense of weak-* topology).
\end{definition}

Heuristically, the physical measures are the ones that can be observed in practice, because they are ``seen'' by a ``large ''set of points $x$.

The question of existence, stochastic stability, dependence with respect to parameters, etc. have been extensively studied (see for example the quite old surveys \cite{MR1933431} and \cite{VianaStoch}). Here, our aim is to study similar concepts in the view of discretizations: which measures can be seen by the discretizations of generic conservative $C^1$-diffeomorphisms?
\bigskip

In this paper we will consider that the space phase is the torus $\T^n$ endowed with Lebesgue measure. We will model the numerical truncation made by a computer by a spatial discretization. Thus, we define the uniform grids
\[E_N = \left\{ \left(\frac{i_1}{N},\cdots,\frac{i_n}{N}\right)\in \R^n/\Z^n \middle\vert\ 1\le i_1,\cdots,i_n \le N\right\}.\]
In particular, if $N=10^k$, then $E_N$ represents the set of points whose coordinates are decimal numbers with a most $k$ decimal places. We then take $P_N : \T^n\to E_N$ a projection on the nearest point of $E_N$; in other words $P_N(x)$ is (one of) the point(s) of $E_N$ which is the closest from $x$. This allows to define the discretizations of $f$.

\begin{definition}
The \emph{discretization} $f_N : E_N\to E_N$ of $f$ on the grid $E_N$ is the map $f_N = P_N\circ f_{|E_N}$.
\end{definition}

In particular, if $N=10^k$, then $f_N$ models the map which is iterated by the computer when it works with $k$ digits.

We will see in Section~\ref{AddendSett} that the quite restrictive framework of the torus $\T^n$ equipped with the uniform grids can be generalized to arbitrary manifolds, provided that the discretizations grids behave locally (and almost everywhere) like the canonical grids on the torus.

We denote by $\mu_{x}^{f_N}$ the limit of the Birkhoff sums
\[\frac{1}{M} \sum_{m=0}^{M-1} \delta_{f_N^m(x_N)}.\]
More concretely, $\mu_{x}^{f_N}$ is the $f_N$-invariant probability measure supported by the periodic orbit on which the positive orbit of $x_N=P_N(x)$ falls after a while. We would like to know the answer the following question: for a generic conservative $C^1$-diffeomorphism $f$, does the sequence of measure $\mu_{x}^{f_N}$ tend to a physical measure of $f$ for most of the points $x$ as $N$ goes to infinity?

The corresponding $C^0$ case has already been treated in \cite{Guih-discr}:

\begin{theoreme}[Guihéneuf]\label{mesinv}
For a generic homeomorphism $f\in\Hom(\T^n,\Leb)$, for any $f$-invariant probability measure $\mu$, there exists a subsequence $(N_k)_k$ of discretizations such that for any point $x\in \T^n$,
\[\mu_{x}^{f_{N_k}} \underset{k\to+\infty}{\longrightarrow} \mu.\]
\end{theoreme}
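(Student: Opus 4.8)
The plan is a Baire category argument in $\Hom(\T^n,\Leb)$. Fix a countable dense family $(\nu_j)_{j}$ in $\Prb(\T^n)$ and a metric $d$ inducing the weak-$*$ topology, and for $j,l,N_0\in\N$ let $U_{j,l,N_0}$ be the set of $f$ such that \emph{either} $\dist\big(\nu_j,\Prb_f(\T^n)\big)>\tfrac1{2l}$, where $\Prb_f(\T^n)$ is the set of $f$-invariant probability measures, \emph{or} there is some $N\ge N_0$ at which $f$ is non-degenerate (no $f(x)$, $x\in E_N$, is equidistant from two points of $E_N$) and $f_N$ has a \emph{unique} periodic orbit, which attracts all of $E_N$ and carries a measure $\tfrac1l$-close to $\nu_j$. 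Granting that each $U_{j,l,N_0}$ is open and dense, the residual set $\mathcal G=\bigcap_{j,l,N_0}U_{j,l,N_0}$ proves the theorem: for $f\in\mathcal G$, an $f$-invariant $\mu$ and $k\in\N$, choose $\nu_j$ with $d(\nu_j,\mu)<\tfrac1{4k}$ and put $l=2k$; the first alternative then fails, so for every $N_0$ there is $N\ge N_0$ with $f_N$ having a single, globally attracting periodic orbit whose measure lies within $\tfrac3{4k}$ of $\mu$; taking $N_0=N_0(k)\to\infty$ yields a subsequence $(N_k)$, and uniqueness of the periodic orbit makes $\mu_x^{f_{N_k}}$ one and the same measure, converging to $\mu$, for every $x$.

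Openness of $U_{j,l,N_0}$ is routine: $f\mapsto\dist(\nu_j,\Prb_f(\T^n))$ is lower semi-continuous because invariant measures vary upper semi-continuously with $f$, so the first condition is open; and non-degeneracy at scale $N$ is an open condition near which $f\mapsto f_N$ is locally constant, so the second is open too.

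The substance of the proof is density. Fix $f$ and a $C^0$-neighbourhood $V$; if some $g\in V$ meets the first alternative we are done, so assume every $g\in V$ — hence $f$ — has an invariant measure within $\tfrac1{2l}$ of $\nu_j$, and fix such a $\mu\in\Prb_f(\T^n)$. I will produce $g'\in V$ meeting the second alternative, for a small parameter $\varepsilon>0$ and $N\ge N_0$ large. \emph{(i)} First I build a closed $\varepsilon$-pseudo-orbit of $f$ in $\T^n$ whose empirical measure is $\tfrac1{2l}$-close to $\mu$. Using the ergodic decomposition $\mu=\int\mu_\omega\ud P(\omega)$, Birkhoff's theorem provides genuine orbit segments of $f$ whose empirical measures approximate the ergodic components of a finite convex combination close to $\mu$ — atomic components being ``fattened'' through nearby grid points so as not to force revisits — and I splice these long segments together with short connecting pseudo-orbits. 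Here the key input is that a conservative homeomorphism of $\T^n$ admits, for every $\varepsilon>0$, an $\varepsilon$-pseudo-orbit between any ordered pair of points: the set of points $\varepsilon$-reachable from a given one is open, $f$-invariant, and, being sent by $f$ into its own $\varepsilon$-interior while $f$ preserves volume, dense — hence everything. \emph{(ii)} For $N$ large the projections $q_i=P_N(p_i)$ of this cycle are pairwise distinct, satisfy $d(q_{i+1},f(q_i))<\varepsilon$, and the uniform measure on $\mathcal O=\{q_i\}$ is still within $\tfrac1{2l}$ of $\mu$. \emph{(iii)} I extend the cyclic assignment $q_i\mapsto q_{i+1}$ to a map $\sigma\colon E_N\to E_N$ with $d(\sigma(x),f(x))<\varepsilon$ everywhere whose functional graph is a forest rooted on the cycle $\mathcal O$; this works because the pseudo-orbit transitivity noted in (i) furnishes, for each $w\in E_N$, a bounded-length $\varepsilon$-pseudo-orbit on $E_N$ from $w$ to $\mathcal O$, so a ``shortest distance to $\mathcal O$'' rule selects $\sigma(w)$ without ever creating a new cycle; thus $\mathcal O$ is the unique periodic orbit of $\sigma$ and attracts all of $E_N$. \emph{(iv)} Finally, since $\sigma$ is $\varepsilon$-close to $f$, a realization lemma à la Lax — choose distinct $y_x$ in the open set $f^{-1}\big(\operatorname{int}P_N^{-1}\{\sigma(x)\}\big)$ and a volume-preserving homeomorphism $\phi$ close to $\Id$ with $\phi(x)=y_x$ on $E_N$, then set $g'=f\circ\phi$ — yields $g'\in\Hom(\T^n,\Leb)$ with $g'_N=\sigma$, non-degenerate at scale $N$ by the choice of the $y_x$ in cell interiors, and with $d(f,g')$ as small as we like; so $g'\in V\cap U_{j,l,N_0}$.

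The hard part is steps (iii)–(iv): manufacturing a finite map $\sigma$ that has a single globally attracting periodic orbit with prescribed empirical measure and is at the same time $\varepsilon$-close to $f$ and genuinely of the form $P_N\circ g'_{|E_N}$ for a conservative $g'$ near $f$. The two facts that carry this through — that conservative homeomorphisms are totally chain transitive (a volume obstruction to trapping regions) and that finite grid maps sufficiently close to $f$ are realizable by conservative homeomorphisms close to $f$ (Lax's theorem and its refinements, exploited throughout the $C^0$ theory of discretizations) — are precisely where the conservative and $C^0$ hypotheses enter, which is why the $C^1$ version pursued in this paper, deprived of Lax's theorem, requires an entirely different approach.
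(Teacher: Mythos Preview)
The paper does not prove this theorem: it is quoted from \cite{Guih-discr} as background for the $C^1$ results that follow, so there is no ``paper's own proof'' here to compare against.

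That said, your outline is correct and is precisely the standard strategy of that reference: a Baire argument in $\Hom(\T^n,\Leb)$, reducing density of each $U_{j,l,N_0}$ to the construction, for any target measure and any large $N$, of a conservative $g'$ near $f$ whose discretization $(g')_N$ has a \emph{single} globally attracting cycle with prescribed empirical measure. The four ingredients you isolate --- a closed $\varep$-pseudo-orbit with the right empirical measure (via ergodic decomposition and the chain transitivity of conservative homeomorphisms), its projection to $E_N$, the extension to a rooted-forest map $\sigma$ on $E_N$, and the Lax-type realization of $\sigma$ by a conservative homeomorphism --- are exactly those used there. Two places deserve a word of care: in (iii) the chain transitivity you invoke lives in $\T^n$, not on $E_N$, so take the pseudo-orbit tolerance a bit below $\varep$ before projecting; and in (iv) you need the points $y_x$ to be both pairwise distinct and within $O(\varep)$ of $x$, which follows once $1/N\ll\varep$ since $d(\sigma(x),f(x))<\varep$ forces $f^{-1}\big(P_N^{-1}\{\sigma(x)\}\big)$ to meet a small ball around $x$. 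Your closing remark is also on point: it is exactly the unavailability of this Lax-type realization in the $C^1$ category that forces the entirely different machinery (linear discretizations, connecting lemmas, local linearization) developed in the present paper.
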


It implies in particular that for a generic homeomorphism $f\in\Hom(\T^n,\Leb)$ and every $x\in\T^n$, the measures $\mu^{f_N}_x$ accumulate on the whole set of $f$-invariant measures when $N$ goes to infinity (moreover, given an $f$-invariant measure $\mu$, the sequence $(N_k)_{k\ge 0}$ such that $\mu^{f_{N_k}}_x$ tends to $\mu$ can be chosen independently of $x$). In a certain sense, this theorem in the case of homeomorphisms expresses that from the point of view of the discretizations, all the $f$-invariant measures are physical.

In the $C^1$-case, it can be easily obtained that for a generic conservative $C^1$-diffeomorphism $f$, any $f$-invariant measure is the limit of a sequence of $f_N$-invariant measures (Corollary~10.9 of \cite{Guih-These}). This is a consequence of an ergodic closing lemma of R.~Mañé and F.~Abdenur, C.~Bonatti and S.~Crovisier (see \cite{MR2811152}); however it does not say anything about the basin of attraction of these discrete measures.

In this paper, we improve this statement for generic conservative $C^1$-diffeomorphisms, in order to describe the basin of attraction of the discrete measures. In particular, we prove the following result (Theorem~\ref{TheoMesPhysDiff}).

\begin{theo}\label{TheoA}
For a generic diffeomorphism $f\in\Diff^1(\T^n,\Leb)$, for a generic point $x\in \T^n$, for any $f$-invariant probability measure $\mu$, there exists a subsequence $(N_k)_k$ of discretizations such that
\[\mu_{x}^{f_{N_k}} \underset{k\to+\infty}{\longrightarrow} \mu.\]
\end{theo}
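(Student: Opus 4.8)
The plan is to combine three ingredients: a genericity argument on the diffeomorphism side (to know that $f$ can be well approximated by $C^1$-perturbations with prescribed periodic orbits carrying measures close to $\mu$), a precise control of the discretization near such periodic orbits, and a Baire-category argument on the point $x$. The global structure mimics Theorem~\ref{mesinv}, but one must work with the $C^1$-topology and hence cannot move $f$ freely; the closing-lemma technology of Mañé--Abdenur--Bonatti--Crovisier quoted in the excerpt is what replaces the naive $C^0$ perturbations.

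First I would set up, for a fixed countable dense family $(\mu_j)_j$ of $f$-invariant measures and a fixed rational parameter $\varepsilon>0$, the "target" open condition: there is an integer $N$, arbitrarily large, a periodic orbit $\mathcal O$ of $f_N$ of period $q$, such that the uniform measure on $\mathcal O$ is $\varepsilon$-close to $\mu_j$, and moreover the basin of this periodic orbit under $f_N$ contains the discretization $P_N(x)$. The crucial point is that, by genericity of $f$ and the ergodic closing lemma, $f$ itself (not a perturbation) has, for infinitely many $N$, a periodic pseudo-orbit that the discretization $f_N$ genuinely closes into a periodic orbit supporting a measure near $\mu_j$; this is where I expect to have to be careful, because one needs the shadowing to be compatible with the rounding operator $P_N$ at scale $1/N$, i.e. the $C^1$-smallness of the perturbation used in the closing lemma must be converted into a statement at the fixed discretization scale. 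One should exploit that a generic $f\in\Diff^1(\T^n,\Leb)$ has dense periodic orbits of all "types" (by Bonatti--Crovisier connecting lemma arguments, already in \cite{Guih-These}), so that near any $x$ and for suitable large $N$ there is an actual periodic orbit of $f$ passing $\varepsilon$-close to $x$ whose measure is near $\mu_j$; then the discretization at scale $1/N$ with $N$ large relative to the hyperbolicity/expansion constants of that orbit will land the point $P_N(x)$ into its basin.

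Second, I would make the dependence on $x$ Baire-generic rather than full-measure. Fix $j$ and $\varepsilon$; the set
\[
G_{j,\varepsilon} = \Big\{x\in\T^n \ \Big|\ \exists N,\ d\big(\mu_x^{f_N},\mu_j\big)<\varepsilon\Big\}
\]
is open (for fixed $N$ the condition "$P_N(x)$ falls in a given periodic orbit" is open in $x$ once $x$ is not on the grid-cell boundaries) and, by the previous paragraph, dense. Then
\[
\bigcap_{j}\bigcap_{\varepsilon\in\Q_{>0}} G_{j,\varepsilon}
\]
is a dense $G_\delta$ of points $x$ for which $\mu_x^{f_{N_k}}\to\mu_j$ along a subsequence, for every $j$, hence (by density of $(\mu_j)$ and a diagonal extraction of the subsequence) for every $f$-invariant $\mu$. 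Finally one runs the analogous Baire argument in the variable $f$: the set of $f\in\Diff^1(\T^n,\Leb)$ realizing the above is a countable intersection of open dense sets, the openness coming from the fact that the finitely many discretization data ($E_N$, $f_N$ on a finite set) vary well with $f$ in the $C^1$-topology.

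The main obstacle, as flagged, is the interface between the $C^1$ perturbation lemmas and the rounding: one must guarantee that the periodic (pseudo-)orbit produced abstractly is actually shadowed by a true periodic orbit of the finite map $f_N = P_N\circ f$, which requires $N$ large compared to the Lipschitz constant of $f$ and to the return time $q$ of the orbit, and requires controlling that the successive rounding errors (of size $\le \tfrac{1}{2N}$) do not accumulate to destroy periodicity — this is where a local contraction or a normally-hyperbolic/elliptic structure near the orbit, provided by the generic setting, must be invoked. Once that technical lemma is in place, the two Baire arguments (in $x$ and in $f$) and the diagonal extraction are routine.
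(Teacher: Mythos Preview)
Your Baire-category scaffolding (in both $f$ and $x$) is broadly the same as the paper's, and reducing to an approximation lemma of the form ``given $f$, $\mu$, an open set $U$, a neighbourhood $\mathcal V$ of $f$, $\varepsilon>0$ and $N_0$, there exist $g\in\mathcal V$, $y\in U$ and $N\ge N_0$ with $\dist(\mu,\mu_y^{g_N})<\varepsilon$'' is exactly what the paper does. So the outer architecture is fine.

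The genuine gap is the mechanism you propose for the central step: why the $f_N$-orbit of $P_N(x)$ actually lands on the discretized periodic orbit. You write that ``the discretization at scale $1/N$ with $N$ large relative to the hyperbolicity/expansion constants of that orbit will land the point $P_N(x)$ into its basin'', and later that one should invoke ``a local contraction or a normally-hyperbolic/elliptic structure near the orbit''. In the conservative setting this is precisely what does \emph{not} happen: the periodic orbit is a saddle, $\det Df^{\tau}_x=1$, and there is no transverse contraction onto it. Rounding errors along the unstable direction are amplified, not damped, so there is no reason for a nearby grid point to be attracted. Your heuristic would work for a sink in the dissipative case, but not here.

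The paper's replacement for this missing contraction is a completely different idea, coming from the linear theory of discretizations (Lemma~\ref{DerTheoPart2}, resting on Lemma~\ref{ConjPrincip}): for a generic sequence $(A_k)_k\in\ell^\infty(SL_n(\R))$, the asymptotic rate of injectivity of $\widehat{A_k}\circ\cdots\circ\widehat{A_1}$ is zero. Concretely, after linearizing $g$ in a neighbourhood of the periodic orbit (via Lemma~\ref{LemExtension}) and perturbing the differentials so they are generic in this sense, one can find a point $z$ at distance $\ge R_0/N$ from the orbit whose discretized forward orbit \emph{merges} with the discretized periodic orbit after a bounded number of steps. This non-injectivity, not any hyperbolic attraction, is what forces $P_N(y)$ into the basin. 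The remaining work --- the improved ergodic closing lemma (Lemma~\ref{ErgoLemPlus}) controlling stable/unstable subspaces, the connecting lemma for pseudo-orbits to bring the stable manifold through $U$, the careful sequence of perturbations (Steps~6--8) to steer the real orbit of $y$ onto $z$ and then put the whole segment on the grid --- is all in service of setting up and exploiting that linear merging phenomenon. None of this is captured by ``$N$ large relative to the hyperbolicity constants''; without the rate-of-injectivity input, the argument does not close.

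A secondary issue: you oscillate between ``$f$ itself (not a perturbation)'' and perturbing. The paper genuinely perturbs $f$ to $g$ several times (closing lemma, connecting lemma, Franks lemma, linearization, local translations), and only afterwards uses openness of the resulting finite-data condition to pass back to a $G_\delta$ in $\Diff^1(\T^n,\Leb)$. Trying to realize the periodic orbit and its discretized basin for $f$ directly, without perturbing, is not what the available $C^1$ technology gives.
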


Notice that given an $f$-invariant measure $\mu$, the sequence $(N_k)_k$ such that $\mu_{x}^{f_{N_k}}$ converges to $\mu$ depends on the point $x$, contrary to what happens in the $C^0$ case.

Remark that in Theorem~\ref{TheoA}, the generic set of points $x$ depends on the diffeomorphism. However, we will also prove that if we fix a countable subset $D\subset \T^n$, then for a generic conservative $C^1$-diffeomorphism $f$ and for any $x\in D$, the measures $\mu^{f_N}_x$ accumulate on the whole set of $f$-invariant measures (Addendum~\ref{AddTheoMesPhysDiff}). This is a process that is usually applied in practice to detect the $f$-invariant measures: fix a finite set $D\subset \T^n$ and compute the measure $\mu^{f_N}_x$ for $x\in D$ and for a large order of discretization $N$. Our theorem expresses that it is possible that the measure that we observe on numerical experiments is very far away from the physical measure.

\label{AvilaErgo}Note that in the space $\Diff^1(\T^n,\Leb)$, there are open sets where generic diffeomorphisms are ergodic: the set of Anosov diffeomorphisms is open in $\Diff^1(\T^n,\Leb)$, and a generic Anosov conservative $C^1$-diffeomorphism is ergodic (it is a consequence of the fact that any $C^2$ Anosov conservative diffeomorphism is ergodic (see for instance \cite{MR0224771}), together with the theorem of regularization of conservative diffeomorphisms of A.~Avila \cite{MR2736152}). More generally, A.~Avila, S.~Crovisier and A.~Wilkinson have set recently in \cite{ArturSylvain} a generic dichotomy for a conservative diffeomorphism $f$: either $f$ is ergodic, either all the Lyapunov exponents of $f$ vanish. In short, there are open sets where generic conservative diffeomorphisms have only one physical measure; in this case, our result asserts that this physical measure is not detected on discretizations by computing the measures $\mu^{f_N}_x$.

Recall that results of stochastic stability are known to be true in various contexts (for example, expanding maps \cite{MR884892},\cite{MR874047}, \cite{MR685377}, uniformly hyperbolic attractors \cite{MR874047}, \cite{MR857204}, etc.). These theorems suggest that the physical measures can always be observed in practice, even if the system is noisy. Theorem~\ref{TheoA} indicates that the effects of discretizations (i.e. numerical truncation) might be quite different from those of a random noise.

However, we shall remark that the arguments of the beginning of the proof of Theorem~\ref{TheoA} implies that for a generic diffeomorphism $f\in\Diff^1(\T^n,\Leb)$ and a generic point $x\in\T^n$ (or equivalently, for any $x\in\T^n$ and for a generic $f\in\Diff^1(\T^n,\Leb)$), the measures
\[\mu^f_{x,m} = \frac 1m \sum_{i=0}^{m-1}{f}_*^i \delta_x\]
accumulate on the whole set of $f$-invariant measures. Thus, it would be nice to obtain a statement similar to Theorem~\ref{TheoA}, but where the hypothesis ``for a Baire-generic set of points $x$'' would be replaced by ``for Lebesgue almost all point $x$''.
\bigskip

Note that Theorem~\ref{TheoA} does not say anything about the measures $\mu_{\T^n}^{f_N}$, defined as follows. Consider $\Leb_N$ the uniform measure on the grid $E_N$ and set
\[\mu_{\T^n}^{f_N} = \lim_{M\to+\infty} \frac{1}{M} \sum_{m=0}^{M-1} (f_N^m)_*(\Leb_N).\]
The limit in the previous equation is well defined: the measure $\mu_{\T^n}^{f_N}$ is supported by the union of periodic orbits of $f_N$, and the total measure of each of these periodic orbits is proportional to the size of its basin of attraction under $f_N$. For now, the theoretical study of the measures $\mu_{\T^n}^{f_N}$ for generic conservative $C^1$-diffeomorphisms seems quite hard, as this kind of questions is closely related to the still open problem of genericity of ergodicity among these maps (see \cite{ArturSylvain} for the most recent advances on this topic).

On numerical simulations of these measures $\mu_{\T^n}^{f_N}$, it is not clear whether they converge towards Lebesgue measure or not (see Figures~\ref{MesC1IdCons2p}, \ref{MesC1AnoCons2p} and \ref{MesC1AnoConsSerie}). However, one can hope that their behaviour is not as erratic as for generic conservative homeomorphisms, where they accumulate on the whole set of $f$-invariant measures (see Section~4.6 of \cite{Guih-discr}). Indeed, \cite{Gui15a} shows that a simple dynamical invariant associated to each discretization $f_N$ (the degree of recurrence) converges to 0 as $N$ tends to $+\infty$ for a generic conservative $C^1$-diffeomorphism, while it accumulates on the whole segment $[0,1]$ for generic conservative homeomorphisms; this shows that the discretizations of generic conservative $C^1$-diffeomorphism and homeomorphisms might have quite different dynamical characteristics.
\bigskip

At the end of this paper, we also present numerical experiments simulating the measures $\mu_x^{f_N}$ for some examples of conservative $C^1$-diffeomorphisms $f$ of the torus. The results of these simulations are quite striking for an example of $f$ $C^1$-close to $\Id$ (see Figure~\ref{MesPhysIdC1}): even for very large orders $N$, the measures $\mu_x^{f_N}$ do not converge to Lebesgue measure at all, and depend quite dramatically on the integer $N$. This illustrates perfectly Theorem~\ref{TheoMesPhysDiff} (more precisely, Addendum~\ref{AddTheoMesPhysDiff}), which states that if $x$ is fixed, then for a generic $f\in\Diff^1(\T^2,\Leb)$, the measures $\mu_x^{f_N}$ accumulate on the whole set of $f$-invariant measures, but do not say anything about, for instance, the frequency of orders $N$ such that $\mu_x^{f_N}$ is not close to Lebesgue measure. Moreover, the same phenomenon (although less pronounced) occurs for diffeomorphisms close to a linear Anosov automorphism (Figure~\ref{MesPhysAnoC1}).
\bigskip

The proof of Theorem~\ref{TheoA} uses crucially results about the dynamics of discretizations of generic sequences of linear maps: the discretization of a linear map $A\in SL_n(\R)$ is a map $\widehat A : \Z^n\to Z^n$, such that $\widehat A(x)$ is the point of $\Z^n$ which is the closest of $Ax$. In particular, Lemma~\ref{DerTheoPart2} (whose proof is in appendix, because it is already done it \cite{Gui15a}) expresses that the preimage of some points of $\Z^n$ by a generic sequence of discretizations have a big cardinality. This will allow us to merge some orbits of the discretizations. The proof of Theorem~\ref{TheoA} also uses two connecting lemmas (the connecting lemma for pseudo-orbits of \cite{MR2090361} and an improvement of the ergodic closing lemma of \cite{MR2811152}) and a statement of local linearization of a $C^1$-diffeomorphism (Lemma~\ref{LemExtension}) involving the regularization result due to A.~Avila \cite{MR2736152}.
\bigskip

\subsection{Acknowledgements}

Je remercie très chaleureusement Sylvain Crovisier pour son aide précieuse concernant les lemmes de perturbation, ainsi que François Béguin à qui cet article doit beaucoup.

\section{Statement of the theorem and sketch of proof}\label{Section!}

We recall the statement of the main theorem of this paper (stated as Theorem~\ref{TheoA} in the introduction).

\begin{theoreme}\label{TheoMesPhysDiff}
For a generic diffeomorphism $f\in\Diff^1(\T^n,\Leb)$, for a generic point $x\in \T^n$, for any $f$-invariant probability measure $\mu$, there exists a subsequence $(N_k)_k$ of discretizations such that
\[\mu_{x}^{f_{N_k}} \underset{k\to+\infty}{\longrightarrow} \mu.\]
\end{theoreme}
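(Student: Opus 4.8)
The plan is to combine a Baire-category argument (to get genericity of the point $x$) with a series of perturbation lemmas (to get genericity of the diffeomorphism $f$), in the spirit of the $C^0$ proof of Theorem~\ref{mesinv} but with the substantial extra difficulty that $C^1$-perturbations are far more rigid than $C^0$-ones. Let me sketch how I would organize it.

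=== Setup of the Baire argument ===

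First I would set up the double genericity. Fix a countable dense set $\{\mu_j\}_{j\ge 1}$ in the (compact metrizable) space $\Prb(\T^n)$ of probability measures, and a countable basis $\{V_k\}$ of weak-$*$ neighbourhoods. The goal ``$\mu_x^{f_{N_k}}\to\mu$ for every $f$-invariant $\mu$'' is equivalent to: for every $f$-invariant $\mu$, every $\varep>0$ and every $N_0$, there exists $N\ge N_0$ with $\mu_x^{f_N}$ $\varep$-close to $\mu$. Since the set of $f$-invariant measures is a compact subset of $\Prb(\T^n)$, it suffices to handle a countable dense family of invariant measures; and since invariant measures of a $C^1$-generic $f$ are themselves well approximated (by the ergodic closing lemma of \cite{MR2811152}, periodic measures are dense in the invariant measures for a $C^1$-generic $f$), I can reduce to approximating periodic measures of $f$. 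So the target set, for fixed $f$, is
\[ \bigcap_{j,k}\ \bigcup_{N\ge k}\ \Big\{ x\in\T^n\ \big\vert\ \mu_x^{f_N}\in V_j,\ \text{and } V_j\cap\{\text{inv.\ meas.}\}\ne\emptyset \text{ is witnessed}\Big\}, \]
and I must show each $\bigcup_{N\ge k}\{\cdots\}$ is open and dense in $\T^n$ for a $C^1$-generic $f$. Openness is immediate: $x\mapsto \mu_x^{f_N}$ is locally constant on $\T^n\setminus P_N^{-1}(\text{ambiguous points})$, hence the condition $\mu_x^{f_N}\in V_j$ is open. The heart of the matter is density, which is where the diffeomorphism must be perturbed.

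=== The perturbation: merging orbits onto a prescribed periodic measure ===

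Fix $f$ in a suitable residual set, a periodic orbit $\mathcal O$ of $f$ carrying a measure $\mu_{\mathcal O}$ close to the target $\mu_j$, a point $x\in\T^n$, and a scale $k$. I want to find $N\ge k$ and a $C^1$-small perturbation $g$ of $f$ (keeping $g$ in the residual set, or rather reopening the category argument so the perturbation is allowed) such that the $g_N$-orbit of $P_N(x)$ falls onto a periodic orbit whose empirical measure is close to $\mu_{\mathcal O}$. The strategy has three ingredients, exactly the three tools advertised in the introduction. (1) Use the connecting lemma for pseudo-orbits of \cite{MR2090361}: a $C^1$-generic conservative $f$ has the property that any two points can be joined by a true orbit up to small perturbation, so in particular one can route the forward orbit of $x$ to pass near the periodic orbit $\mathcal O$. (2) Use the local linearization lemma (Lemma~\ref{LemExtension}, via Avila's regularization \cite{MR2736152}): in a small neighbourhood of a point of $\mathcal O$, after a $C^1$-small perturbation, $g$ coincides with an affine map in well-chosen local charts; this lets me transfer the problem to the discretization of a \emph{linear} map. (3) Use Lemma~\ref{DerTheoPart2}: for a generic sequence of linear maps, the discretizations have points with large-cardinality preimages, so that many grid points in the linearizing chart get collapsed onto a single grid point. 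Combining: I make the discretized orbit of $P_N(x)$ enter the linearizing neighbourhood of $\mathcal O$, there it gets collapsed (by the big-preimage phenomenon applied to the sequence of derivative cocycles along $\mathcal O$) onto a point that, by construction of the perturbation, sits exactly on a grid-periodic orbit shadowing $\mathcal O$; choosing $N$ large along the generic subsequence furnished by Lemma~\ref{DerTheoPart2} makes this work while keeping the discretized periodic orbit's empirical measure within $\varep$ of $\mu_{\mathcal O}$, hence of $\mu_j$.

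=== Assembling the genericity and the main obstacle ===

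Then I would package the perturbation into the Baire framework: the set of $f$ for which ``for all $x$ in a dense open set of $\T^n$, there is $N\ge k$ with $\mu_x^{f_N}$ in the prescribed neighbourhood'' is shown to be open-and-dense (open because the relevant orbit segments and discretizations are finite-combinatorial and stable under $C^1$-perturbation; dense by the construction above), intersect over all countably many choices of $(j,k)$ and over the countable set of periodic-measure targets, and intersect with the residual sets giving the connecting-lemma property, the density of periodic measures, and the genericity needed for Lemma~\ref{DerTheoPart2}. The resulting residual set of $f$ has the property that, for each such $f$, the set of good $x$ is a countable intersection of dense open sets, hence residual in $\T^n$ — this is precisely the statement. (The Addendum with a fixed countable $D$ is the easier variant where one does not need openness in $x$, only that for each fixed $x\in D$ the good set of $f$ is residual.)

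The main obstacle I anticipate is step (2)--(3): making the local linearization and the linear big-preimage phenomenon cooperate \emph{quantitatively} with the connecting-lemma perturbation. In the $C^0$ case of Theorem~\ref{mesinv} one has enormous freedom to push orbits around and to collapse grid points by hand; in $C^1$ the perturbations are constrained by the derivative, so I must arrange that the linearizing charts are large enough (relative to the grid mesh $1/N$) to accommodate the collapsing of an orbit onto $\mathcal O$, that Lemma~\ref{DerTheoPart2} applies to the particular linear cocycle arising from $Df$ along $\mathcal O$ (after a generic perturbation of that cocycle, which is itself a perturbation lemma to be justified), and that all these perturbations — connecting, linearizing, cocycle-genericity — can be performed simultaneously and remain $C^1$-small. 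Controlling the interplay of the mesh $1/N$ with the sizes of the perturbation neighbourhoods, so that the discretization ``sees'' the collapse at arbitrarily large $N$, is the delicate point; this is where I expect to spend most of the technical work, and it is why the subsequence $(N_k)_k$ must be allowed to depend on $x$.
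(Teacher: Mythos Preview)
Your strategy is the paper's: reduce via a Baire argument to an approximation lemma (the paper's Lemma~\ref{LemMesPhysDiff}), then prove that lemma with the connecting lemma, local linearization, and Lemma~\ref{DerTheoPart2}. The Baire packaging and the identification of the three tools are correct. But two specific ideas in the merge step are missing from your sketch, and without them the construction does not close up.

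First, ``route the forward orbit of $x$ to pass near $\mathcal O$'' is too weak. Lemma~\ref{DerTheoPart2} does not say that arbitrary points in the linearizing chart get collapsed onto the periodic orbit by the discretization; it produces \emph{one specific} point $z$, at distance of order $R_0/N$ from $\omega$, whose discretized forward orbit hits $\omega$ after $t_4$ steps. You must therefore steer the real orbit coming from $U$ exactly onto $z$, and this steering must happen entirely inside the linearizing neighbourhood (so that the discrete and linear pictures agree). The paper does this by using the connecting lemma to place $y\in U$ on the \emph{stable manifold} of $\omega$, so that $f^t(y)$ remains in the linearizing charts for all $t\ge T_1$, and then perturbing \emph{along the stable leaf} (Lemmas~\ref{LemAnglesOsel} and~\ref{LemLyapPerturb}), exploiting the backward expansion of $f$ along $F^g$ to cover the whole stable segment and reach $z$. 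An orbit that merely passes near $\mathcal O$ once would exit the linearizing neighbourhood before this is possible. Second, reducing to an arbitrary periodic measure of $f$ loses control you need: the steering above, and the choice of the auxiliary times $t_1,\dots,t_4$, require a priori lower bounds on the Lyapunov exponents \emph{and} closeness of the stable/unstable subspaces of the periodic orbit to those of $\mu$ at the base point; moreover these times must be fixed \emph{before} the periodic orbit is created, since its period must exceed $t_2+t_3+t_4$. This is why the paper develops an \emph{improved} ergodic closing lemma (Lemma~\ref{ErgoLemPlus}), producing a periodic orbit of arbitrarily large prescribed period with exponents $\ge\lambda/2$ and with $F^g_x,G^g_x$ both $\varep$-close to $F^f_x,G^f_x$. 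The ordinary density of periodic measures from \cite{MR2811152} does not give this, and a periodic orbit with uncontrolled (possibly tiny) exponents or skewed Lyapunov splitting would not support the perturbations of Step~7.
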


Remark that the theorem in the $C^0$ case is almost the same, except that here, the starting point $x\in\T^n$ is no longer arbitrary but has to be chosen in a generic subset of the torus, and that the sequence $(N_k)_k$ depends on the starting point $x$. The proof of this theorem will also lead to the two following statements.

\begin{add}\label{AddTheoMesPhysDiff}
For a generic diffeomorphism $f\in\Diff^1(\T^n,\Leb)$, for any $\varep>0$ there exists a $\varep$-dense subset $(x_1,\cdots,x_m)$ such that for any $f$-invariant probability measure $\mu$, there exists a subsequence $(N_k)_k$ of discretizations such that for every $j$,
\[\mu_{x_j}^{f_{N_k}} \underset{k\to+\infty}{\longrightarrow} \mu.\]

Also, for any countable subset $D\subset \T^n$, for a generic diffeomorphism $f\in\Diff^1(\T^n,\Leb)$, for any $f$-invariant probability measure $\mu$, and for any finite subset $E\subset D$, there exists a subsequence $(N_k)_k$ of discretizations such that for every $x\in E$, we have
\[\mu_{x}^{f_{N_k}} \underset{k\to+\infty}{\longrightarrow} \mu.\]
\end{add}

The first statement asserts that if $f$ is a generic conservative $C^1$-diffeomorphism, then for any $f$-invariant measure $\mu$, there exists an infinite number of discretizations $f_N$ which possess an invariant measure which is close tu $\mu$, and whose basin of attraction is $\varep$-dense. Basically, for an infinite number of $N$ any $f$-invariant will be seen from any region of the torus.

In the second statement, a countable set of starting points of the experiment is chosen ``by the user''. This is quite close to what happens in practice: we take a finite number of points $x_1,\cdots,x_m$ and compute the measures $\mu_{x_m,T}^{f_{N_k}}$ for all $m$, for a big $N\in\N$ and for ``large'' times $T$ (we can expect that $T$ is large enough to have $\mu_{x_m,T}^{f_{N_k}} \simeq \mu_{x_m}^{f_{N_k}}$). In this case, the result expresses that it may happen (in fact, for arbitrarily large $N$) that the measures $\mu_{x_m,T}^{f_{N_k}}$ are not close to the physical measure of $f$ but are rather chosen ``at random'' among the set of $f$-invariant measures.
\bigskip

We also have a dissipative counterpart of Theorem~\ref{TheoMesPhysDiff}, whose proof is easier.

\begin{theoreme}\label{TheoMesPhysDiffDissip}
For a generic dissipative diffeomorphism $f\in\Diff^1(\T^n)$, for any $f$-invariant probability measure $\mu$ such that the sum of the Lyapunov exponents of $\mu$ is negative (or equal to 0), for a generic point $x$ belonging to the same chain recurrent class as $\mu$, there exists a subsequence $(N_k)_k$ of discretizations such that
\[\mu_{x}^{f_{N_k}} \underset{k\to+\infty}{\longrightarrow} \mu.\]
\end{theoreme}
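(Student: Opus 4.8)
The strategy follows the architecture of the proof of Theorem~\ref{TheoMesPhysDiff}. The hypothesis $\sum_i\lambda_i(\mu)\le 0$ plays here the role that is automatic in the conservative case (if $f$ preserves $\Leb$, then $\det Df\equiv 1$, hence every invariant measure satisfies $\sum_i\lambda_i=0$): it keeps us in the regime where the discretization of a linear cocycle tends to merge orbits rather than behave essentially injectively. By a standard Baire argument it suffices to prove a local density statement: for every $g_0\in\Diff^1(\T^n)$, every $g_0$-invariant ergodic $\mu$ with $\sum_i\lambda_i(\mu)\le 0$, every $\eta>0$, every $N_0\in\N$ and every non-empty open set $U_0$ meeting the chain recurrent class of $\mu$, there exist $g$ arbitrarily $C^1$-close to $g_0$, an integer $N\ge N_0$ and $\tilde x\in U_0$ with $d(\mu_x^{g_N},\mu)<\eta$ for all $x$ in a neighbourhood of $\tilde x$. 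Indeed, for fixed $N$ the map $x\mapsto P_N(x)$, and hence $x\mapsto\mu_x^{g_N}$, is locally constant off a Lebesgue-negligible set and $g\mapsto g_N$ is locally constant off a meagre set, so this becomes an open dense condition on $g$; intersecting over a countable dense family of admissible $\mu$, over $\eta$ and over $N_0$ then gives, for the generic $f$, the announced residual set of good starting points in each chain recurrent class.

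Given $g_0,\mu,\eta,N_0$, I would first apply the improved ergodic closing lemma of \cite{MR2811152} to obtain $g_1$ arbitrarily $C^1$-close to $g_0$ with a periodic orbit $\mathcal O$ of period $p$ whose uniform measure $\nu$ is $\eta/3$-close to $\mu$ and whose Lyapunov exponents are close to those of $\mu$; a further $C^1$-small perturbation of $Dg_1$ along the finite set $\mathcal O$ makes the product $B$ of the linear parts satisfy $|\det B|\le 1$ — this is the only use of $\sum_i\lambda_i(\mu)\le 0$. As $\mathcal O$ lies in the chain recurrent class of $\mu$, the connecting lemma for pseudo-orbits of \cite{MR2090361} then provides a $C^1$-small perturbation $g_2$ of $g_1$ having a genuine finite orbit from some $\tilde x\in U_0$ to a point of $\mathcal O$. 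Finally the local linearisation Lemma~\ref{LemExtension}, which rests on Avila's regularisation theorem \cite{MR2736152}, replaces $g_2$ by a $C^1$-close $g$ that is affine on small neighbourhoods of each point of a long orbit segment following this connecting orbit and then looping several times around $\mathcal O$, with prescribed linear parts chosen (away from the loops around $\mathcal O$) in the full-measure set to which Lemma~\ref{DerTheoPart2} applies.

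Now choose $N$ large. Since $g$ is Lipschitz and the orbit segment has bounded length, the $g_N$-orbit of $P_N(\tilde x)$ stays $O(1/N)$-close to the true orbit, so it travels down the linearised tube and reaches the part near $\mathcal O$. Along the tube, $g_N$ reads in rescaled coordinates as the discretization of the prescribed generic sequence of linear maps, so Lemma~\ref{DerTheoPart2} forces, after enough steps, the discrete orbit to merge with a periodic orbit of $g_N$ located near $\mathcal O$ and visiting its $p$ pieces — its recurrent set there cannot spread because $|\det B|\le 1$. Hence $\mu_{\tilde x}^{g_N}$ is the uniform measure on that periodic orbit, $\eta$-close to $\nu$ and therefore to $\mu$, and by local constancy the same holds for all $x$ near $\tilde x$; this is the density statement, with $g$ as $C^1$-close to $g_0$ as wished.

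The main obstacle, exactly as in the conservative case, is the bookkeeping of scales: the perturbations $g_1,g_2,g$ are fixed before $N$, which may then only be chosen large, and one must propagate $|\det B|\le 1$ and the genericity of the linear parts through all of them, while also controlling how many iterations Lemma~\ref{DerTheoPart2} requires against how long the discrete orbit can be kept inside the linearised tube. The reason this dissipative statement is easier than Theorem~\ref{TheoMesPhysDiff} is that the ergodic closing lemma, the connecting lemma and the linearisation may be invoked in their classical, non volume-preserving forms, and that $\sum_i\lambda_i(\mu)\le 0$ confines the construction to the range $|\det B|\le 1$ in which the discretized linear cocycle contracts cardinality, so the absorbing periodic orbit near $\mathcal O$ is produced directly by Lemma~\ref{DerTheoPart2}, with no extra fight against volume preservation.
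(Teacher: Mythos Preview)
Your overall architecture mirrors the paper's: reduce by Baire to a local density statement, close $\mu$ onto a periodic orbit via the ergodic closing lemma, connect the open set $U_0$ to this orbit via the connecting lemma for pseudo-orbits, linearise near the periodic orbit, and use a linear-cocycle merging argument to force the discrete orbit to land on the periodic one. This is correct in outline.

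The gap is in the merging step. You repeatedly invoke Lemma~\ref{DerTheoPart2}, but that lemma is stated and proved for sequences in $\ell^\infty(SL_n(\R))$; its proof (via model sets and the asymptotic rate of injectivity, Lemma~\ref{ConjPrincip} in the appendix) uses covolume-$1$ lattices throughout and does not transfer to $GL_n(\R)$. In the dissipative setting the differentials along the periodic orbit are not in $SL_n(\R)$, so Lemma~\ref{DerTheoPart2} simply does not apply --- neither along the loops around $\mathcal O$ nor along the transit segment where you claim to choose linear parts ``in the full-measure set to which Lemma~\ref{DerTheoPart2} applies''. Your patch of perturbing to $|\det B|\le 1$ and then asserting that ``the discretized linear cocycle contracts cardinality'' and that ``its recurrent set there cannot spread'' is the right intuition but is not a proof; nothing you wrote actually produces the nonzero integer vector that the discretized cocycle sends to $0$.

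The paper's remedy is exactly Lemma~\ref{RemplaceDissip}: when $\sum_i\lambda_i(\mu)<0$, Oseledets' theorem gives \emph{directly}, without any genericity argument on the linear parts, times $t_1,t_2,t_4$ and a vector $v\in\Z^n$ with $\|v\|\ge R_0$ such that
\[\big(\widehat{Df}_{f^{t_1+t_4}(x)}\circ\cdots\circ\widehat{Df}_{f^{t_1}(x)}\big)(v)=0,\]
together with the angle estimate of Lemma~\ref{LemAnglesOsel}. This single statement replaces both Lemma~\ref{LemAnglesOsel} and Lemma~\ref{DerTheoPart2} in the proof of Lemma~\ref{LemMesPhysDiff}, and is precisely why the dissipative case is easier: the entire hard linear-algebra appendix (rate of injectivity of generic $SL_n$ sequences tending to zero) is bypassed, because volume contraction of the cocycle forces the merging for free. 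Once you have this point $z=f^{t_1}(x)+v/N$, Steps~7 and~8 of the proof of Lemma~\ref{LemMesPhysDiff} --- landing on the stable manifold and putting the transit orbit on the grid --- go through unchanged. So your sketch becomes correct if you replace every appeal to Lemma~\ref{DerTheoPart2} by an appeal to Lemma~\ref{RemplaceDissip} and drop the unnecessary genericity perturbation of the linear parts.
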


Remark that if we also consider the inverse $f^{-1}$ of a generic diffeomorphism $f\in\Diff^1(\T^n)$, we can recover any invariant measure $\mu$ of $f$ by looking at the measures $\mu_{x}^{f_{N_k}}$ for generic points $x$ in the chain recurrent class of $\mu$.

The proof of this result is obtained by applying Lemma~\ref{RemplaceDissip} during the proof of Theorem~\ref{TheoMesPhysDiff}.
\bigskip

We also have the same statement as Theorem~\ref{TheoMesPhysDiff} but for expanding maps of the circle. We denote $\mathcal E_d^1(\Sp^1)$ the set of $C^1$-expanding maps of the circle of degree $d$.

\begin{prop}\label{TheoMesPhysDiffExp}
For a generic expanding map $f\in\mathcal E_d^1(\Sp^1)$, for any $f$-invariant probability measure $\mu$, for a generic point $x\in\Sp^1$, there exists a subsequence $(N_k)_k$ of discretizations such that
\[\mu_{x}^{f_{N_k}} \underset{k\to+\infty}{\longrightarrow} \mu.\]
\end{prop}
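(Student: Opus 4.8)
The plan is to mimic the proof of Theorem~\ref{TheoMesPhysDiff}, but to exploit the drastic simplifications offered by the expanding setting. The backbone of the argument is the same: given a target $f$-invariant measure $\mu$, one wants to build, for infinitely many $N$, a periodic pseudo-orbit of $f$ that (i) equidistributes close to $\mu$ in the sense of Birkhoff averages, (ii) attracts the $f_N$-orbit of the prescribed point $x$, and (iii) can be realized as a genuine periodic orbit of a $C^1$-perturbation $\tilde f$ of $f$ — so that, by a Baire-category argument over a countable dense family of target measures, one concludes. First I would fix a countable dense set $(\mu_\ell)_\ell$ of $f$-invariant probability measures and set up the genericity scaffold exactly as in Section~\ref{Section!}: the conclusion of the Proposition is a countable intersection of open-dense conditions indexed by $\ell$, by a rational approximation parameter $\varep$, and by a bound on the discretization order, so it suffices to produce, for each such datum, one perturbation doing the job.

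The key steps, in order. First, an \emph{ergodic closing / specification} step: for an expanding map, periodic orbits are dense and every invariant measure is weak-$*$ approximated by averages along periodic orbits (specification holds for $C^1$-expanding maps of $\Sp^1$, or alternatively one uses the same ergodic closing lemma invoked in the excerpt). So pick a periodic orbit $\mathcal O$ of $f$ whose uniform measure is $\varep$-close to $\mu_\ell$. Second, a \emph{linearization} step à la Lemma~\ref{LemExtension}: after a $C^1$-small perturbation supported away from $\mathcal O$ (using Avila's regularization to make $f$ smooth and then a local chart), arrange that $f$ is affine with constant expansion factor on a small interval $I$ placed near a point of $\mathcal O$; this is what lets the discrete linear results be applied. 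Third, the \emph{merging} step, which is where Lemma~\ref{DerTheoPart2} enters: on the grid $E_N$ (for suitable arbitrarily large $N$, chosen so that $I$ contains many grid points and so that the local map $f_N$ behaves like the discretization $\widehat A$ of the local linear part $A$), the non-injectivity of $\widehat A$ forces many grid points to have a common image; iterating, one shows that a whole neighborhood of grid points — in particular $P_N(x)$, after first connecting $x$ to the region via the connecting lemma for pseudo-orbits of \cite{MR2090361} applied to the transitive (indeed topologically mixing) map $f$ — is funneled onto the grid-orbit shadowing $\mathcal O$. Fourth, the \emph{closing the pseudo-orbit} step: the grid pseudo-orbit so produced is $C^0$-close to a true periodic pseudo-orbit of $f$, which by the connecting lemma (or directly, using expansion and shadowing) is turned into a genuine periodic orbit of a final $C^1$-small perturbation $\tilde f$, with the prescribed point $x$ in its basin for the discretization $\tilde f_N$; hence $\mu^{\tilde f_N}_x$ is $\varep$-close to $\mu_\ell$. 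Running this over all $N$ in an infinite set and over all $(\ell,\varep)$ gives the stated genericity.

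The main obstacle is the same delicate point as in the torus case: controlling the \emph{basin of attraction} of the discrete periodic orbit, i.e.\ guaranteeing that the specific grid point $P_N(x)$ (and not merely ``many'' grid points) is swept into the orbit of $\mathcal O$ under $f_N$ — this is the content that forces the passage through the linear-discretization lemma and the choice of an appropriate subsequence $(N_k)$ depending on $x$. In the expanding setting this is somewhat easier than for a general conservative diffeomorphism, because expansion means the local map is uniformly dilating and the discretization $\widehat A$ of the (one-dimensional) linear part is highly non-injective, so the contraction-of-preimages mechanism of Lemma~\ref{DerTheoPart2} is robust; moreover $f$ being topologically mixing makes the connecting step cheap. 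A secondary technical point is that $\mathcal E^1_d(\Sp^1)$ is not a conservative category, so one must check that all the perturbation lemmas used (connecting lemma, closing lemma, regularization) have versions valid for expanding maps without a volume constraint — which they do, and this is precisely why the Proposition is stated separately and why its proof is ``easier'' in the same spirit as Theorem~\ref{TheoMesPhysDiffDissip}.
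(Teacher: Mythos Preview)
Your proposal takes a genuinely different route from the paper's, and it contains a real gap. The paper's argument is a one-liner: any $f\in\mathcal E_d^1(\Sp^1)$ is topologically conjugate to the one-sided full shift on $d$ symbols, and this coding does all the work. Given $\mu$, pull it back to a shift-invariant measure and approximate it by a periodic-orbit measure $(p_0\cdots p_{T-1})^\infty$; given an open set $U$ corresponding to a cylinder $[w]$, the point with itinerary $w\cdot(p_0\cdots p_{T-1})^\infty$ lies in $U$ and its genuine $f$-orbit lands exactly on the periodic orbit after $|w|$ steps. One then perturbs $f$ in $C^1$ (via the elementary Lemma~\ref{Lem*}) so that this finite pre-periodic segment sits on $E_N$. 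No connecting lemma, no linearization, no linear discretization lemma is needed.

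The gap in your approach is the invocation of Lemma~\ref{DerTheoPart2}. That lemma is stated for sequences in $SL_n(\R)$; for an expanding map the local linear part is $x\mapsto\lambda x$ with $|\lambda|>1$, and the discretization $\widehat A:\Z\to\Z$, $k\mapsto\pi(\lambda k)$, is \emph{injective} (strictly monotone), not ``highly non-injective'' as you assert. So the merging mechanism you invoke --- local collapse of grid points under $\widehat A$ --- simply does not occur. What does produce non-injectivity of $f_N$ is the \emph{global} $d$-to-$1$ nature of $f$: every point has $d$ exact preimages, their iterated preimages are dense, and this is precisely what the coding exploits to place a true preimage of the periodic orbit inside $U$. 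Once you see this, the heavy tools you import (Lemma~\ref{LemExtension}, Avila's regularization, the connecting lemma of \cite{MR2090361} --- the latter being a statement about diffeomorphisms, not endomorphisms) become unnecessary, and the proof collapses to the symbolic argument above.
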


The proof of this statement is far easier than that of Theorem~\ref{TheoMesPhysDiff} , as it can be obtained by coding any expanding map of class $C^1$ (that is, any $f\in\mathcal E_d^1(\Sp^1)$ is homeomorphic to a full shift on a set with $d$ elements).
\bigskip

We will use the connecting lemma for pseudo-orbits (see \cite{MR2090361}), together with an ergodic closing lemma (adapted from \cite{MR2811152}) and the results of the appendix on the fact that the asymptotic rate is null (in particular Lemma~\ref{DerTheoPart2}), to prove that any invariant measure of the diffeomorphism can be observed by starting at any point of a generic subset of $\T^n$.

By Baire theorem and the fact that for a generic conservative diffeomorphism, a generic invariant measure is ergodic, non periodic and has no zero Lyapunov exponent (see Theorem~3.5 of \cite{MR2811152}), the proof of Theorem~\ref{TheoMesPhysDiff} can be reduced easily to that of the following approximation lemma.

\begin{lemme}\label{LemMesPhysDiff}
For every $f\in\Diff^1(\T^n,\Leb)$, for every $f$-invariant measure $\mu$ which is ergodic, not periodic and has no zero Lyapunov exponent, for every open subset $U\subset \T^n$, for every $C^1$-neighbourhood $\mathcal V$ of $f$, for every $\varep>0$ and every $N_0\in\N$, there exists $g\in\Diff^1(\T^n,\Leb)$ such that $g\in \mathcal V$, there exists $y\in U$ and $N\ge N_0$ such that $\dist(\mu, \mu^{g_N}_y)<\varep$. Moreover, we can suppose that this property remains true on a whole neighbourhood of $g$.
\end{lemme}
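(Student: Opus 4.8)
The plan is to construct $g$ in two steps: first produce a periodic pseudo-orbit that shadows a long piece of a $\mu$-generic orbit and then returns near $U$, close it into a genuine periodic orbit of a perturbation $g_1$ of $f$ by the connecting lemma for pseudo-orbits (and an ergodic closing lemma), and second perturb $g_1$ locally so that the discretization $g_N$, for a well-chosen large $N$, has a periodic orbit that (a) carries an empirical measure $\varep$-close to $\mu$ and (b) attracts the discrete orbit of some point $y_N$ with $y_N\in U$. First I would use the ergodic closing lemma adapted from \cite{MR2811152}: since $\mu$ is ergodic, aperiodic and has no zero Lyapunov exponent, there is $g_0\in\mathcal V$ having a periodic orbit $\mathcal O$ whose length is as large as we wish and whose empirical measure $\frac{1}{\#\mathcal O}\sum_{z\in\mathcal O}\delta_z$ is $\varep/3$-close to $\mu$; moreover, because $\mu$ is hyperbolic, along $\mathcal O$ one can realize a prescribed (hyperbolic) linear behaviour in suitable local charts, which is what Lemma~\ref{LemExtension} and Avila's regularization \cite{MR2736152} are for. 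Then I would use the connecting lemma for pseudo-orbits of \cite{MR2090361} to link a point $y\in U$ to this periodic orbit, obtaining $g\in\mathcal V$ with a point $y\in U$ whose forward $g$-orbit lands on $\mathcal O$ after finitely many steps.

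The heart of the argument is the passage from the continuous dynamics of $g$ to the discrete dynamics of $g_N$. Here I would invoke the linear discretization results of the appendix, in particular Lemma~\ref{DerTheoPart2}: after linearizing $g$ in charts along $\mathcal O$ (possible since $\mu$ has no zero exponent, so the local maps are conjugate to hyperbolic linear maps), the discretized dynamics near $\mathcal O$ is, up to rescaling, governed by a generic sequence of discretizations of linear maps, and Lemma~\ref{DerTheoPart2} guarantees that for infinitely many $N$ the discrete preimages of some grid point accumulate a large number of points. This lets me do two things at once: arrange that the grid $E_N$ meets the linearizing charts in a way that makes $g_N$ possess a periodic orbit shadowing $\mathcal O$ — hence carrying a measure $\varep/3$-close to the empirical measure of $\mathcal O$, thus $2\varep/3$-close to $\mu$ — and, by the ``merging of orbits'' that the big discrete preimages allow, funnel the discrete orbit of $y_N=P_N(y)$ into that periodic orbit, so that $\mu^{g_N}_{y}$ is exactly the measure on that periodic orbit, whence $\dist(\mu,\mu^{g_N}_y)<\varep$. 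One must also check $N\ge N_0$, which is free since Lemma~\ref{DerTheoPart2} provides infinitely many suitable $N$.

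Finally, the clause ``this property remains true on a whole neighbourhood of $g$'': all the objects involved — the periodic orbit of $g_N$, the fact that the discrete orbit of $y_N$ falls onto it, the closeness of the resulting measure to $\mu$ — depend only on finitely many values of $g$ on the finite set $E_N$, and these are locally constant conditions (the position of $P_N(g(z))$ for $z\in E_N$ is constant on a neighbourhood of $g$, away from the measure-zero set where $g(z)$ is equidistant from two grid points, which we may assume avoided). Hence the conclusion holds on a $C^1$-neighbourhood of $g$. The main obstacle I expect is the coordination in the second step: one must simultaneously control the geometry of the grid $E_N$ relative to the linearizing charts along the whole orbit $\mathcal O$, so that the hyperbolic local maps, once discretized, both admit a shadowing periodic orbit and exhibit the large-preimage phenomenon needed to capture $y_N$; reconciling these two requirements — which is exactly where the genericity of the linear sequence and Lemma~\ref{DerTheoPart2} are used — is the delicate part, and it is where most of the work of \cite{Gui15a} is being leveraged.
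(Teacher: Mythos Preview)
Your overall strategy matches the paper's --- ergodic closing lemma to produce a periodic orbit $\mathcal O$ carrying a measure close to $\mu$, connecting lemma to link $U$ to $\mathcal O$, local linearization along $\mathcal O$, and the linear discretization result (Lemma~\ref{DerTheoPart2}) to handle the discrete dynamics --- but there are two genuine gaps.

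First, the connecting lemma for pseudo-orbits does not give you a point $y\in U$ whose forward $g$-orbit \emph{lands on} $\mathcal O$ in finitely many steps. What it gives (via the generic density of stable manifolds) is a point $y\in U$ on the stable manifold of $\mathcal O$; the forward orbit of $y$ approaches $\mathcal O$ asymptotically but never reaches it. This is not a cosmetic imprecision: it is the source of the real difficulty.

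Second, and this is the heart of the matter, you have not said how the discrete orbit of $y_N$ is actually made to land on the discrete periodic orbit. You write ``funnel the discrete orbit of $y_N$ into that periodic orbit'' via ``the merging of orbits that the big discrete preimages allow,'' but this elides precisely the hard step. The problem is that once the continuous orbit of $y$ enters a $C/N$-neighbourhood of $\mathcal O$, any perturbation large enough to push the discrete orbit onto a prescribed grid point will also move $\mathcal O$ itself. The paper handles this by a careful backward construction (Steps~6--8): use Lemma~\ref{DerTheoPart2} to find a point $z$ at distance $\ge R_0/N$ from $\mathcal O$ (hence still perturbable independently of $\mathcal O$) whose discrete forward orbit merges with $\mathcal O$ after $t_4$ steps; then, working \emph{backwards} from $z$ inside the linearizing charts, use the hyperbolic splitting (Lemma~\ref{LemAnglesOsel}) to perturb so that $g^{-t_2}(z)$ lies on the local stable subspace; then slide along the stable manifold (Lemma~\ref{LemLyapPerturb}) until one reaches the prescribed point $y'=g^{T_1}(y)$; finally put the remaining finite segment from $y$ to $z$ on the grid by elementary perturbations (Lemma~\ref{Lem*}), which works because all points involved have been arranged to stay at mutual distance $\ge 2(1+\eta')/N$ and at distance $\ge (1+\eta')/N$ from $\mathcal O$. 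The delicate bookkeeping --- choosing $t_1,t_2,t_3,t_4$ and the period $\tau_1$ \emph{before} applying the closing lemma, so that all these constraints are compatible --- is essential and absent from your sketch.

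A related confusion: Lemma~\ref{DerTheoPart2} is not a statement about ``infinitely many $N$.'' It concerns a generic sequence of matrices in $SL_n(\R)$ (here, the differentials of $g$ along $\mathcal O$, after a Franks-lemma perturbation into the good open set): it provides a \emph{time} $k_0=t_4$ and translation vectors $w_k$ such that some integer point at distance $\ge R_0$ from the origin is mapped to $0$ by the composition of discretized affine maps. The discretization order $N$ is chosen afterwards (Step~5), once and for all, large enough that all local perturbations fit inside $\mathcal V$ and that the finite orbit segment from $y$ to $z$ is $2(1+\eta')/N$-separated. The ``infinitely many $N$'' in the final theorem arises only because the whole construction can be rerun with $N_0$ arbitrary.
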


First of all, we explain how to deduce Theorem~\ref{TheoMesPhysDiff} from Lemma~\ref{LemMesPhysDiff}.

\begin{proof}[Proof of Theorem~\ref{TheoMesPhysDiff}]
We consider a sequence $(\nu_\ell)_{\ell\ge 0}$ of Borel probability measures, which is dense in the whole set of probability measures. We also consider a sequence $(U_i)_{i\ge 0}$ of open subsets of $\T^n$ which spans the topology of $\T^n$. This allows us to set
\[\mathcal S_{\,N_0,k_0,\ell,i} = \left\{ f\in\Diff^1(\T^n,\Leb)\ \middle\vert\ \begin{array}{l}\exists \mu\ f\text{-inv.} : \dist(\mu,\nu_\ell)\le 1/k_0\implies\\
\exists N\ge N_0, y\in U_i : d(\mu^{f_N}_y ,\nu_\ell)<2/k_0 \end{array}\right\}.\]

We easily see that the set
\[\bigcap_{N_0,k_0,\ell,i \ge 0} \mathcal S_{\,N_0,k_0,\ell,i}\]
in contained in the set of diffeomorphisms satisfying the conclusions of the theorem.

It remains to prove that each set $\mathcal S_{\,N_0,k_0,\ell,i}$ contains an open and dense subset of $\Diff^1(\T^n,\Leb)$. Actually the interior of each set $\mathcal S_{\,N_0,k_0,\ell,i}$ is dense. This follows from the upper semi-continuity of the set of $f$-invariant measures with respect to $f$ and from the combination of Lemma~\ref{LemMesPhysDiff} with the fact that for a generic diffeomorphism, a generic invariant measure is ergodic, non periodic and has no zero Lyapunov exponent (see Theorem~3.5 of \cite{MR2811152}).
\end{proof}

It remains to prove Lemma~\ref{LemMesPhysDiff}. We now outline the main arguments of this quite long and technical proof.

\paragraph{Sketch of proof of Lemma~\ref{LemMesPhysDiff}.}
First of all, we take a point $x\in\T^n$ which is typical for the measure $\mu$. In particular, by an ergodic closing lemma derived from that of F.~Abdenur, C.~Bonatti and S.~Crovisier \cite{MR2811152} (Lemma~\ref{ErgoLemPlus}), there is a perturbation of $f$ (still denoted by $f$) so that the orbit $\omega$ of $x$ is periodic of period $\tau_1$; moreover, $\omega$ can be supposed to bear an invariant measure close to $\mu$, to have an arbitrary large length, and to have Lyapunov exponents and Lyapunov subspaces close to that of $\mu$ under $f$. Applying the (difficult) connecting lemma for pseudo-orbits of C.~Bonatti and S.~Crovisier \cite{MR2090361}, we get another perturbation of the diffeomorphism (still denoted by $f$), such that the stable manifold of $x$ under $f$ meets the open set $U$ at a point that we denote by $y$.

So, we need to perturb the diffeomorphism $f$ so that:
\begin{itemize}
\item the periodic orbit $x$ is stabilized by $f_N$. This can be easily made by a small perturbation of $f$;
\item the positive orbit of $y$ under $f_N$ falls on the periodic orbit of $x$ under $f_N$. This is the difficult part of the proof: we can apply the previous strategy to put every point of the positive orbit of $y$ on the grid only during a finite time. It becomes impossible to perform perturbations to put the orbit of $y$ on the grid --- without perturbing the orbit of $x$ --- as soon as this orbit comes into a $C/N$- neighbourhood of the orbit of $x$ (where $C$ is a constant depending on $\mathcal V$).
\end{itemize}
To solve this problem, we need the results about the linear case we prove in the appendix. if $n$ is large enough, at the scale of the grid $E_N$, the diffeomorphism $f$ is linear. Thus, iterating the discretization of $f$is equivalent to iterate a discretization of the linear cocycle given by the differentials of $f$. We can thus apply the results of the linear case (Lemma~\ref{DerTheoPart2}), which allow us to merge the positive orbits of $x$ and $y$ under the discretization.
\bigskip

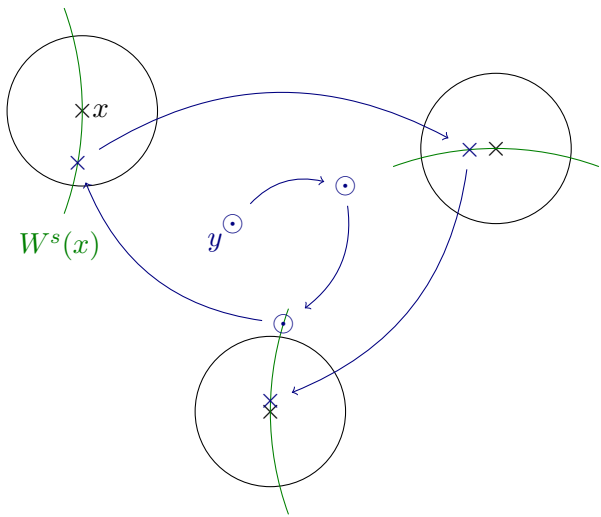
\begin{figure}[t]
\begin{center}
\begin{tikzpicture}[scale=1]

\draw (0,0) node{$\times$};
\draw (0,0) circle (1);
\draw[color=green!50!black] (0,0) arc (180:200:4);
\draw[color=green!50!black] (0,0) arc (180:160:4);

\draw (3,3.5) node{$\times$};
\draw (3,3.5) circle (1);
\draw[color=green!50!black] (3,3.5) arc (90:110:4);
\draw[color=green!50!black] (3,3.5) arc (90:70:4);

\draw (-2.5,4) node{$\times$};
\draw (-2.5,4) node[right]{$x$};
\draw (-2.5,4) circle (1);
\draw[color=green!50!black] (-2.5,4)  arc (0:20:4);
\draw[color=green!50!black] (-2.5,4)  arc (0:-20:4);
\draw[color=green!50!black] (-2.8,2.2)  node{$W^s(x)$};

\node[color=blue!50!black] (A) at (-.5,2.5) {$\odot$};
\node[color=blue!50!black] (B) at (1,3) {$\odot$};
\node[color=blue!50!black] (C) at (.175,1.17) {$\odot$};
\node[color=blue!50!black] (D) at (-2.561,3.31) {$\times$};
\node[color=blue!50!black] (E) at (2.651,3.485) {$\times$};
\node[color=blue!50!black] (F) at (0.002,0.140) {$\times$};

\draw[->,color=blue!50!black] (A) to[bend left] (B);
\draw[->,color=blue!50!black] (B) to[bend left] (C);
\draw[->,color=blue!50!black] (C) to[bend left] (D);
\draw[->,color=blue!50!black] (D) to[bend left] (E);
\draw[->,color=blue!50!black] (E) to[bend left] (F);

\draw[->,color=blue!50!black] (A) node[below left]{$y$};

\end{tikzpicture}
\caption[Perturbation of Lemma~\ref{LemMesPhysDiff}]{During the proof of Lemma~\ref{LemMesPhysDiff}, it is easy to perturb the first points of the orbit of $y$ (small disks) until the orbit meets the neighbourhoods of the orbits of $x$ where the diffeomorphism is linear (inside of the circles). The difficulty of the proof is to make appropriate perturbations in these small neighbourhoods.}\label{DessinGlobPhys}
\end{center}
\end{figure}

In more detail, we use Lemma~\ref{LemExtension} to linearize locally the diffeomorphism in the neighbourhood of the periodic orbit of $\omega$. In particular, the positive orbit of $y$ eventually belongs to this linearizing neighbourhood, from a time $T_1$. We denote $y' = f^{T_1}(y)$. To summarize, the periodic orbit $\omega$ bears a measure close to $\mu$, its Lyapunov exponents are close to that of $\mu$, its Lyapunov linear subspaces are close to that of $\mu$ (maybe not all along the periodic orbit, but at least for the first iterates of $x$). The diffeomorphism $f$ is linear around each point of $\omega$. Finally, the stable manifold of $\omega$ meets $U$ at $y$, and the positive orbit of $y$ is included in the neighbourhood of $\omega$ where $f$ is linear from the point $y' = f^{T_1}(y)$.

We then choose an integer $N$ large enough, and perturb the orbit of $x$ such that it is stabilized by the discretization $f_N$. We want to make another perturbation of $f$ such that the backward orbit of $x$ by $f_N$ also contains $y'$ (recall that $f_N$ is not necessarily one-to-one). This is done by a perturbation supported in the neighbourhood of
$\omega$ where $f$ is linear. First of all, during a time $t_4\ge 0$, we apply Lemma~\ref{DerTheoPart2} to find a point $z$ in the neighbourhood of $f^{-t_4}(x)$ where $f$ is linear, but far enough from $f^{-t_4}(x)$ compared to $1/N$, such that the $t_4$-th image of $z$ by the discretization $f_N$ is equal to $x$. Next, we perturb the orbit of $z$ under $f^{-1}$ during a time $t_3\ge 0$ such that $f^{-t_3}(z)$ belongs to the stable subspace of $f^{-t_4-t_3}(x)$. Note that the support of this perturbation must be disjoint from $\omega$; this is the reason why $z$ must be ``far enough from $x$''. Finally, we find another time $t_2$ such that the negative orbit $\{f^{-t}(z')\}_{t\ge 0}$ of $z' = f^{-t_3-t_2}(z)$ has an hyperbolic behaviour. We then perturb each point of the negative orbit of $z'$ (within the stable manifold of $\omega$), so that it contains an arbitrary point of the stable manifold of $\omega$, far enough from $\omega$. This allows us to meet the point $y'$, provided 
that the order of discretizations $N$ is large enough.

To complete the proof, we we consider the segment of $f$-orbit joining $y$ to $z$; we perturb each one of these points to put them on the grid $E_N$ (with a perturbation whose supports size is proportional to $1/N$).

Notice that we shall have chosen carefully the parameters of the first perturbations in order to make this final perturbation possible. Also, remark that the length of the periodic orbit $\omega$ must be very large compared to the times $t_2$, $t_3$ and $t_4$. This is why we will perform the proof in the opposite direction : we will begin by choosing the times $t_i$ and make the perturbation of the dynamics afterwards.

Note that the Addendum~\ref{AddTheoMesPhysDiff} can be proved by using a small variation on Lemma~\ref{LemMesPhysDiff}, that we will explain at the end of Section~\ref{AvDerSec}.

\section{Discretizations of sequences of linear maps}\label{SecLin}

We begin by the study of the corresponding linear case, corresponding to the ``local behaviour'' of $C^1$ maps. We first define the linear counterpart of the discretization.

\begin{definition}\label{DefDiscrLin}
The map $P : \R\to\Z$\index{$P$} is defined as a projection from $\R$ onto $\Z$. More precisely, for $x\in\R$, $P(x)$ is the unique\footnote{Remark that the choice of where the inequality is strict and where it is not is arbitrary.} integer $k\in\Z$ such that $k-1/2 < x \le k + 1/2$. This projection induces the map\index{$\pi$}
\[\begin{array}{rrcl}
\pi : & \R^n & \longmapsto & \Z^n\\
 & (x_i)_{1\le i\le n} & \longmapsto & \big(P(x_i)\big)_{1\le i\le n}
\end{array}\]
which is an Euclidean projection on the lattice $\Z^n$. Let $A\in M_n(\R)$. We denote by $\widehat A$ the \emph{discretization}\index{$\widehat A$} of the linear map $A$, defined by 
\[\begin{array}{rrcl}
\widehat A : & \Z^n & \longrightarrow & \Z^n\\
 & x & \longmapsto & \pi(Ax).
\end{array}\]
\end{definition}

This definition allows us to define the rate of injectivity for sequences of linear maps.

\begin{definition}\label{DefTaux}
Let $A_1,\cdots,A_k \in GL_n(\R)$. The \emph{rate of injectivity} of $A_1,\cdots,A_k$ is the quantity\footnote{By definition, $B_R = B_\infty(0,R)\cap \Z^n$.}\footnote{In the sequel we will see that the $\limsup$ is in fact a limit.}\index{$\tau^k$}
\[\tau^k(A_1,\cdots,A_k) = \limsup_{R\to +\infty} \frac{\card \big((\widehat{A_k}\circ\cdots\circ\widehat{A_1})(\Z^n) \cap B_R\big)}{\card \big( \Z^n \cap B_R \big)}\in]0,1],\]
and for an infinite sequence $(A_k)_{k\ge 1}$ of invertible matrices, as the previous quantity is decreasing in $k$, we can define the \emph{asymptotic rate of injectivity}\index{$\tau^\infty$}
\[\tau^\infty\big((A_k)_{k\ge 1}\big) = \lim_{k\to +\infty}\tau^k(A_1,\cdots,A_k)\in[0,1].\]
\end{definition}

Finally, we define a topology on the set of sequences of linear maps.

\begin{definition}\label{DefTopoSL}
We fix once for all a norm $\|\cdot\|$ on $M_n(\R)$. For a bounded sequence $(A_k)_{k\ge 1}$ of matrices of $SL_n(\R)$, we set\index{$\|(A_k)_k\|$}
\[\|(A_k)_k\|_\infty = \sup_{k\ge 1} \|A_k\|.\]
In other words, we consider the space $\ell^\infty(SL_n(\R))$ of uniformly bounded sequences of matrices of determinant 1, endowed with the metric $\|\cdot\|_\infty$.
\end{definition}

We can now state the result we are interested in.

\begin{lemme}\label{DerTheoPart2}
For every $R_0>0$ and $\delta>0$, there exists $k_0\in\N$ such that the set
$\mathcal O_\varep^{k_0}$ of sequences $\{(A_k)_{k\ge 1}\in\ell^\infty(SL_n(\R))$ such that there exists a sequence $(w_k)_{k\ge 1}$ of translation vectors belonging to $[-1/2,1/2]^n$, and a vector $\widetilde y_0\in\Z^n$, with norm bigger than $R_0$, such that ($\pi(A+w)$\index{$\pi(A+w)$} denotes the discretization of the affine map $A+w$)
\[\big(\pi(A_{k_0} + w_{k_0})\circ \cdots \circ \pi(A_1 + w_1)\big)(\widetilde y_0) = \big(\widehat{A_{k_0} + w_{k_0}}\circ \cdots \circ \widehat{A_1 + w_1}\big)(0) =0.\]
Moreover, the point $\widetilde y_0$ being fixed, this property can be supposed to remain true on a whole neighbourhood of the sequence $(A_k)_{k\ge 1}\in\mathcal O_\varep^{k_0}$.
\end{lemme}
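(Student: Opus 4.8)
In the form in which the lemma is used in the Baire category arguments of the paper, what has to be shown is that, for $k_0$ large enough, $\mathcal O_\varep^{k_0}$ is open and $\delta$-dense. Openness will be automatic once the collapsing configuration is built with a little room to spare, so the heart of the matter is to produce, within distance $\delta$ of an arbitrary sequence, a sequence realizing the collapse. The one non-elementary input will be the vanishing of the asymptotic rate of injectivity of a generic sequence, proved in the appendix (and in \cite{Gui15a}): for every $\eta>0$ and $\delta>0$ there is $k_1\in\N$ such that the set of $(A_k)_{k\ge1}$ with $\tau^{k_1}(A_1,\dots,A_{k_1})<\eta$ is $\delta$-dense. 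This is where all the geometry of the linear theory lives --- a generic linear map, applied to the current image set and followed by rounding, overlaps it with a translate of itself, so that the density $\tau^j$ can be made to drop by a fixed proportion infinitely often --- and it is the step I expect to be the real obstacle. Everything after it is a counting argument plus some bookkeeping on the translation vectors.

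Granting the rate statement, I would fix $R_0$ and $\delta$, pick $\eta=\eta(R_0,n)$ small enough, take the corresponding $k_1$ and a sequence $(A_k)$ with $\tau^{k_1}(A_1,\dots,A_{k_1})<\eta$, and set $\Phi=\widehat{A_{k_1}}\circ\cdots\circ\widehat{A_1}$ and $L=A_{k_1}\cdots A_1$. The map $\Phi$ differs from $x\mapsto Lx$ by a bounded amount, and crucially $\det L=1$. Hence for $S$ large the ellipsoid $L^{-1}(B_S)$ has all its semi-axes larger than $1$ and contains at least $c_nS^n$ lattice points (for a dimensional constant $c_n>0$), all of which $\Phi$ maps into $B_{2S}$; on the other hand, by the definition of $\tau^{k_1}$, the set $\Phi(\Z^n)$ meets $B_{2S}$ in fewer than $\eta\,\card(\Z^n\cap B_{2S})$ points once $S$ is large. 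Comparing these two counts forces some fibre of $\Phi$ to contain at least $1/(C_n\eta)$ points, $C_n$ a dimensional constant; choosing $\eta$ so that $1/(C_n\eta)>(R_0+1)^n$, such a fibre cannot fit in a cube of side $R_0$, so it contains $a\ne b$ with $\|a-b\|>R_0$ and $\Phi(a)=\Phi(b)=:p$.

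It then remains to move this collision onto the origin by adjusting the translations. Let $a=a_0,\dots,a_{k_1}=p$ and $b=b_0,\dots,b_{k_1}=p$ be the orbits of $a$ and $b$ under $\widehat{A_1},\dots,\widehat{A_{k_1}}$, and put $\widetilde y_0:=b-a$, so that $\widetilde y_0\in\Z^n$ and $\|\widetilde y_0\|>R_0$. I would choose $w_i\in(-1/2,1/2)^n$ inductively so that the orbit of $d_0:=\widetilde y_0$ under the affine discretizations $\widehat{A_i+w_i}=\pi(A_i\,\cdot\,+w_i)$ is exactly $d_i=b_i-a_i$. This is possible because, if $d_{i-1}=b_{i-1}-a_{i-1}$, then
\[(b_i-a_i)-A_id_{i-1}=\big(\pi(A_ib_{i-1})-A_ib_{i-1}\big)-\big(\pi(A_ia_{i-1})-A_ia_{i-1}\big)\in(-1,1)^n,\]
so that $b_i-a_i$ is an integer point lying in the open unit $\ell^\infty$-cube centred at $A_id_{i-1}$, and hence $\pi(A_id_{i-1}+w_i)=b_i-a_i$ for $w_i$ in a non-empty open subset of $(-1/2,1/2)^n$. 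With such a choice one gets $d_{k_1}=p-p=0$, while $\widehat{A_i+w_i}(0)=\pi(w_i)=0$ because $w_i\in(-1/2,1/2)^n$; so, with $k_0:=k_1$, both $\widetilde y_0$ and $0$ are sent to $0$ by $\pi(A_{k_0}+w_{k_0})\circ\cdots\circ\pi(A_1+w_1)$, which is the desired equality.

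For the ``moreover'' clause, taking each $w_i$ in the \emph{interior} of its admissible set makes $A_id_{i-1}+w_i$ lie in the open cube $(b_i-a_i)+(-1/2,1/2)^n$; since $\widetilde y_0$ and the $d_i$ are fixed lattice points, the relations $\pi(A_i'd_{i-1}+w_i)=d_i$ and $\pi(w_i)=0$ then persist when $(A_k)$ is replaced by any sufficiently close sequence $(A_k')$, so $\mathcal O_\varep^{k_0}$ is open. As it contains the $\delta$-dense set $\{(A_k)_{k\ge1}:\tau^{k_1}(A_1,\dots,A_{k_1})<\eta\}$, it is $\delta$-dense; and $k_0=k_1$ depends only on $R_0$ and $\delta$ through $\eta$.
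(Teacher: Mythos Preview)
Your proof is correct and follows essentially the same route as the paper's: both reduce the lemma to the vanishing of the asymptotic rate of injectivity (Lemma~\ref{ConjPrincip}), deduce the existence of a large fibre of $\widehat{A_{k_0}}\circ\cdots\circ\widehat{A_1}$ by a pigeonhole count, and then use the translation vectors $w_k$ to move the collision onto the origin.

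The only noteworthy difference is in the translation step. The paper picks one orbit $y_0\to y_1\to\cdots\to y_{k_0}=x_0$ landing at the large-fibre point $x_0$ and sets (in effect) $w_k$ to be the rounding error $A_ky_{k-1}-y_k$; with this single choice, \emph{every} orbit $z_0\to\cdots\to z_{k_0}=x_0$ satisfies $\pi(A_k(z_{k-1}-y_{k-1})+w_k)=z_k-y_k$, so the entire fibre is transported to lie over $0$, and one then selects a point of norm $>R_0$ in it. You instead fix two specific preimages $a,b$ with $\|b-a\|>R_0$, set $\widetilde y_0=b-a$, and choose the $w_i$ inductively to track the difference orbit $d_i=b_i-a_i$. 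Your construction is more explicit and makes the openness clause transparent (each $w_i$ lies in a nonempty open set of admissible translations), while the paper's version has the conceptual advantage of moving the whole large fibre at once; but the two are really the same mechanism, and your volume/ellipsoid count is just a fleshed-out version of the paper's one-line observation that a fibre of cardinality $\ge 1/\varepsilon$ must exist.
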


This lemma will be deduced from the following one.

\begin{lemme}\label{ConjPrincip}
For a generic sequence of matrices $(A_k)_{k\ge 1}$ of $\ell^\infty(SL_n(\R))$, we have
\[\tau^\infty\big( (A_k)_{k\ge 1}\big) = 0.\]
Moreover, for every $\varep>0$, the set of $(A_k)_{k\ge 1}\in\ell^\infty(SL_n(\R))$ such that  $\tau^\infty\big( (A_k)_{k\ge 1}\big) <\varep$ contains an open and dense subset of $\ell^\infty(SL_n(\R))$.
\end{lemme}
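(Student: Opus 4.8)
The plan is to prove the second assertion — that $\{(A_k)_k:\tau^\infty((A_k)_k)<\varepsilon\}$ contains an open dense subset of $\ell^\infty(SL_n(\R))$ for every $\varepsilon>0$ — from which the genericity of $\{\tau^\infty=0\}=\bigcap_{m\ge 1}\{\tau^\infty<1/m\}$ follows, the space $\ell^\infty(SL_n(\R))$ being complete and hence Baire. Since $\tau^\infty=\inf_k\tau^k(A_1,\dots,A_k)$, it is enough to perturb any given sequence $(A_k)_k$ by less than any prescribed $\delta>0$, modifying only finitely many terms $A_1,\dots,A_{N_0}$, so that $\tau^{N_0}(A'_1,\dots,A'_{N_0})<\varepsilon$; since the finitely many open combinatorial conditions entering the construction below persist on a neighbourhood, so does the bound $\tau^{N_0}<\varepsilon$, which makes the resulting set open, and it is visibly dense. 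This finite perturbation I would build block by block from the following one-step decrease: there exist $c(\rho,n)>0$ and $m(\rho,\delta,n)\in\N$ such that, given $A_1,\dots,A_k$ with $\rho=\tau^k(A_1,\dots,A_k)>0$ and any bounded $A_{k+1},\dots,A_{k+m}$ with $m\ge m(\rho,\delta,n)$, one can choose $A'_{k+j}$ with $\|A'_{k+j}-A_{k+j}\|<\delta$ so that $\tau^{k+m}(A_1,\dots,A_k,A'_{k+1},\dots,A'_{k+m})\le\rho-c(\rho,n)$. Iterating this on successive blocks of the sequence and using that $c(\cdot,n)$ is bounded below on $[\varepsilon,1]$, the rate falls below $\varepsilon$ after finitely many blocks, and $N_0$ is their total length.

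\textbf{The one-step decrease.} Write $E=(\widehat{A_k}\circ\cdots\circ\widehat{A_1})(\Z^n)$, a set of density $\rho$ that is fixed throughout the step; the goal is to fold it onto a set of density at most $\rho-c(\rho,n)$. For $B\in SL_n(\R)$, a lattice point $p$ belongs to $\widehat B(E)$ iff the cell $B^{-1}(p+C)$, where $C$ is the unit cell of $\pi$, meets $E$; as these cells tile $\R^n$ and carry $\card(E)$ points of $E$ in total, one gets in the density limit
\[\operatorname{dens}\big(\widehat B(E)\big)\le\rho-\operatorname{dens}\big\{p\in\Z^n : \card\big(E\cap B^{-1}(p+C)\big)\ge 2\big\}.\]
A cell contains two distinct points $x\ne y$ of $E$ precisely when $v=x-y\ne 0$ satisfies $\|Bv\|_\infty<1$; by Minkowski's theorem applied to the covolume-one lattice $B\Z^n$ there are only finitely many such $v$, and at least one as soon as $B$ contracts some direction slightly. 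The plan for the $m$ available slots is then: use the first ones to put $E$ in general position with respect to the relevant lattices — this is where the equidistribution properties of discretizations of generic sequences of linear maps enter, ensuring $\card\big(E\cap(E-v)\big)\gtrsim\rho^2\,\card$ for an admissible short vector $v$ — and use one slot to contract $B$ slightly along $v$ (changing $B$ by a factor $1+O(\delta)$, hence within the allowed size). This forces a positive density of colliding pairs inside $E$, hence the drop.

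\textbf{The main obstacle.} The delicate point is precisely the control of the combinatorial structure of the iterated image $E=(\widehat{A_k}\circ\cdots\circ\widehat{A_1})(\Z^n)$: one must guarantee that a small perturbation creates collisions inside $E$ itself — not merely inside $\Z^n$ — at a rate bounded below in terms of $\rho$ and $n$ alone, uniformly over all admissible $A_1,\dots,A_k$. This rests on the equidistribution and averaging estimates for discretizations of generic sequences of matrices — the same analysis that shows the $\limsup$ in Definition~\ref{DefTaux} is a genuine limit and that $E$ is statistically homogeneous at all scales — which are carried out in \cite{Gui15a}. Granting these, the one-step decrease, the block iteration, and the Baire argument above assemble into the lemma.
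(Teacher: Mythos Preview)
Your Baire reduction is fine and matches the paper's overall scheme. The substantive difference is in how you try to force the rate to drop, and here your argument has a genuine gap.

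Your plan is to work directly with the discrete image set $E=(\widehat{A_k}\circ\cdots\circ\widehat{A_1})(\Z^n)\subset\Z^n$ and to create collisions under one further discretized map by contracting along a short vector $v$. The crux, as you yourself flag, is the second-moment estimate $\operatorname{dens}\big(E\cap(E-v)\big)\gtrsim\rho^2$ for some admissible $v$. You do not prove this; you defer it to ``the equidistribution and averaging estimates \dots carried out in \cite{Gui15a}''. But \cite{Gui15a} is precisely the paper in which the present lemma is proved, so this is circular: the statement you are invoking is not a prior input but part of what has to be established. Without that estimate, nothing prevents $E$ from being, say, concentrated along a sublattice in a way that makes $E\cap(E-v)$ have density zero for every $v$ with $\|Bv\|_\infty<1$ achievable by a $\delta$-small perturbation of $B$; your argument gives no mechanism to rule this out.

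The paper avoids this obstacle by not working with $E$ at all. It reformulates $\tau^k$ as the continuous density $D_c(W^k+\widetilde\Lambda_k)$ of a union of unit cubes centred on a lattice in $\R^{nk}$ (Proposition~\ref{CalculTauxModel}), which is manifestly continuous and piecewise polynomial in the matrix entries. The decrease is then obtained by a pigeonhole argument on translates of this cube-union (Lemmas~\ref{EstimTauxN} and~\ref{LemDeFin}): if the density exceeds $1/\ell$, one cannot place $\ell$ disjoint translates, and a generic next matrix forces either a drop in density or a drop in the number of admissible translates; after at most $\ell-1$ steps the density must fall. The ``equidistribution'' you need is replaced by a single, easily checked irrationality condition ensuring that certain $\Z^n$-orbits equidistribute on a torus. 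If you want to salvage your direct approach, you would need to prove the autocorrelation bound for $E$ from scratch --- which, in practice, leads one back to the model-set description the paper uses.
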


\begin{rem}
The second part of this statement is easily deduced from the first by applying the continuity of $\tau^k$ on a generic subset (Remark~\ref{conttaukk}).
\end{rem}

\begin{rem}
The same statement holds for generic sequences of isometries; this leads to nice applications to image processing (see \cite{Gui15b}).
\end{rem}

To prove Lemma~\ref{ConjPrincip}, we take advantage of the rational independence between the matrices of a generic sequence to obtain geometric formulas for the computation of the rate of injectivity. The tool used to do that is the formalism of \emph{model sets}\footnote{Also called \emph{cut-and-project} sets.} (see for example \cite{Moody25} or \cite{MR2876415} for surveys about model sets, see also \cite{Gui15d} for the application to the specific case of discretizations of linear maps). Lemma~\ref{ConjPrincip} is proved in \cite{Gui15a}. However, we have chosen to include a condensed proof in appendix for the sake of completeness. For more details and comments about the linear case, see also \cite{Guih-These}.\label{model}

We now explain how to deduce Lemma~\ref{DerTheoPart2} from Lemma~\ref{ConjPrincip}.

\begin{proof}[Proof of Lemma~\ref{DerTheoPart2}]
We set
\[ \mathcal O_\varep^k = \{(A_k)_{k\ge 1}\in\ell^\infty(SL_n(\R)) \mid \tau^k(A_1,\cdots,A_k) <\varep\}.\]
Lemma~\ref{ConjPrincip} states that for every $\varep>0$, the set $\bigcup_{k\ge 0} \mathcal O_\varep^k$ contains an open and dense subset of $\ell^\infty(SL_n(\R))$. Together with the continuity of $\tau^k$ at every generic sequence (Remark~\ref{conttaukk}), this implies that for every $\delta>0$, there exists $k_0>0$ such that $\mathcal O_\varep^{k_0}$ contains an open and $\delta$-dense subset of $\ell^\infty(SL_n(\R))$.

Then, if $\tau^{k_0}(A_1,\cdots,A_k) <\varep$, then there exists a point $x_0\in\Z^n$ such that
\[\card\big((A_{k_0}\circ\cdots\circ A_1)^{-1}(x_0)\big) \ge \frac{1}{\varep}\]
(and moreover if the sequence $(A_k)_{k\ge 1}$ is generic, then this property remains true on a whole neighbourhood of the sequence). The lemma follows from this statement by remarking that on the one hand, if we choose $w_k\in [-1/2,1/2]^n$ such that
\[w_k = A_k^{-1}\Big(\big(\widehat{A_{k_0}}\circ \cdots \circ \widehat{A_{k-1}}\big)^{-1}(x_0)\Big) \mod \Z^n,\]
then the properties of the cardinality of the inverse image of $x_0$ are transferred to the point 0, and that on the other hand, for every $R_0>0$, there exists $m\in\N$ such that every subset of $\Z^n$ with cardinality bigger than $m$ contains at least one point with norm bigger than $R_0$.
\end{proof}

\section{An improved ergodic closing lemma}

The proof of Theorem~\ref{TheoMesPhysDiff} begins by the approximation of any invariant measure $\mu$ of any conservative $C^1$-diffeomorphism by a periodic measure of a diffeomorphism $g$ close to $f$. This is done by R.~Mañé's ergodic closing lemma, but we will need the fact that the obtained periodic measure inherits some of the properties of the measure $\mu$. More precisely, given a $C^1$-diffeomorphism $f$, we will have to approach any non periodic ergodic measure of $f$ with nonzero Lyapunov exponent by a periodic measure of a diffeomorphism $g$ close to $f$, such that the Lyapunov exponents and the Lyapunov subspaces of the measure are close to that of $f$ by $\mu$. We will obtain this result by modifying slightly the proof of a lemma obtained by F.~Abdenur, C.~Bonatti and S.~Crovisier in \cite{MR2811152} (Proposition~6.1).

\begin{lemme}[Ergodic closing lemma]\label{ErgoLemPlus}
Let $f\in\Diff^1(\T^n,\Leb)$. We consider
\begin{itemize}
\item a number $\varep>0$;
\item a $C^1$-neighbourhood $\mathcal V$ of $f$;
\item a time $\tau_0\in\N$;
\item an ergodic measure $\mu$ without zero Lyapunov exponent;
\item a point $x\in \T^n$ which is typical for $\mu$ (see the beginning of the paragraph 6.1 of \cite{MR2811152});
\end{itemize}
moreover, we denote by $\lambda$ the smallest absolute value of the Lyapunov exponents of $\mu$, by $F^f_x$ the stable subspace at $x$ and by $G_x^f$ the unstable subspace\footnote{Stable and unstable in the sense of Oseledets splitting.} at $x$.
Then, there exists a diffeomorphism $g\in\Diff^1(\T^n,\Leb)$ and a time $\widetilde{t_0}>0$ (depending only in $f$, $\mu$ and $x$) such that:
\begin{enumerate}
\item $g\in\mathcal V$;
\item the point $x$ is periodic for $g$ of period $\tau\ge \tau_0$;
\item for any $t\le\tau$, we have $d\big(f^t(x),g^t(x)\big)<\varep$;
\item $x$ has no zero Lyapunov exponent for $g$ and the smallest absolute value of the Lyapunov exponents of $x$ is bigger than $\lambda/2$, we denote by $F^g_x$ the stable subspace and $G^g_x$ the unstable subspace;
\item the angles between $F^f_x$ and $F^g_x$, and between $G^f_x$ and $G^g_x$, are smaller than $\varep$;
\item for any $t\ge \widetilde{t_0}$, for any vectors of unit norm $v_F\in F^g_x$ and $v_G\in G^g_x$, we have
\[\frac{1}{t}\log \big(\|Dg^{-t}_x (v_F)\|\big)\ge \frac{\lambda}{4} \qquad \text{and} \qquad \frac{1}{t}\log \big(\|Dg^{t}_x (v_G)\|\big)\ge \frac{\lambda}{4}\]
\end{enumerate}
\end{lemme}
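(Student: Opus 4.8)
The plan is to follow the proof of Proposition~6.1 of \cite{MR2811152} of F.~Abdenur, C.~Bonatti and S.~Crovisier essentially verbatim, tracking the extra quantitative information on Oseledets data that we need, and then to add a final observation giving the uniform Lyapunov estimate~(6). First I would recall Ma\~n\'e's selecting/closing scheme as refined in \cite{MR2811152}: since $\mu$ is ergodic with no zero Lyapunov exponent and $x$ is $\mu$-typical, for arbitrarily large $T$ the orbit segment $x,f(x),\dots,f^T(x)$ is ``well adapted'' (the finite-time Oseledets splitting along it is dominated, the finite-time Lyapunov exponents are within $\lambda/10$ of those of $\mu$, and the angles between the finite-time stable/unstable spaces along the segment and the genuine Oseledets spaces $F^f_{f^t(x)},G^f_{f^t(x)}$ are arbitrarily small, for $t$ in a set of density close to $1$ and in particular for the first and last iterates). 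One then applies Ma\~n\'e's perturbation to close up this segment into a periodic orbit of period $\tau=T+O(1)$ for a diffeomorphism $g$ that is $C^1$-close to $f$, conservative (the construction in \cite{MR2811152} is carried out in $\Diff^1(M,\Leb)$, which is what we need), and which coincides with $f$ outside a small tube around the segment, so that the itinerary of $x$ is $\varep$-shadowed, giving~(1),~(2),~(3). Because the perturbation is $C^1$-small and localized, the derivative cocycle of $g$ along the periodic orbit is $C^0$-close to that of $f$ along the segment; hence the periodic orbit carries an invariant measure $\nu$ close to $\mu$, its Lyapunov exponents are close to those of $\mu$ (so in particular the smallest one exceeds $\lambda/2$), and its stable/unstable bundles $F^g_x,G^g_x$ are close to the finite-time ones and therefore to $F^f_x,G^f_x$: this is~(4) and~(5).

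It remains to produce~(6), the genuinely new point: a uniform (not merely asymptotic) exponential estimate $\frac1t\log\|Dg^{-t}_x(v_F)\|\ge\lambda/4$ and $\frac1t\log\|Dg^{t}_x(v_G)\|\ge\lambda/4$ valid for all $t\ge\widetilde{t_0}$ and all unit vectors $v_F\in F^g_x$, $v_G\in G^g_x$. Here I would use that $x$ is periodic for $g$ of period $\tau$ with the splitting $F^g_x\oplus G^g_x$ invariant, and that the Lyapunov exponents along $F^g_x$ (resp.\ $G^g_x$) are $\le-\lambda/2$ (resp.\ $\ge\lambda/2$) by~(4). For a periodic orbit the Lyapunov exponents of a sub-bundle are exactly the averages of $\log$ of the singular values over one period, so $\frac1\tau\log\|Dg^{\tau}_{x|G^g_x}(v)\|\to$ a quantity $\ge\lambda/2$ uniformly in $v$; writing an arbitrary time $t=q\tau+r$ with $0\le r<\tau$, the contribution of the $q$ full periods is at least $q\tau\cdot(\lambda/2 - o(1))$ while the bounded remainder $r<\tau$ only costs a fixed additive constant $C(\|Dg^{\pm1}\|_\infty,\tau)$; hence for $t$ large enough (i.e.\ $t\ge\widetilde{t_0}$, where $\widetilde{t_0}$ depends only on $\lambda$ and on a bound for $\|Dg^{\pm1}\|$ coming from $\mathcal V$, hence ultimately only on $f,\mu,x$) the average exceeds $\lambda/4$, uniformly over unit vectors. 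The same argument applied to $Dg^{-1}$ on $F^g_x$ gives the stable estimate. One subtlety: $\|Dg^{nt}_x(v)\|$ for a single vector need not be monotone, so I would phrase this via the norm of the restricted linear map $Dg^t_{x|G^g_x}$ and the co-norm, using that along a periodic orbit these are submultiplicative up to a factor controlled by the (bounded) angle between $F^g_x$ and $G^g_x$ along the orbit — this angle is bounded below because the splitting is dominated, a property inherited from the adapted segment.

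The main obstacle, and the only place where real care is needed beyond quoting \cite{MR2811152}, is making sure that all of~(4),~(5),~(6) can be achieved \emph{simultaneously} with the same perturbation, and that the constant $\widetilde t_0$ and the period lower bound $\tau_0$ do not conflict. The point is that in \cite{MR2811152} one is free to take the shadowed segment as long as one wishes (so $\tau\ge\tau_0$ is free), that taking it longer only \emph{improves} the closeness of the finite-time data to the Oseledets data of $\mu$ (helping~(4),~(5)), and that~(6) is a soft consequence of periodicity plus a lower bound on the smallest exponent and on the angle, none of which degrade as $\tau$ grows. So the strategy is: first fix how good an approximation of $\mu$, of the exponents, and of the Oseledets spaces we want (driven by $\varep$ and by the $\lambda/2,\lambda/4$ thresholds); this fixes how long a $\mu$-typical segment we must select; then run the Abdenur--Bonatti--Crovisier closing construction on that segment; then read off~(6) from the resulting periodic orbit by the elementary period-splitting estimate above. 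The openness of the conclusion (``this property remains true on a whole neighbourhood of $g$'') is then immediate, since~(1)--(6) are all expressed by finitely many strict inequalities on $g$ and finitely many derivatives of $g$ along a finite orbit, hence $C^1$-open.
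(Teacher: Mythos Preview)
Your proposal has a genuine gap, and it is precisely the gap that motivates the paper's modification of the Abdenur--Bonatti--Crovisier argument. You treat conclusion~(5) --- closeness of \emph{both} $F^g_x$ to $F^f_x$ and $G^g_x$ to $G^f_x$ --- as a soft consequence of the fact that the derivative cocycle of $g$ along the periodic orbit is $C^0$-close to that of $f$ along the shadowed segment. This is not justified: closeness of a long product of matrices does not by itself imply closeness of the invariant splittings; one needs a cone-field (domination) argument, and the cone fields built in the original proof of Proposition~6.1 of \cite{MR2811152} are \emph{asymmetric}. There, the unstable space $G^g_x$ is captured by a fixed-point argument inside a cone $C^s_{j,4C}$ that can be made arbitrarily thin (so $G^g_x$ is forced close to $G^f_x$), but the stable space $F^g_x$ is only known to lie in the complementary cone $C^u_{j,4C}$, which is arbitrarily \emph{thick}. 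So the unmodified ABC proof yields~(5) only for the unstable direction. Thus the ``genuinely new point'' is not~(6), as you suggest, but the stable half of~(5).

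The paper's fix is to go back into Lemma~6.2 and Claim~6.4 of \cite{MR2811152} and replace the single sequence of near-identity orthogonal conjugacies $P_n$ by \emph{two} sequences $P_n,Q_n$, so that for the perturbed cocycle one controls simultaneously the inclinations of $Df_n^{t_n}E_{i,j}$ and of $Df_n^{-t_n}E_{i,j}$ with respect to $E_{i,j}$. The key technical input is the elementary linear-algebra Lemma~\ref{Claim6.4}: for any $A\in GL_n(\R)$ and any subspace $E$, one can find $P,Q\in O_n(\R)$ close to the identity such that both $(PAQ)(E)$ and $(PAQ)^{-1}(E)$ have inclination at most $C$ with respect to $E$. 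With this two-sided control, both invariant cones become thin, both $F^g_x$ and $G^g_x$ are trapped near $F^f_x,G^f_x$, and the rest of the ABC proof goes through to give (1)--(6).

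A secondary issue: your argument for~(6), writing $t=q\tau+r$ and absorbing the remainder as an additive constant, produces a threshold $\widetilde{t_0}$ that depends on the period $\tau$ (through the cost $C(\|Dg^{\pm 1}\|_\infty,\tau)$ of the partial period). The statement requires $\widetilde{t_0}$ to depend only on $f,\mu,x$, hence to be fixed \emph{before} $\tau$ is chosen, while $\tau$ may be taken arbitrarily large. The correct route is again via the uniform cone/domination estimates along the adapted segment, which give growth from a fixed Pesin-type time for the $\mu$-typical point $x$ that does not see $\tau$.
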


Remark that the proof of Proposition~6.1 of \cite{MR2811152} yields a similar lemma but with the weaker conclusion
\emph{\begin{itemize}
\item[5.] ``the angle between $G^f_x$ and $G^g_x$, is smaller than $\varep$''.
\end{itemize}}
\noindent Indeed, the authors obtain the linear space $G^g_x$ by a fixed point argument: Lemma~6.5 of \cite{MR2811152} states that the cone $C^s_{j,4C}$ is invariant by $Df_n^{-t_n}$, and thus contains both $G^f_x$ and $G^g_x$. Taking $C$ as big as desired, the cone $C^s_{j,4C}$ is as thin as desired and thus the angle between $G^f_x$ and $G^g_x$, is as small as desired. Unfortunately, in the original proof of Proposition~6.1 of \cite{MR2811152}, the linear space $F^g_x$ is not defined in the same way ; it is an invariant subspace which belongs to $C^u_{j,4C}$, which is an arbitrarily thick cone. Thus, the angle between $F^f_x$ and $F^g_x$, is not bounded by this method of proof. Our goal here is to modify the proof of Proposition~6.1 of \cite{MR2811152} to have simultaneously two thin cones $C'^u_{j,4C}$ and $C^s_{j,4C}$ which are invariant under respectively $Df_n^{t_n}$ and $Df_n^{-t_n}$

We begin by modifying the Lemma~6.2 of \cite{MR2811152}: we replace its forth point
\begin{itemize}
\item \emph{a sequence of linear isometries $P_n\in O_d(\R)$ such that $\|P_n - \Id\|< \varep$,}
\end{itemize}
by the point
\begin{itemize}
\item \emph{two sequences of linear isometries $P_n,Q_n\in O_d(\R)$ such that $\|P_n - \Id\|< \varep$ and $\|Q_n - \Id\|< \varep$,}
\end{itemize}
and its forth conclusion
\begin{itemize}
\item[\emph{d)}] \emph{For every $i\le j\in\{1,\cdots,k\}$ the inclination\footnote{The \emph{inclination} of a linear subspace $E\subset\R^n$ with respect to another subspace $E'\subset\R^n$ with the same dimension is the minimal norm of the linear maps $f : E\to E^\perp$ whose graph are equal to $E$.} of $Df_n^{t_n}. E_{i,j}$ with respect to $E_{i,j}$ is less than $C$.}
\end{itemize}
by the conclusion
\begin{itemize}
\item[\emph{d)}] \emph{For every $i\le j\in\{1,\cdots,k\}$ the inclination of $Df_n^{t_n}. E_{i,j}$ with respect to $E_{i,j}$ is less than $C$, and the inclination of $Df_n^{-t_n}. E_{i,j}$ with respect to $E_{i,j}$ is less than $C$.}
\end{itemize}

These replacements in the lemma are directly obtained by replacing Claim~6.4 of \cite{MR2811152} by the following lemma.

\begin{lemme}\label{Claim6.4}
For any $\eta>0$, there exists a constant $C>0$ such that for any matrix $A\in GL_n(\R)$ and any linear subspace $E\subset\R^n$, there exists two orthogonal matrices $P,Q\in O_n(\R)$ satisfying $\|P-\Id\|<\eta$ and $\|Q-\Id\|<\eta$, such that the inclinations of $(PAQ)(E)$ and $(PAQ)^{-1}(E)$ with respect to $E$ are smaller than $C$.
\end{lemme}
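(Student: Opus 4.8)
The plan is to prove the lemma by a measure-theoretic (Fubini) argument on the pair $(P,Q)$, the point being to apply each perturbing rotation \emph{last}, so that it acts on the Grassmannian by an isometry and no metric distortion occurs. First I would reformulate: write $k=\dim E$ (we may assume $0<k<n$, the cases $k\in\{0,n\}$ being trivial). The inclination of a $k$-dimensional subspace $F$ with respect to $E$ is finite if and only if $F\cap E^\perp=\{0\}$; set $\mathrm{Bad}_C=\{F\in\mathrm{Gr}(k,n):\operatorname{incl}(F,E)>C\}$. A short compactness argument with graphs of linear maps of norm $\le C$ shows $\mathrm{Bad}_C$ is open, and $\bigcap_{C>0}\mathrm{Bad}_C=\{F:F\cap E^\perp\neq\{0\}\}$ is a proper algebraic subvariety of the compact manifold $\mathrm{Gr}(k,n)$, hence has zero mass for the unique $O_n(\R)$-invariant probability measure $\mu$ on $\mathrm{Gr}(k,n)$. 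Thus $\mu(\mathrm{Bad}_C)\to 0$ as $C\to+\infty$, and by invariance $\mu(\mathrm{Bad}_C)$ depends only on $n,k,C$, so this is uniform in $E$. Since $PAQ(E)=P\cdot(AQ(E))$ and $(PAQ)^{-1}(E)=Q^{-1}\cdot\big(A^{-1}P^{-1}(E)\big)$, the goal becomes: for $C$ large enough (depending only on $\eta,n$), for every $A,E$ there is $(P,Q)$ with $\|P-\Id\|<\eta$, $\|Q-\Id\|<\eta$, and both $PAQ(E)$ and $(PAQ)^{-1}(E)$ outside $\mathrm{Bad}_C$.

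The key estimate is a ``no distortion'' bound. Normalize the Haar measure of $O_n(\R)$ to be a probability, and let $V=\{R\in O_n(\R):\|R-\Id\|<\eta\}$, so $\mathrm{Haar}(V)>0$. The orbit map $q:O_n(\R)\to\mathrm{Gr}(k,n)$, $R\mapsto R(E_0)$ for a fixed $E_0$, pushes $\mathrm{Haar}$ forward to $\mu$; hence for every Borel $B$ and every $W\subset O_n(\R)$, $\mathrm{Haar}\big(W\cap q^{-1}(B)\big)\le\mathrm{Haar}\big(q^{-1}(B)\big)=\mu(B)$. Writing an arbitrary $F_0\in\mathrm{Gr}(k,n)$ as $g_0(E_0)$ with $g_0\in O_n(\R)$, and applying this with $W=Vg_0$ (bi-invariance of Haar), yields, uniformly in $F_0$,
\[\mathrm{Haar}\{R\in V:\ R(F_0)\in\mathrm{Bad}_C\}\ \le\ \mu(\mathrm{Bad}_C),\]
and, since inversion preserves Haar, also $\mathrm{Haar}\{R\in V:\ R^{-1}(F_0)\in\mathrm{Bad}_C\}\le\mu(\mathrm{Bad}_C)$.

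Then I would conclude by Fubini. Choose $C=C(\eta,n)$ so large that $\mu(\mathrm{Bad}_C)<\mathrm{Haar}(V)/4$ (using the uniformity in $E$, and taking the maximum over $k\le n$ if one wants a single $C$ for all dimensions). Fix $A$ and $E$, and work on $V\times V$ with coordinates $(P,Q)$ and the product Haar measure. Let $\mathcal A=\{(P,Q):PAQ(E)\in\mathrm{Bad}_C\}$ and $\mathcal B=\{(P,Q):(PAQ)^{-1}(E)\in\mathrm{Bad}_C\}$. For each fixed $Q$ the $P$-slice of $\mathcal A$ is $\{P\in V:P(AQ(E))\in\mathrm{Bad}_C\}$, of measure $<\mathrm{Haar}(V)/4$ by the key estimate with $F_0=AQ(E)$; for each fixed $P$ the $Q$-slice of $\mathcal B$ is $\{Q\in V:Q^{-1}(A^{-1}P^{-1}(E))\in\mathrm{Bad}_C\}$, of measure $<\mathrm{Haar}(V)/4$. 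By Fubini, $\mathcal A$ and $\mathcal B$ each have product measure $<\mathrm{Haar}(V)^2/4$, so $\mathcal A\cup\mathcal B$ does not fill $V\times V$; any $(P,Q)$ in the complement satisfies $PAQ(E)\notin\mathrm{Bad}_C$ and $(PAQ)^{-1}(E)\notin\mathrm{Bad}_C$, i.e. both inclinations are $\le C$, with $\|P-\Id\|<\eta$, $\|Q-\Id\|<\eta$.

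The only genuine obstacle is the distortion of the Grassmannian by $A$: one cannot bound the probability that ``$PAQ(E)$ is bad'' by pushing $\mu$ forward through the Grassmannian automorphism induced by $A$, because $A$ may be arbitrarily ill-conditioned and can inflate a tiny bad set into a large one. This is circumvented precisely by parametrizing each of the two conditions by the rotation applied last ($P$ for the forward map, $Q^{-1}$ for the inverse map), which acts as an isometry, and by assigning the two conditions to the two independent coordinates so that Fubini closes the argument. The remaining ingredients — openness and measurability of $\mathrm{Bad}_C$, the handling of strict versus non-strict inequalities, and the elementary identification of the inclination with the graph description — are routine.
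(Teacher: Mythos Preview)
Your proof is correct and takes a genuinely different route from the paper's. The paper argues constructively: it first fixes a single rotation $P_0$ close to $\Id$ with the dichotomy property that for every subspace $E'$, at least one of $E'$ and $P_0(E')$ has inclination at most $C$ with respect to $E$; then it chooses $Q$ (depending on $A$, $P_0$, $E$) so that both $Q^{-1}A^{-1}(E)$ and $Q^{-1}A^{-1}P_0^{-1}(E)$ have small inclination, and finally sets $P=\Id$ if $(AQ)(E)$ already has small inclination and $P=P_0$ otherwise. Your approach instead runs a Haar--Fubini argument on $V\times V$: the forward condition is governed by $P$, which acts last and hence isometrically on the Grassmannian, while the backward condition is governed by $Q^{-1}$, which also acts last; since these are independent coordinates, each bad set has product measure below $\mathrm{Haar}(V)^2/4$ and their union cannot cover $V\times V$. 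Your argument is cleaner in that it bypasses the existence of the dichotomy rotation $P_0$ (which the paper asserts without detail) and makes the uniformity in $E$ and in $\dim E$ transparent via the $O_n(\R)$-invariance of $\mu$; the paper's argument is more explicit, using only two candidate values of $P$ and no measure theory on the $P$ side. Both proofs circumvent the same genuine obstruction you identify --- that $A$ may distort the Grassmannian arbitrarily badly --- by arranging that in each of the two inclination conditions the relevant orthogonal factor is applied \emph{after} $A$.
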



\begin{proof}[Proof of Lemma~\ref{Claim6.4}]
Given $\eta>0$, there exists a constant $C>0$ and a matrix $P_0\in O_n(\R)$ such that $\|P_0-\Id\|<\eta$, satisfying: for any linear subspace $E'\subset \R^n$, one of the two inclinations of $E'$ and of $P_0(E')$ with respect to $E$ is smaller then $C$.

We then choose an orthogonal matrix $Q\in O_n(\R)$ such that $\|Q-\Id\|<\eta$ and that (taking a bigger $C$ if necessary) both inclinations of $Q^{-1}\big(A^{-1}(E)\big)$ and $Q^{-1}\big((A^{-1} P_0^{-1})(E)\big)$ with respect to $E$ are smaller than $C$. There are two cases: either the inclination of $(AQ)(E)$ with respect to $E$ is smaller than $C$, and in this case we choose $P=\Id$, or the inclination of $(AQ)(E)$ with respect to $E$ is bigger than $C$, and in this case we can choose $P=P_0$. In both cases, the lemma is proved.
\end{proof}

The rest of the proof of Lemma~\ref{ErgoLemPlus} can be easily adapted from the proof of Proposition~6.1 of \cite{MR2811152}.

\section[Proof of the perturbation lemma]{Proof of the perturbation lemma (Lemma~\ref{LemMesPhysDiff})}\label{AvDerSec}

We now come to the proof of Lemma~\ref{LemMesPhysDiff}. We first do this proof in dimension 2, to simplify some arguments and to be able to make pictures.

\begin{proof}[Proof of Lemma~\ref{LemMesPhysDiff}]
Let $f$ be a conservative $C^1$-diffeomorphism, $\mathcal V$ a $C^1$-neighbourhood of $f$, $\varep>0$ and $N_0\in\N$. We denote $M = \max\big(\|Df\|_\infty, \|Df^{-1}\|_\infty\big)$. We also choose an $f$-invariant measure $\mu$ which is ergodic, not periodic and has no zero Lyapunov exponent, and an open set $U\subset \T^2$. We will make several successive approximations of $f$ in $\mathcal V$; during the proof we will need to decompose this neighbourhood: we choose $\delta>0$ such that the open $\delta$-interior $\mathcal V'$ of $\mathcal V$ is non-empty.

\paragraph{Step 0: elementary perturbation lemmas.}
During the proof of Lemma~\ref{LemMesPhysDiff}, we will use three different elementary perturbation lemmas.

The first one is the elementary perturbation lemma in $C^1$ topology.

\begin{lemme}[Elementary perturbation lemma in $C^1$ topology]\label{PerturbElem}
For every diffeomorphism $f\in\Diff^1(\T^n,\Leb)$ and every $\delta>0$, there exists $\eta>0$ and $r_0>0$ such that the following property holds: for every $x,y\in \T^n$ such that $d(x,y)<r_0$, there exists a diffeomorphism $g\in\Diff^1(\T^n,\Leb)$ satisfying $d_{C^1}(f,g)<\delta$, such that $g(x) = f(y)$ and that $f$ and $g$ are equal out of the ball $B\big(\frac{x+y}{2},\frac{1+\eta}{2}d(x,y)\big)$.
\end{lemme}

This lemma allows to perturb locally the orbit of a diffeomorphism; a proof of it can be found for example in \cite[Proposition~5.1.1]{MR1662930}. 

The second one is an easy corollary of the first one. We will use it to perturb a segment of orbit such that for any $N$ large enough, each point of this segment of orbit belongs to the grid $E_N$.

\begin{lemme}[Perturbation of a point such that it belongs to the grid]\label{Lem*}
For every open set $\mathcal V'$ of $\Diff^1(\T^n,\Leb)$, there exists $\eta'>0$ such that for $N$ large enough an every $x\in\T^n$, there exists $g\in\Diff^1(\T^n,\Leb)$ such that
\begin{itemize}
\item $g\in\mathcal V'$;
\item $g(x_N) = \big( f(x)\big)_N$;
\item $f=g$ outside of $B\big(x,(1+\eta')/N\big)$.
\end{itemize} 
\end{lemme}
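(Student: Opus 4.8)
The plan is to deduce Lemma~\ref{Lem*} directly from the elementary $C^1$ perturbation lemma (Lemma~\ref{PerturbElem}) by checking that, for $N$ large, the displacement one needs to impose on a single orbit point in order to land it on the grid is so small that Lemma~\ref{PerturbElem} applies with a support contained in a ball of radius $(1+\eta')/N$ around $x$. The point is purely quantitative: projecting onto $E_N$ moves a point by at most $\frac{\sqrt n}{2N}$ (half the diameter of a grid cell), so the discrepancy we must correct is of order $1/N$, which goes to $0$ as $N\to\infty$.

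First I would fix the open set $\mathcal V'$ and pick $\delta>0$ small enough that the $\delta$-ball around $f$ in $\Diff^1(\T^n,\Leb)$ is contained in $\mathcal V'$. Apply Lemma~\ref{PerturbElem} to $f$ and this $\delta$ to obtain the constants $\eta>0$ and $r_0>0$ it provides: whenever $d(x',y')<r_0$ there is $g\in\Diff^1(\T^n,\Leb)$ with $d_{C^1}(f,g)<\delta$, $g(x')=f(y')$, and $f=g$ outside $B\bigl(\frac{x'+y'}{2},\frac{1+\eta}{2}d(x',y')\bigr)$. Now given $x\in\T^n$, set $x'=x_N=P_N(x)$ and look for $y'$ with $f(y')=(f(x))_N=P_N(f(x))$, i.e. $y' = f^{-1}\bigl(P_N(f(x))\bigr)$. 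Since $P_N$ moves points by at most $\frac{\sqrt n}{2N}$ and $f^{-1}$ is $M$-Lipschitz (with $M=\max(\|Df\|_\infty,\|Df^{-1}\|_\infty)$, say), we get
\[
d(y',x) = d\bigl(f^{-1}(P_N(f(x))),\,f^{-1}(f(x))\bigr) \le M\,\frac{\sqrt n}{2N},
\]
and also $d(x',x)=d(x_N,x)\le \frac{\sqrt n}{2N}$, so $d(x',y')\le (M+1)\frac{\sqrt n}{2N}$. Hence for $N$ large enough (depending only on $f$, $\mathcal V'$) we have $d(x',y')<r_0$ and Lemma~\ref{PerturbElem} produces $g\in\mathcal V'$ with $g(x_N)=g(x')=f(y')=(f(x))_N$ as required.

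It remains to control the support. The perturbation is supported in $B\bigl(\frac{x'+y'}{2},\frac{1+\eta}{2}d(x',y')\bigr)$; since $d(x,x')\le\frac{\sqrt n}{2N}$ and $d(x,y')\le M\frac{\sqrt n}{2N}$, the center $\frac{x'+y'}{2}$ lies within $\frac{(M+1)\sqrt n}{4N}$ of $x$ and the radius is at most $\frac{(1+\eta)(M+1)\sqrt n}{4N}$, so the whole ball is contained in $B\bigl(x,\frac{C(\eta,M,n)}{N}\bigr)$ for an explicit constant $C(\eta,M,n)$. Choosing $\eta'$ with $1+\eta' \ge C(\eta,M,n)$ gives the third bullet, $f=g$ outside $B(x,(1+\eta')/N)$. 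This $\eta'$ depends only on $\mathcal V'$ (through $\delta$, hence $\eta$) and on $f$, as claimed.

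I do not expect any real obstacle here — the statement is a soft corollary and the only thing to be slightly careful about is bookkeeping the constants so that $\eta'$ genuinely depends only on the allowed data and the radius estimate is uniform in $x$; the uniformity is automatic because Lemma~\ref{PerturbElem}'s constants $\eta,r_0$ do not depend on the pair of points, only on $f$ and $\delta$. (One should also note that the notation $x_N$ in the statement means $P_N(x)$, consistently with the convention fixed earlier in the paper, so that ``$g(x_N)=(f(x))_N$'' is exactly ``$g(P_N(x)) = P_N(f(x))$''.)
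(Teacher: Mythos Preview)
Your proof is correct and is exactly the kind of argument the paper has in mind: the paper does not spell out a proof of Lemma~\ref{Lem*} but simply calls it ``an easy corollary'' of the elementary perturbation lemma (Lemma~\ref{PerturbElem}), and your derivation --- applying Lemma~\ref{PerturbElem} with $x'=x_N$ and $y'=f^{-1}\bigl(P_N(f(x))\bigr)$ and then bounding the support radius by a constant times $1/N$ --- is precisely that corollary made explicit. The bookkeeping of constants is fine and the uniformity in $x$ is clear.
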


Applying this lemma to several points $x_i\in \T^n$ which are far enough one from the others (for $i\neq j$, $d(x_i,x_j)\ge 2(1+\eta')/N$), it is possible to perturb $f$ into a diffeomorphism $g$ such that for every $i$, $g\big((x_i)_N\big) = \big( f(x_i)\big)_N$.

These two perturbations will be applied locally.
\bigskip

The third perturbation lemma is an improvement of Lemma~\ref{PerturbElem}; it states that the perturbation can be supposed to be a translation in a small neighbourhood of the perturbed point.

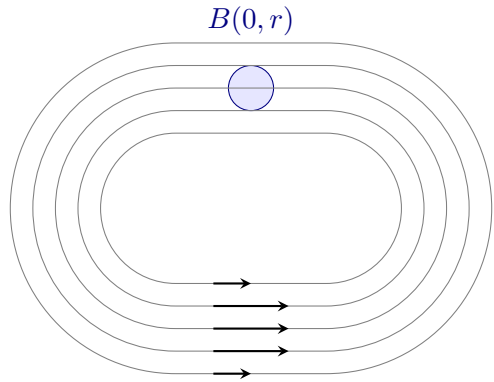
\begin{figure}[t]
\begin{center}
\begin{tikzpicture}[scale=1]

\fill[color=blue!10!white] (0,1.6) circle (.3);
\draw[color=blue!50!black] (0,1.6) circle (.3);
\draw[color=blue!50!black] (0,2.5) node{$B(0,r)$};

\draw[color=gray] (1,-1) arc (-90:90:1) -- (-1,1) arc (90:270:1) -- cycle;
\draw[color=gray] (1,-1.3) arc (-90:90:1.3) -- (-1,1.3) arc (90:270:1.3) -- cycle;
\draw[color=gray][color=gray] (1,-1.6) arc (-90:90:1.6) -- (-1,1.6) arc (90:270:1.6) -- cycle;
\draw[color=gray] (1,-1.9) arc (-90:90:1.9) -- (-1,1.9) arc (90:270:1.9) -- cycle;
\draw[color=gray] (1,-2.2) arc (-90:90:2.2) -- (-1,2.2) arc (90:270:2.2) -- cycle;
\draw[->,>=stealth,thick] (-.5,-1) -- (0,-1);
\draw[->,>=stealth,thick] (-.5,-1.3) -- (.5,-1.3);
\draw[->,>=stealth,thick] (-.5,-1.6) -- (.5,-1.6);
\draw[->,>=stealth,thick] (-.5,-1.9) -- (.5,-1.9);
\draw[->,>=stealth,thick] (-.5,-2.2) -- (0,-2.2);

\end{tikzpicture}
\caption[Proof of Lemma~\ref{LocTrans}]{Flow of the Hamiltonian used to prove Lemma~\ref{LocTrans} (``staduim'').}\label{FigStade}
\end{center}
\end{figure}

\begin{lemme}[Elementary perturbation with local translation]\label{LocTrans}
For every open set $\mathcal V'$ of $\Diff^1(\T^n,\Leb)$, and every $r>0$, there exists $N_1>0$ such that for every $N\ge N_1$ and every $\|v\|_\infty \le 1/(2N)$, there exists $g\in\Diff^1(\R^n,\Leb)$ such that:
\begin{itemize}
\item $g\in\mathcal V'$;
\item $\operatorname{Supp}(g)\subset B(0,10\,r)$;
\item for every $x\in B(0,r)$, $g(x) = x+v$.
\end{itemize}
\end{lemme}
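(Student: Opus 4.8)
The plan is to realize $g$ as the time-one map of the flow of a smooth, compactly supported, divergence-free vector field $X$ on $\R^n$ which coincides \emph{exactly} with the constant field $v$ on a ball slightly larger than $B(0,r)$, and whose $C^1$-norm is $O(\|v\|_\infty/r)$. Since $\|v\|_\infty\le 1/(2N)$ while $r$ is fixed, this norm --- hence $d_{C^1}(g,\Id)$ --- goes to $0$ as $N\to+\infty$, which is what forces $g\in\mathcal V'$ for $N\ge N_1$, with $N_1$ depending only on $r$, on $n$ and on $\mathcal V'$. (Here the role of $\mathcal V'$ is to prescribe how close to the identity $g$ must be; concretely, one fixes $\delta>0$ and produces $g$ with $d_{C^1}(g,\Id)<\delta$.)

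In dimension $2$ I would take for $X$ a Hamiltonian vector field --- the ``stadium'' of Figure~\ref{FigStade}. After a rotation (which is volume-preserving and fixes balls centred at $0$) assume $v=(a,0)$ with $a=\|v\|_\infty$. Fix once and for all a smooth function $\psi:\R^2\to[0,1]$ with $\psi\equiv1$ on $B(0,2r)$, $\psi\equiv0$ outside $B(0,5r)$, $\|D\psi\|_\infty\le C_1/r$ and $\|D^2\psi\|_\infty\le C_1/r^2$; set $H(x_1,x_2)=a\,\psi(x_1,x_2)\,x_2$ and $X=(\partial_{x_2}H,-\partial_{x_1}H)$. By construction $X$ is divergence-free; $X\equiv(a,0)=v$ on $B(0,2r)$ (where $\psi\equiv1$); $X\equiv0$ outside $B(0,5r)$; and the product rule gives $\|X\|_\infty\le C_2\,a$ and $\|DX\|_\infty\le C_2\,a/r$ with $C_2$ universal, the dominant term being $D^2\psi$ times the linear factor $x_2$, of size $(C_1/r^2)(5r)a$.

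Let $g$ be the time-one map of $X$. Being the flow of a divergence-free field, $g\in\Diff^1(\R^2,\Leb)$, and its support is contained in that of $X$, hence in $B(0,10r)$. If $x\in B(0,r)$, then $t\mapsto x+tv$ stays in $B(0,r+a)\subset B(0,2r)$ for $t\in[0,1]$ as soon as $N_1>1/(2r)$; since $X$ is exactly $v$ there, uniqueness of solutions of ODEs gives $g(x)=x+v$. Finally the Gronwall estimates for the flow of $X$ yield $d_{C^1}(g,\Id)\le C_3\,a\,(1+1/r)\le C_3(1+1/r)/(2N)$, so enlarging $N_1$ (now in terms of $r$ and $\mathcal V'$) gives $g\in\mathcal V'$. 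For $n\ge3$ I would reduce to the plane: write the coordinates as $(x_1,x_2,x'')$ with $x''\in\R^{n-2}$, fix a smooth $\phi:\R^{n-2}\to[0,1]$ equal to $1$ near $0$ and compactly supported, and set $X(x_1,x_2,x'')=\phi(x'')\,X_H(x_1,x_2)$, where $X_H$ is the planar field above, extended by $0$ along $x''$. This $X$ is still divergence-free --- the only extra term in $\operatorname{div}X$ is $X_H\cdot\nabla_{x''}\phi$, which vanishes since $X_H$ has no $x''$-component --- it equals $v$ near $B(0,r)$, is supported in $B(0,10r)$ provided the supports of $\psi$ and $\phi$ are taken small enough, and still has $C^1$-norm $O(a/r)$; its time-one map is the desired $g$.

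The only genuine difficulty I anticipate is the tension between the three ``exact'' constraints --- $g$ must be \emph{exactly} the translation by $v$ on $B(0,r)$, \emph{exactly} supported in $B(0,10r)$, and \emph{exactly} volume-preserving. The Hamiltonian (equivalently, vector-potential) formulation is precisely what resolves it: performing the cut-off at the level of $H$ keeps $\operatorname{div}X=0$ automatically, while $X$ remains literally constant wherever $\psi\equiv1$; and the elementary remark that a trajectory of displacement $a\le1/(2N)$ starting inside $B(0,r)$ cannot exit $B(0,2r)$ in unit time takes care of the exactness on $B(0,r)$. Everything else --- the $C^1$-smallness, the choice of $N_1$ --- is routine bookkeeping with constants, harmless because $r$ is fixed while $\|v\|_\infty\to0$.
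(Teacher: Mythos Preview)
Your proof is correct and follows essentially the same approach as the paper: the time-one map of a compactly supported Hamiltonian vector field in dimension $2$ (your $H=a\,\psi\,x_2$ is an explicit realization of the ``stadium'' Hamiltonian the paper only indicates via Figure~\ref{FigStade}), extended to higher dimensions by a leafwise construction with a transverse cutoff --- exactly what the paper sketches in its footnote, since the time-$1$ map of $\phi(x'')X_H$ equals the time-$\phi(x'')$ map of $X_H$ leaf by leaf. You have simply filled in the details the paper omits.
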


\begin{proof}[Proof of Lemma~\ref{LocTrans}]
Take an appropriate Hamiltonian, see Figure~\ref{FigStade}.
\end{proof}

\paragraph{Step 1: choice of the starting point $x$ of the orbit.} Let $\lambda$ be the smallest absolute value of the Lyapunov exponents of $\mu$ (in particular, $\lambda > 0$).

We choose a point $x$ which is regular for the measure $\mu$: we suppose that it satisfies the conclusions of Oseledets and Birkhoff theorems, and Mañé's ergodic closing lemma (see Paragraph~6.1 of \cite{MR2811152}). We denote by $F_x^f$ the stable subspace and $G_x^f$ the unstable subspace for the Oseledets splitting at the point $x$. By Oseledets theorem, the growth of the angles $\angle\big(F_{f^i(x)}^f,G_{f^i(x)}^f\big)$ between the stable and unstable subspaces is subexponential (in both positive and negative times).

\paragraph{Step 2: choice of the parameters we use to apply the ergodic closing lemma.} In this second step, we determine the time during which we need an estimation of the angle between the stable and unstable subspaces of $f$ and its perturbations, and the minimal length of the approximating periodic orbit.

We first use the ``hyperbolic-like'' behaviour of $f$ near the orbit of $x$: for well chosen times $t_1$ and $t_2$, each vector which is not too close to $G_{f^{t_1}(x)}^f$ is mapped by $Df^{-t_2}$ into a vector which is close to $F_{f^{t_1-t_2}(x)}^f$. Given a vector $v\in T\T^n_{f^{t_1}(x)}$, it will allow us to perturb $f$ into $g$ such that an iterate of $v$ under $Dg^{-1}$ belongs to $F_{f^{t_1-t_2}(x)}^f$.

\begin{lemme}\label{LemAnglesOsel}
For every $\alpha>0$, there exists two times $t_1$ and $t_2 \ge 0$ such that if $v\in T \T^n_{f^{t_1}(x)}$ is such that the angle between $v$ and $G_{f^{t_1}(x)}$ is bigger than $\alpha$, then the angle between $Df^{-t_2}_{f^{t_1}(x)} v$ and $F_{f^{t_1 - t_2}(x)}$ is smaller than $\alpha$ (see Figure~\ref{FigRotaGlobC1}).
\end{lemme}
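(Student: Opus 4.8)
The plan is to exploit the Oseledets splitting at the regular point $x$ chosen in Step~1, where the angles $\angle\big(F_{f^i(x)}^f,G_{f^i(x)}^f\big)$ grow only subexponentially while the contraction rates along $F$ and the expansion rates along $G$ are genuinely exponential (with rate at least $\lambda-\text{const}\cdot\varep$ on a large time window, say, by Oseledets). First I would fix $\alpha>0$ and work at the point $p=f^{t_1}(x)$ for a parameter $t_1$ to be chosen. Decompose the given vector $v\in T\T^n_p$ with $\angle(v,G_p)\ge\alpha$ along the Oseledets splitting $T\T^n_p = F_p\oplus G_p$, writing $v = v_F + v_G$. The hypothesis $\angle(v,G_p)\ge\alpha$ forces $\|v_F\|\ge c(\alpha)\|v\|$ for a constant $c(\alpha)>0$ depending only on $\alpha$ (and on a uniform bound for the angle between $F$ and $G$, which is where $t_1$ enters — more on that below). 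Now apply $Df^{-t_2}_p$: since the splitting is $Df$-invariant, $Df^{-t_2}_p v = Df^{-t_2}_p v_F + Df^{-t_2}_p v_G$, the first term lies in $F_{f^{t_1-t_2}(x)}$ and the second in $G_{f^{t_1-t_2}(x)}$.

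The key estimate is then a comparison of norms: $\|Df^{-t_2}_p v_F\|$ grows like $e^{(\lambda/2)t_2}\|v_F\|$ (backward iteration expands the stable direction), while $\|Df^{-t_2}_p v_G\|$ decays like $e^{-(\lambda/2)t_2}\|v_G\|$ (backward iteration contracts the unstable direction) — here I use the Oseledets/Lyapunov estimates valid for $t_2$ large enough, uniformly in $t_1$ up to the subexponential angle corrections. Hence the ratio $\|Df^{-t_2}_p v_G\| / \|Df^{-t_2}_p v_F\|$ is bounded by $e^{-\lambda t_2}\cdot\|v_G\|/\|v_F\| \le e^{-\lambda t_2}\cdot C(\alpha)$, which becomes smaller than $\tan\alpha$ (equivalently, forces $\angle(Df^{-t_2}_p v, F_{f^{t_1-t_2}(x)})<\alpha$) as soon as $t_2$ is chosen large enough depending on $\alpha$. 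This gives the statement modulo the choice of $t_1$.

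The main obstacle — and the reason $t_1$ appears at all — is that all these Oseledets estimates degrade near time $0$: the ``constants'' in the norm comparisons and in the lower bound $\|v_F\|\ge c(\alpha)\|v\|$ actually depend on the Oseledets regularity function, which is only tempered (subexponential), not uniform. To get clean exponential estimates with fixed rate $\lambda/2$ over a window of length $t_2$, one needs to start from an iterate $f^{t_1}(x)$ that is ``deep enough'' into the regular orbit, i.e. $t_1$ large enough that the tempered corrections over the relevant window $[t_1-t_2,t_1]$ are absorbed into the loss from $\lambda$ to $\lambda/2$. So I would choose $t_2$ first (as a function of $\alpha$ and $\lambda$, to beat $e^{-\lambda t_2}C(\alpha)<\tan\alpha$), and then choose $t_1\ge t_2$ large enough that the Oseledets estimates hold with rate $\lambda/2$ on $[t_1-t_2,t_1]$ and the angle $\angle(F,G)$ stays bounded below there; this is exactly the order of quantifiers announced in the sketch (``we will begin by choosing the times $t_i$''). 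The remaining work is the routine bookkeeping of these norm inequalities, which I would not grind through here.
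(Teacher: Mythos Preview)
Your proposal is correct and follows essentially the same route as the paper. The paper's own proof is a single sentence: it invokes Oseledets' theorem and singles out the key fact that $\exp(t\lambda)/\angle(F_{f^t(x)},G_{f^t(x)})\to+\infty$ as $t\to+\infty$; your decomposition $v=v_F+v_G$ together with the exponential norm comparison and the handling of the subexponential (tempered) corrections is exactly how one unpacks that sentence. One small remark: the bound $\|v_G\|/\|v_F\|\le C(\alpha)$ coming from the hypothesis $\angle(v,G_p)\ge\alpha$ does \emph{not} in fact depend on the angle $\angle(F_p,G_p)$ (a short computation gives $\|v_G\|/\|v_F\|\le 1+1/\sin\alpha$), so the role of $t_1$ is solely to absorb the tempered constants in the Oseledets estimates, exactly as you say in your last paragraph.
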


\begin{proof}[Proof of Lemma~\ref{LemAnglesOsel}]
It easily follows from Oseledets theorem, and more precisely from the fact that the function $\exp(t\lambda) / \angle(F_{f^t(x)},G_{f^t(x)})$ goes to $+\infty$ when $t$ goes to $+\infty$.
\end{proof}

So, we fix two times $t_1$ and $t_2 \ge 0$, obtained by applying Lemma~\ref{LemAnglesOsel} to $\alpha = \arcsin\big(1/(1+\eta)\big)$, where $\eta$ is the parameter obtained by applying the elementary perturbation lemma (Lemma~\ref{PerturbElem}) to $\delta/2$ (see Figure~\ref{FigRotaC1}).

We also choose a time $t_3\ge \widetilde t_0$ ($\widetilde t_0$ being given by Lemma~\ref{ErgoLemPlus}) such that
\begin{equation*}
e^{\lambda (t_3+t_2)/4} \ge M^{t_2}.
\end{equation*}
This estimation will be applied to point 6. of Lemma~\ref{ErgoLemPlus}. It will imply that for every $v\in F_{f^{t_1}(x)}^f$ and for every $t\ge t_2+t_3$, we have
\begin{equation}\label{Pastropgros}
\|Df_{f^{t_1}(x)}^{-t}(v)\| \ge \|Df_{f^{t_1}(x)}^{-t_2}(v)\| \ge \frac{1}{M^{t_2}} \|v\|.
\end{equation}
We then apply Lemma~\ref{DerTheoPart2} to
\begin{equation}\label{DefRr0}
R_0 = M^{t_2+t_3}(1+\eta'),
\end{equation}
where $\eta'$ is given by Lemma~\ref{Lem*} applied to the parameter $\delta/2$. This gives us a parameter $k_0 = t_4$. Note that $R_0$ is chosen so that if $v\in T_{f^{t_1}(x)}\T^n$ is such that $\|v\|\ge R_0/N$, then for any $t\in \llbracket 0, t_2+t_3\rrbracket$, we have
\begin{equation}\label{EqPasIdee}
\big\|Df^{-t}_{f^{t_1}(x)}(v)\big\| \ge (1+\eta')/N.
\end{equation}
Thus, we will be able to apply Lemma~\ref{Lem*} to the points $f^{-t}\big(f^{t_1}(x) + v\big)$, with $t\in \llbracket 0, t_2+t_3\rrbracket$, without perturbing the points of the orbit of $x$.

\paragraph{Step 3: global perturbation of the dynamics.} We can now apply the ergodic closing lemma we have stated in the previous section (Lemma~\ref{ErgoLemPlus}) to the neighbourhood $\mathcal V'$, the measure $\mu$, the point $x_1 = f^{t_1-t_2-t_3}(x)$ and $\tau_0 \ge t_2+t_3+t_4$ large enough so that $\tau_0 \lambda/4\ge 3$. We also need that the expansion of vectors $F^{g_1}$ along the segment of orbit $\big(x_2,g_2(x_2),\cdots,g_2^{\tau_0-t_2-t_3-t_4}(x_2)\big)$ is bigger than 3, but it can be supposed true by taking a bigger $\tau_0$ if necessary. This gives us a first perturbation $g_1$ of the diffeomorphism $f$, such that the point $x_1$ is periodic under $g_1$ with period $\tau_1\ge\tau_0$, and such that the Lyapunov exponents of $x_1$ for $g_1$ are close to that of $x_1$ under $f$, and the stable and unstable subspaces of $g_1$ at the point $g_1^t(x_1)$ are close to that of $f$ at the point $g_1^t(x_1)$ for every $t\in \llbracket 0,t_3+t_2\rrbracket$.

Remark that by the hypothesis on $\tau_0$, the Lyapunov exponent of $g_1^{\tau_1}$ at $x_1$ is bigger than 3, thus we will be able to apply Lemma~\ref{Lem*} to every point of the orbit belonging to $F^{g_1}_{x_1}$, even when the orbit returns several times near $x_1$. Also note that these properties are stable under $C^1$ perturbation.

We then use the connecting lemma for pseudo-orbits of C.~Bonatti and S.~Crovisier (see \cite{MR2090361}), which implies that the stable manifolds of the periodic orbits of a generic conservative $C^1$-diffeomorphism are dense. This allows us to perturb the diffeomorphism $g_1$ into a diffeomorphism $g_2\in\mathcal V'$ such that there exists a point $x_2$ close to $x_1$ such that:
\begin{enumerate}[(1)]
\item $x_2$ is periodic for $g_2$ with the same period than that of $x_1$ under $g_1$, and moreover the periodic orbit of $x_2$ under $g_2$ shadows that of $x_1$ under $g_1$;
\item the Lyapunov exponents and the Lyapunov subspaces of $x_2$ for $g_2$ are very close to that of $x_1$ for $g_1$ (see the conclusions of Lemma~\ref{ErgoLemPlus}, in particular the Lyapunov subspaces are close during a time $t_3+t_2$);
\item the stable manifold of $x_2$ under $g_2$ meets the set $U$, at a point denoted by $y_2$.
\end{enumerate}

\paragraph{Step 4: linearization near the periodic orbit.} We then use Franks lemma (see \cite{MR0283812}) to perturb slightly the differentials of $g_2$ at the points $g_2^{t_2+ t_3}(x_2),$ $\cdots,g_2^{t_2+ t_3+t_4}(x_2)$, such that these differentials belong to the open set of matrices defined by Lemma~\ref{DerTheoPart2}. This gives us another diffeomorphism $g_3\in\mathcal V'$ close to $g_2$, such that the point $x_2$ still satisfies the nice properties (1), (2) and (3).

We then use a lemma of \cite{ArturSylvain} which allows to linearize locally a conservative diffeomorphism.

\begin{lemme}[Avila, Crovisier, Wilkinson]\label{LemExtension}
Let $C$ be the unit ball of $\R^n$ for $\|\cdot\|_\infty$ and $\varep>0$. Then, there exists $\delta>0$ such that for every $g_1\in\Diff^\infty(\R^n,\Leb)$ such that $d_{C^1}({g_1}_{|C}, \Id_{|C})<\delta$, there exists $g_2\in\Diff^\infty(\R^n,\Leb)$ such that:
\begin{enumerate}[(i)]
\item $d_{C^1}({g_2}_{|C}, {g_1}_{|C})<\varep$;
\item ${g_2}_{|(1-\varep)C} = \Id_{|(1-\varep)C}$;
\item ${g_2}_{|C^\complement} = {g_1}_{|C^\complement}$.
\end{enumerate}
\end{lemme}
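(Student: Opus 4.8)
The plan is to leave $g_1$ unchanged outside $C$ and, inside $C$, to interpolate between $g_1$ (near $\partial C$) and the identity (on $(1-\varep)C$) through conservative diffeomorphisms that stay $C^1$-close to $\Id$; since the naive radial cut-off $\Id+\chi\,(g_1-\Id)$ destroys volume-preservation, the interpolation has to be performed one ``potential'' lower. Here $\varep$ is fixed first and $\delta=\delta(\varep,n)$ chosen afterwards, and I assume $n\ge 2$ (for $n=1$ the statement is empty).

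\emph{Step 1 (reduction by left-composition).} Look for $g_2=h\circ g_1$ with $h\in\Diff^\infty(\R^n,\Leb)$. Then $g_2=g_1$ on $C^\complement$ as soon as $h=\Id$ on a neighbourhood of $g_1(C^\complement)$, and $g_2=\Id$ on $(1-\varep)C$ as soon as $h$ agrees with $g_1^{-1}$ on $g_1\big((1-\varep)C\big)$; for $\delta$ small these two target sets lie in $\{\,\|x\|_\infty\ge 1-\varep/4\,\}$ and $\{\,\|x\|_\infty\le 1-\varep/2\,\}$ respectively. Moreover $D(h\circ g_1)-Dg_1=(Dh\circ g_1-\mathrm{I})\,Dg_1$, so $d_{C^1}(g_2,g_1)\le K\,d_{C^1}(h,\Id)$ with $K=K(n)$ --- composing the perturbation on the \emph{outside} avoids any appearance of the (uncontrolled) second derivatives of $g_1$. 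Since $\phi:=g_1^{-1}$ is again conservative and $C^1$-$O(\delta)$-close to $\Id$ on $\{\,\|x\|_\infty\le 1-\varep/2\,\}$, the lemma is reduced to: \emph{build a conservative $h$, $C^1$-close to $\Id$, equal to $\phi$ on $\{\|x\|_\infty\le 1-\varep/2\}$ and to $\Id$ on $\{\|x\|_\infty\ge 1-\varep/4\}$.}

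\emph{Step 2 (volume-preserving cut-off of a stream function).} Assume for the moment that $\phi$ is the time-one map, on $C$, of a time-dependent divergence-free vector field $(X_t)_{t\in[0,1]}$ with $\|X_t\|_{C^1}\le\eta$ for all $t$, $\eta$ small; extend it to a divergence-free field on $2C$ with comparable $C^1$-norm, keeping its time-one flow $\Phi^X_1$ equal to $\phi$ on $\{\|x\|_\infty\le 1-\varep/2\}$. Each $\iota_{X_t}\Leb$ is a closed $(n-1)$-form, so on the convex set $2C$ the Poincar\'e homotopy operator yields primitives $\beta_t$ with $d\beta_t=\iota_{X_t}\Leb$ and $\|\beta_t\|_{C^1}\le K_1\eta$ (for $n=2$ the $\beta_t$ are Hamiltonian functions and this is literally the ``stadium'' cut-off of Figure~\ref{FigStade}). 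Pick a smooth $\chi\colon\R^n\to[0,1]$, equal to $1$ on $\{\|x\|_\infty\le 1-\varep/2\}$, equal to $0$ on $\{\|x\|_\infty\ge 1-\varep/4\}$ and off $2C$, with $\|\chi\|_{C^2}\le K_2\varep^{-2}$, and let $Y_t$ be the divergence-free field dual to the closed form $d(\chi\beta_t)=d\chi\wedge\beta_t+\chi\,d\beta_t$. Then $Y_t=X_t$ where $\chi\equiv 1$, $Y_t=0$ where $\chi\equiv 0$, and $\|Y_t\|_{C^1}\le K_3\,\eta\,\varep^{-2}$. Set $h:=\Phi^Y_1$, the time-one map of the flow of $(Y_t)$. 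By Gronwall's inequality $\|h-\Id\|_{C^1}\le K_4\,\eta\,\varep^{-2}$, which, for $\eta$ small enough depending on $(\varep,n)$, is as small as we wish, so Step~1 yields $d_{C^1}(g_2,g_1)<\varep$; since the flow displaces points by at most $K_3\eta\varep^{-2}$, a trajectory issued from $\{\|x\|_\infty\le 1-\varep/2\}$ (shrunk by this amount) stays where $\chi\equiv 1$, so there $\Phi^Y_t=\Phi^X_t$ and hence $h=\phi$; likewise $h=\Id$ wherever $\|x\|_\infty\ge 1-\varep/4$. Finally $h$ is volume-preserving ($Y_t$ divergence-free), smooth, and --- being $C^0$-close to $\Id$ and equal to the diffeomorphism $\phi$, resp.\ to $\Id$, near the interfaces --- a diffeomorphism of $\R^n$. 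Together with Step~1 this produces $g_2$ satisfying (i), (ii) and (iii).

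\emph{Main obstacle.} Everything hinges on the hypothesis opening Step~2: that $\phi$ is the time-one map of a divergence-free field small in the $C^1$ norm (not merely in $C^0$) --- it is precisely this $C^1$-smallness that survives the derivatives of $\chi$ (of size $\varep^{-1}$, $\varep^{-2}$) landing on the primitive $\beta_t$. This is the one genuinely non-trivial point. Moser's trick applied to the linear path $x\mapsto x+s(\phi(x)-x)$ does furnish a conservative isotopy from $\Id$ to $\phi$, but the correction it produces has velocity controlled only in $C^0$ by $\delta$: its $C^1$-norm involves $\|D^2\phi\|_{C^0}$, which $d_{C^1}(\phi,\Id)<\delta$ does not bound. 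The way out is to replace $\phi$ first, at the cost of a $C^1$-error $<\varep/10$, by a conservative diffeomorphism that genuinely is the time-one flow of a $C^1$-small divergence-free field --- such flows forming a $C^1$-dense family near the identity --- and to reabsorb that error by one more left-composition as in Step~1 (together with a further, now harmless, pass through the scheme). Organizing this replacement, and all the gluings in the shells near $\partial C$, is the technical core of the statement; it is carried out in \cite{ArturSylvain}.
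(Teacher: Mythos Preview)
The paper does not actually prove this lemma: it is stated as a result of Avila--Crovisier--Wilkinson, and immediately after the statement the author writes only that ``the proof of this lemma involves a result of J.~Moser'' and refers the reader to \cite[Corollary~6.9]{ArturSylvain} for a complete proof. So there is no argument in the paper to compare against; your sketch already goes well beyond what the paper provides.

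That said, a few comments on your sketch itself. The reduction in Step~1 (left-composing by $h$ so as to avoid the second derivatives of $g_1$ in the $C^1$-estimate) and the mechanism in Step~2 (cutting off at the level of a primitive of $\iota_{X_t}\Leb$ so that the truncated field stays divergence-free, with the $C^1$-norm picking up only the harmless factor $\varep^{-2}$ from $\|\chi\|_{C^2}$) are both correct and are indeed the right skeleton for a Moser-type argument. You have also put your finger precisely on the one genuine difficulty: producing, from the sole hypothesis $d_{C^1}(\phi,\Id)<\delta$, a time-dependent divergence-free field generating $\phi$ with \emph{$C^1$-small} (not merely $C^0$-small) velocity.

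Your proposed resolution of that obstacle, however, is not quite closed. Replacing $\phi$ by a nearby time-one flow $\phi'$ and running Step~2 yields an $h'$ with $h'=\phi'$ (not $\phi$) on the inner cube, so $g_2'=h'\circ g_1$ equals $\phi'\circ g_1$ there, which is only $C^1$-close to $\Id$, not equal to it. ``Reabsorbing'' that residual error by another left-composition lands you back in exactly the situation of Step~1 with a new conservative map $C^1$-close to $\Id$ that must be cut off in a shell --- i.e., the lemma again. As written this is circular; one needs either a quantitative iteration that makes the errors contract, or a different device (this is where the Dacorogna--Moser type estimates with the right regularity gain enter in \cite{ArturSylvain}). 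You are clearly aware of this, since you flag it as ``the technical core'' and defer to \cite{ArturSylvain} --- which is exactly what the paper does as well.
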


The proof of this lemma involves a result of J.~Moser \cite{MR0182927}. The reader may refer to \cite[Corollary 6.9]{ArturSylvain} for a complete proof\footnote{The 10/01/2015, this version is not published online yet\dots}. By a regularization result due to A.~Avila \cite{MR2736152}, it is possible to weaken the hypothesis of regularity in the lemma ``$g_1\in\Diff^\infty(\R^n,\Leb)$'' into the hypothesis ``$g_1\in\Diff^1(\R^n,\Leb)$''.

By Lemma~\ref{LemExtension}, there exists a parameter $r>0$ such that it is possible to linearize $g_3$ in the $r$-neighbourhood of the periodic orbit of $x_2$, without changing the nice properties (1), (2) and (3) of the periodic orbit of $x_2$. We can choose $r$ small enough so that the $10\,r$-neighbourhoods of the points of the periodic orbit of $x_2$ are pairwise disjoint. This gives us a diffeomorphism $g_4$, to which are associated two points $x_4$ and $y_4$, such that $x_4$ satisfies the properties (1), (2) and (3), and such that:
\begin{enumerate}[(1)]\setcounter{enumi}{3}
\item the differentials of $f$ at the points $g_4^{t_2+ t_3}(x_4),\cdots,g_4^{t_2+ t_3+t_4}(x_4)$ lie in the open dense set of matrices of Lemma~\ref{DerTheoPart2};
\item $g_4$ is linear in the $r$-neighbourhood of each point of the periodic orbit of $x_4$.
\end{enumerate}

\paragraph{Step 5: choice of the order of discretization.}

We choose a neighbourhood $\mathcal V''\subset\mathcal V'$ of $g_4$ such that properties (1) to (3) are still true for every diffeomorphism $g\in\mathcal V''$. We denote by $\omega_{x_4}$ the periodic orbit of $x_4$ under $g_4$, and by $B(\omega_{x_4},r)$ the $r$-neighbourhood of this periodic orbit. We also denote $T_1$ the smallest integer such that $g_4^t(y_4) \in B(\omega_{x_4},r/2)$ for every $t\ge T_1$, and set $y'_4 = g_4^{T_1}(y_4)$. Thus, the positive orbit of $y'_4$ will stay forever in the linearizing neighbourhood of $\omega_{x_4}$. Taking $T_1$ bigger if necessary, we can suppose that $y'_4$ belongs to the linearizing neighbourhood of the point $x_4$. We can also suppose that for every $t\in \llbracket 0, \tau_1\rrbracket$,
\begin{equation}\label{LoinOrbPer}
3d\big(g_4^{T_1-t}(y_4),g_4^{-t}(x_4)\big) \le \min_{\tau_1\le t'\le T_1}d\big(g_4^{T_1-t'}(y_4),g_4^{-t}(x_4)\big).
\end{equation}

We can now choose the order $N$ of the discretization, such that
\begin{enumerate}[(i)]
\item $N\ge N_0$ ($N_0$ has been chosen at the very beginning of the proof);
\item \label{itemlinea} applying Lemma~\ref{LocTrans} to the parameter $r$ and the neighbourhood $\mathcal V''$ to get an integer $N_1$, we have $N\ge N_1$, so that it is possible to choose the value of the points of $\omega_{x_4}$ modulo $E_N$ without changing the properties (1) to (5);
\item the distance between two distinct points of the segment of orbit $y_4,$ $g_4(y_4), \cdots, g_4^{T_1}(y_4) = y'_4$ is bigger than $2(1+\eta')/N + 2/N$, so that it will be possible to apply Lemma~\ref{Lem*} simultaneously to each of these points, even after the perturbation made during the point \eqref{itemlinea}, such that these points belong to $E_N$;
\item every $\sqrt 2/N$-pseudo-orbit\footnote{The constant $\sqrt n/N$ comes from the fact that an orbit of the discretization is a $\sqrt 2/N$-pseudo-orbit.} starting at a point of the periodic orbit $\omega_{x_4}$ stays during a time $T'\tau_1$ in the $d(y'_4,\omega_{x_4})$-neighbourhood of the periodic orbit, where $T'$ the smallest integer such that
\begin{equation}\label{hyphyphyp}
 \left(1 + \frac{1}{3(1+\eta)}\right)^{T'} \ge \nu,
\end{equation}
and $\nu$ is the maximal modulus of the eigenvalues of $(Dg_4)_{x_4}^{\tau_1}$. A simple calculus shows that this condition is true if for example
\begin{equation*}
N \ge \frac{2\sqrt n (M^{T'\tau_1}-1)}{r(M-1)}.
\end{equation*}
This condition will be used to apply the process described by Lemma~\ref{LemLyapPerturb}.
\end{enumerate}

\paragraph{Step 6: application of the linear theorem.}

By the hypothesis (ii) on $N$, we are able to use Lemma~\ref{LocTrans} (elementary perturbation with local translation) to perturb each point of the periodic orbit $\omega_{x_4}$ such that we obtain a diffeomorphism $g_5\in\mathcal V''$ and points $x_5$, $y_5$ and $y'_5$ satisfying properties (1) to (5) and moreover:
\begin{enumerate}[(1)]\setcounter{enumi}{5}
\item for every $t\in \llbracket t_2 + t_3, t_2 + t_3 + t_4\rrbracket$, the value of $g_5^t(x_5)$ modulo $E_N$ is equal to $w_k/N$, where $w_k$ is given by Lemma~\ref{DerTheoPart2};
\item for any other $t$, $g_5^t(x_5)$ belongs to $E_N$.
\end{enumerate}
In particular, the periodic orbit of $x_5$ under $g_5$ is stabilized by the discretization $(g_5)_N$ (indeed, recall that $w_k \in [-1/2,1/2]^k$).

By construction of the diffeomorphism $g_5$ (more precisely, the hypotheses (4), (5), (6) and (7)), it satisfies the conclusions of Lemma~\ref{DerTheoPart2}; thus there exists a point $z\in B\big(g_5^{t_2+t_3}(x_5),r\big)$ such that $(g_5)_N^{t_4} (z) = (g_5)_N^{t_2+t_3+t_4} (x_5)$ and that $\|z - g_5^{t_2+t_3}(x_5)\|\ge R_0/N$ (where $R_0$ is defined by Equation~\eqref{DefRr0}). Remark that hypothesis (iv) implies that $\|z-g_5^{t_2+t_3}(x_5)\|\ll r$.

\paragraph{Step 7: perturbations in the linear world.}

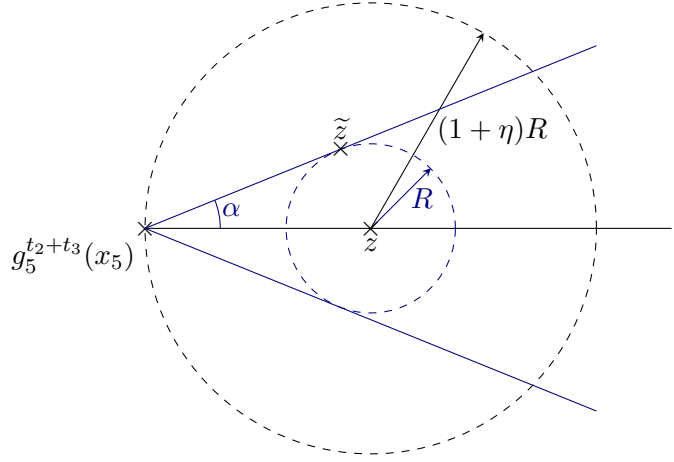
\begin{figure}[t]
\begin{center}
\begin{tikzpicture}[scale=1]
\draw (0,0) node {$\times$};
\draw (0,0) node[below left] {$g_5^{t_2+t_3}(x_5)$};

\draw (0,0) -- (7,0);
\draw[dashed] (3,0) circle (3);
\draw[->,>=stealth] (3,0) -- (4.5,2.6) node[pos=.5,right]{$(1+\eta)R$};
\draw[dashed, color=blue!50!black] (3,0) circle (9/8);
\draw[->,>=stealth, color=blue!50!black] (3,0) -- (3.8,0.8) node[pos=.5,right]{$R$};

\draw[color=blue!50!black] (0,0) -- (6,2.43);
\draw[color=blue!50!black] (0,0) -- (6,-2.43);
\draw[color=blue!50!black] (1,0) arc (0:22.05:1);
\draw[color=blue!50!black] (1.15,.25) node{$\alpha$};

\draw (2.6,1.053) node {$\times$};
\draw (2.6,1.053) node[above] {$\widetilde z$};

\draw (3,0) node {$\times$};
\draw (3,0) node[below] {$z$};
\end{tikzpicture}
\caption[Perturbation made to apply Lemma~\ref{LemAnglesOsel}]{Perturbation we make to apply Lemma~\ref{LemAnglesOsel} (see also Figure~\ref{FigRotaGlobC1}): we make an elementary perturbation in a neighbourhood of $z$ mapping $z$ into $\widetilde z$, such that the angle between the lines $\big(g_5^{t_2+t_3}(x_5)\ z\big)$ and $\big(g_5^{t_2+t_3}(x_5)\ \widetilde z\big)$ is bigger than $\alpha = \arcsin\big(1/(1+\eta)\big)$, and such that the support of the perturbation does not contain $g_5^{t_2+t_3}(x_5)$.}\label{FigRotaC1}
\end{center}
\end{figure}

In this step, our aim is to perturb the negative orbit of $z$ under $g_5$ such that it meets the point $y'_5$. Remark that by hypothesis (iv), every point of $z,g_5^{-1}(z),\cdots,g_5^{-t_2}(z)$ is in the linearizing neighbourhood of $\omega_{x_5}$.

From now, all the perturbations we will make will be local, and we will only care of the positions of a finite number of points. Thus, it will not be a problem if these perturbations make hypotheses (3) and (5) become false, provided that they have a suitable behaviour on this finite set of points.

First, if necessary, we make a perturbation in the way of Figure~\ref{FigRotaC1}, so that the angle between the lines $\big( g_5^{t_2+t_3}(x_5)\ z\big)$ and $G_{g_5^{t_2+t_3}(x_5)}^{g_5}$ is bigger than $\alpha$; this gives us a diffeomorphism $g_6$. More precisely, the support of the perturbation we apply is contained in a ball centred at $z$ and with radius $d(z,x_6)$, so that this perturbation does not change the orbit of $x_6$. Under these conditions, we satisfy the hypotheses of Lemma~\ref{LemAnglesOsel}, thus the angle between $\big( g_6^{t_3}(x_6)\ g_6^{-t_2}(z)\big)$ and $F_{{g_6}^{t_3}(x_6)}^{g_6}$ is smaller than $\alpha$. Another perturbation, described by Figure~\ref{FigRotaC1}, allows us to suppose that $g_6^{-t_2}(z)$ belongs to $F_{{g_6}^{t_3}(x_6)}^{g_6}$. This gives us a diffeomorphism that we still denote by $g_6$. Remark that it was possible to make these perturbations independently because the segment of negative orbit of the point $z$ we considered does not enter twice in the 
neighbourhood of a point of $\omega_{x_6}$ where the diffeomorphism is linear.

Thus, the points $z' = g_6^{-t_2}(z)$ and $y'_6 = y'_5$ both belong to the local stable manifold of the point $x_6 = x_5$ for $g_6$ (which coincides with the Oseledets linear subspace $F_{x_6}^{g_6}$ since $g_6$ is linear near $x_6$).
\bigskip

The next perturbation takes place in the neighbourhood of the point $x_6$ (and not in all the linearizing neighbourhoods of the points of $\omega_{x_6}$).

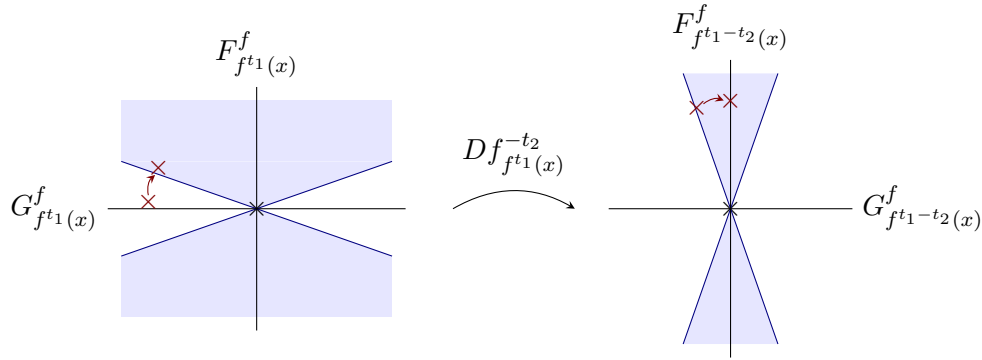
\begin{figure}[t]
\begin{center}
\begin{tikzpicture}[scale=.9]

\fill[color=blue!10!white] (-6,.7) -- (-2,-.7)  -- (-6,-.7) -- (-2,.7) -- cycle;
\fill[color=blue!10!white] (-6,.7) -- (-2,.7)  -- (-2,1.6) -- (-6,1.6) -- cycle;
\fill[color=blue!10!white] (-6,-.7) -- (-2,-.7)  -- (-2,-1.6) -- (-6,-1.6) -- cycle;
\draw (-4,0) node {$\times$};
\draw[color=blue!50!black] (-6,-.7) -- (-2,.7);
\draw[color=blue!50!black] (-6,.7) -- (-2,-.7);
\draw (-1.8,0) -- (-6.2,0);
\draw (-4,-1.8) -- (-4,1.8);
\draw (-4,1.8) node[above]{$F^f_{f^{t_1}(x)}$};
\draw (-6.2,0) node[left]{$G^f_{f^{t_1}(x)}$};
\draw[color=red!50!black] (-5.6,.1) node{$\times$};
\draw[color=red!50!black] (-5.45,.6) node{$\times$};
\draw[->,>=stealth, color=red!50!black] (-5.6,.2) to[bend left] (-5.5,.5);

\draw[->,>=stealth] (-1.1,0) to[bend left] (.7,0);
\draw (-.2,.8) node{$Df^{-t_2}_{f^{t_1}(x)}$};

\fill[color=blue!10!white] (2.3,-2) -- (3.7,2)  -- (2.3,2) -- (3.7,-2) -- cycle;
\draw (3,0) node {$\times$};
\draw[color=blue!50!black] (2.3,-2) -- (3.7,2);
\draw[color=blue!50!black] (2.3,2) -- (3.7,-2);
\draw (1.2,0) -- (4.8,0);
\draw (3,-2.2) -- (3,2.2);
\draw (3,2.2) node[above]{$F^f_{f^{t_1-t_2}(x)}$};
\draw (4.8,0) node[right]{$G^f_{f^{t_1-t_2}(x)}$};
\draw[color=red!50!black] (2.5,1.5) node{$\times$};
\draw[color=red!50!black] (3,1.6) node{$\times$};
\draw[->,>=stealth, color=red!50!black] (2.6,1.55) to[bend left] (2.9,1.6);

\end{tikzpicture}
\caption[Proof of Lemma~\ref{LemAnglesOsel}]{Proof of Lemma~\ref{LemAnglesOsel}: make a small perturbation at times $t_1$ and $t_1-t_2$ (in red), the hyperbolic-like behaviour of $f$ does the rest of the work for you. In red: the perturbation that we will make during step 7.}\label{FigRotaGlobC1}
\end{center}
\end{figure}

\begin{lemme}\label{LemLyapPerturb}
For every $y'\in F_{x_6}^{g_6}$ such that $d(y',x_6) > d(z',x_6)\nu^{T'\tau_1}$ ($T'$ being defined by Equation~\eqref{hyphyphyp}), there exists a diffeomorphism $g_7$ close to $g_6$ and $T''\in\N$ such that $g_7^{-\tau_1 T''}(z') = y'$. Moreover, the perturbations made to obtain $g_7$ are contained in the linearizing neighbourhood of $\omega_{x_6}$, do not modify the images of $\omega_{x_6}$, nor these of the negative orbit of $z'$ by the discretization or these of the positive orbit of $y'$ in the linearizing neighbourhood of $\omega_{x_6}$
\end{lemme}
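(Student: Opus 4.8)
The plan is to run the entire argument inside the linearizing chart around $x_6$. Since $g_6$ is linear in the $r$-neighbourhood of every point of $\omega_{x_6}$, and since the radius $r$ and the time $T_1$ were chosen in Step~5 precisely so that the orbit segments we shall meet stay in the union of these balls, the return map $g_6^{\tau_1}$ coincides near $x_6$ with the linear map $A:=(Dg_6^{\tau_1})_{x_6}$. As $x_6$ has nonzero Lyapunov exponents for $g_6$ and $g_6$ preserves area in dimension $2$, the local stable manifold of $x_6$ is the $A$-invariant line $F:=F^{g_6}_{x_6}$, and $A|_F$ is a contraction: taking $x_6$ as the origin of the chart, $d(g_6^{-\tau_1}(w),x_6)=\nu\,d(w,x_6)$ for every $w\in F$ in the chart, where $\nu\ge 3$ is the modulus of the eigenvalue of $A|_F$ (and $\nu\le M^{\tau_1}$ by the choices of Steps~2 and~3). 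Both $z'$ and $y'$ lie on $F$.

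I would then construct $g_7$ period by period. Let $T''$ be the smallest integer with $\nu^{T''}d(z',x_6)\ge d(y',x_6)$; the hypothesis $d(y',x_6)>d(z',x_6)\,\nu^{T'\tau_1}$ forces $T''>T'\tau_1$, so that a large stock of periods is available, and the unperturbed backward iterates $z'_k:=g_6^{-k\tau_1}(z')$, $0\le k\le T''$, all lie on $F$ at distance $\nu^k d(z',x_6)\le d(y',x_6)$ from $x_6$, hence well inside the linearizing ball. At the $k$-th period I would compose $g_6$ with a perturbation supported in the ball $B\big(z'_k,\frac12 d(z'_k,x_6)\big)$ --- a ball that by construction avoids $x_6$ and all of $\omega_{x_6}$, the discretized negative orbit of $z'$, and the forward orbit of $y'$ --- which near $z'_k$ is the restriction of a rotation-homothety centred at $x_6$ of small angle $\theta_k$ and ratio $1+\eta_k$. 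Such a perturbation is provided by Lemma~\ref{PerturbElem} (or by a local-translation variant in the spirit of Lemma~\ref{LocTrans}), and being $C^1$-small and localized it keeps $g_7$ inside $\mathcal V''$.

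Over the first $O(1)$ periods the angles $\theta_k$ are used to steer $z'_k$ onto the ray of $F$ through $y'$ (each period rotates the position by a definite fraction of a turn, so finitely many suffice; the case where $z'$ and $y'$ lie on opposite branches of $W^s$ is absorbed here, or is handled by the sign of the eigenvalue of $A|_F$). The ratios $1+\eta_k$ then perform the radial fine-tuning: the unperturbed distance after $T''$ periods is $\nu^{T''}d(z',x_6)$, which differs from $d(y',x_6)$ by a multiplicative factor lying in $[\nu^{-1},1]\subset[M^{-\tau_1},1]$; spreading a cumulative correction $\prod_k(1+\eta_k)$ over sufficiently many of the remaining periods, with each factor confined to a fixed interval $[1-\eta,1+\eta]$, the product sweeps out an interval containing that factor, so by the intermediate value theorem the $\eta_k$ can be chosen so that $g_7^{-\tau_1 T''}(z')=y'$ exactly. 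By construction the perturbations all sit in the linearizing neighbourhood of $\omega_{x_6}$, leave $\omega_{x_6}$ and its images untouched, and meet neither the discretized negative orbit of $z'$ nor the forward orbit of $y'$ --- the additional properties claimed in the statement.

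The step I expect to be the main obstacle is the simultaneous bookkeeping: keeping every elementary perturbation $C^1$-small while localizing it in a ball that meets none of the protected sets ($\omega_{x_6}$ and its images, the already-fixed discretized orbit, the forward orbit of $y'$) and yet leaving enough freedom to realize an arbitrary small rotation together with the prescribed radial correction. This is exactly what forces $z'$ --- hence each $z'_k$ --- to be far from $x_6$ both on the scale $1/N$ and on the scale $r$ of the linearizing ball (the choice of $R_0$ in \eqref{DefRr0} and of $N$ in Step~5), and it is where the quantitative hypothesis $d(y',x_6)>d(z',x_6)\,\nu^{T'\tau_1}$, together with condition~(iv) of Step~5 (orbits staying near $\omega_{x_6}$ long enough), genuinely enter. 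Checking that inserting these perturbations preserves the earlier conclusions of the construction --- the periodic orbit still stabilized by the discretization, the orbit of $y$ still on the grid --- is the remaining, essentially routine, verification.
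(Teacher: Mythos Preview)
Your approach shares the paper's core idea --- make one elementary perturbation per period along the backward orbit of $z'$ inside the linearizing chart --- but the paper's execution is both simpler and avoids a genuine slip in your write-up.

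The angular component is unnecessary. Since $z'$ was placed on $F^{g_6}_{x_6}$ at the end of Step~7 and $y'\in F^{g_6}_{x_6}$ by hypothesis, the whole argument lives on the $A$-invariant line $F$; the paper never leaves it. Rather than fixing $T''$ and using two-sided factors in $[1-\eta,1+\eta]$, the paper uses a one-sided radial stretch: at each period the perturbation allows the new position to be any point of $\big[g_6^{-\tau_1}(q),\,(1+\tfrac{1}{2(1+\eta)})\,g_6^{-\tau_1}(q)\big]$, where $q$ is the \emph{current} point. After $t$ periods the reachable set is the interval $I_t=\big[g_6^{-\tau_1 t}(z'),\,(1+\tfrac{1}{2(1+\eta)})^{t} g_6^{-\tau_1 t}(z')\big]$; condition~\eqref{hyphyphyp} makes $I_t$ and $I_{t+1}$ overlap once $t\ge T'$, so $\bigcup_t I_t\supset[g_6^{-\tau_1 T'}(z'),+\infty[$, which contains $y'$ by the hypothesis. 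The integer $T''$ is then simply whichever $t$ captures $y'$.

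There is also a bookkeeping issue that would make your argument fail as written. You centre the $k$-th perturbation at the \emph{unperturbed} iterate $z'_k=g_6^{-k\tau_1}(z')$, but after the earlier perturbations the actual orbit no longer passes through $z'_k$: the cumulative radial correction you need is of order $\nu^{-1}\le 1/3$, so near the end the true orbit point lies outside $B(z'_k,\tfrac12 d(z'_k,x_6))$ and your perturbation, as located, does not touch it. The paper instead centres each perturbation at $g_6^{-\tau_1}$ of the \emph{actual} current position; disjointness of the successive supports (and their avoidance of $x_6$) then follows from the expansion of $g_6^{-\tau_1}|_{F}$ being larger than~$3$.
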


\begin{figure}[t]
\begin{center}
\begin{tikzpicture}[scale=1]

\draw[->,>=stealth] (0,0) -- (10.5,0);
\draw (10.5,0) node[right]{$F_{x_6}^{g_6}$};
\draw (.5,0) node {$|$};
\draw (.5,-.5) node {$x_6$};

\draw (1.5,0) node {$\times$};
\draw (1.5,-.5) node {$z'$};
\draw[->,>=stealth,color=blue!50!black] (1.7,-.1) to[bend right] (2.8,-.1);
\draw[color=blue!50!black] (2.25,-.7) node {$g_6^{-\tau_1}$};

\draw[color=blue!50!black] (3,0) node {$\times$};
\draw[dotted] (3,0) circle (1.2);
\draw[dashed] (3,0) circle (.6);
\draw[->,>=stealth,color=blue!50!black] (3.2,-.1) to[bend right] (5.7,-.1);
\draw[color=blue!50!black] (4.45,-.9) node {$g_6^{-\tau_1}$};

\draw[color=blue!50!black] (5.9,0) node {$\times$};

\draw[->,>=stealth,color=red!50!black] (1.7,.1) to[bend left] (3.4,.1);
\draw[->,>=stealth,color=red!50!black,dotted] (1.7,.1) to[bend left] (2.8,.1);
\draw[->,>=stealth,color=red!50!black] (3.8,.1) to[bend left] (8.2,.1);
\draw[->,>=stealth,color=red!50!black,dotted] (3.8,.1) to[bend left] (7,.1);
\draw[color=red!50!black] (3.6,0) node {$\times$};
\draw[color=red!50!black] (8.4,0) node {$\times$};
\draw[dotted] (7.2,0) circle (2.4);
\draw[dashed] (7.2,0) circle (1.2);

\draw[color=red!50!black] (8.4,0) node {$\times$};

\end{tikzpicture}
\caption[Perturbation made to merge orbits]{Perturbation such that the point $y'_6$ belongs to the negative orbit of $z'$: the initial orbit is drawn in blue (below) and the perturbed orbit in red (above). From a certain time, the red orbit overtakes the blue orbit.}\label{FigLyapPerturb}
\end{center}
\end{figure}
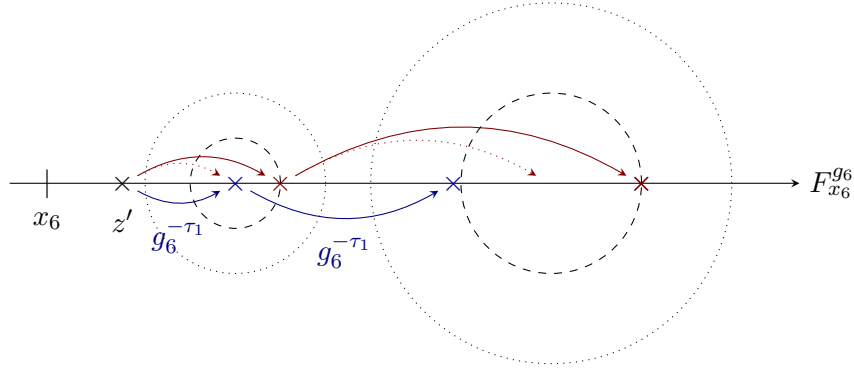

\begin{proof}[Proof of Lemma~\ref{LemLyapPerturb}]
During this proof, if $r$ and $s$ are two points of $W^s(x_6)$, we will denote by $[r,s]$ the segment of $W^s(x_6)$ between $r$ and $s$. Remark that if $r$ and $s$ lie in the neighbourhood of $x_6$ where $g_6$ is linear, then $[r,s]$ is a real segment, included in $F_{x_6}^{g_6}$ moreover, we will denore $[r,+\infty[$ the connected component of $W^s(x_6)\setminus\{r\}$ which does not contain $x_6$.

Consider the point $z'\in F_{x_6}^{g_6}$, and choose a point 
\[p\in \left[g_6^{-\tau_1}(z'), \left(1+\frac{1}{2(1+\eta)}\right)g_6^{-\tau_1}(z')\right].\]
By applying an elementary perturbation (Lemma~\ref{PerturbElem})  whose support is contained into $B\big(g_6^{-\tau_1}(z'),\, d(x_6,g_6^{-\tau_1}(z'))/2\big)$, it is possible to perturb $g_6$ into a diffeomorphism $g_7$ such that $g_6^{\tau_1}(z') = p$ (see Figure~\ref{FigLyapPerturb}). Applying this process $t$ times, for every
\[p\in \left[g_6^{-\tau_1t}(z'), \left(1+\frac{1}{2(1+\eta)}\right)^{t}g_6^{-\tau_1t}(z')\right],\]
it is possible to perturb $g_6$ into a diffeomorphism $g_7$ such that $g_7^{-\tau_1 t}(z') = p$ (the supports of the perturbations are disjoint because the expansion of $g^{-\tau_1}_{|F_{x_6}^{g_6}}$ is bigger than 3). But as $T'$ satisfies Equation~\eqref{hyphyphyp}, the union
\[\bigcup_{t\ge 0}\left[g_6^{-\tau_1t}(z'), \left(1+\frac{1}{2(1+\eta)}\right)^{t}g_6^{-\tau_1t}(z')\right]\]
covers all the interval $[g_6^{-\tau_1T'}(z'), +\infty[$. By the hypothesis made on $y'$, we also have $y'\in [g_6^{-\tau_1T'}(z'), +\infty[$; this proves the lemma.
\end{proof}
Thus, by hypothesis (iv), it is possible to apply Lemma~\ref{LemLyapPerturb} to our setting. This gives us a diffeomorphism $g_7$.

\paragraph{Step 8: final perturbation to put the segment of orbit on the grid.}

To summarize, we have a diffeomorphism $g_7\in\mathcal V'$, and periodic orbit $\omega_{x_7}$ of $g_7$, stabilized by $(g_7)_N$, which bears a measure close to $\mu$. We also have a segment of real orbit of $g_7$ which links the points $y_7\in U$ and $z$, where $z$ is such that $(g_7)_N^{t_4}(z) \in (\omega_{x_7})_N$. To finish the proof of the lemma, it remains to perturb $g_7$ so that the segment of orbit which links the points $y_7$ and $z$ is stabilized by the discretization $(g_7)_N$.
\bigskip

We now observe that by the construction we have made, the distance between two different points of the segment of orbit under $g_7$ between $y_7$ and $z$ is bigger than $2(1+\eta')/N$, and the distance between one point of this segment of orbit and a points of $\omega_{x_7}$ is bigger than $(1+\eta')/N$.

Indeed, if we take one point of the segment of forward orbit $z, g_7^{-1} (z),\cdots, g_7^{-t_2-t_3} (z)$, and one point in the periodic orbit $\omega_{x_7}$, this is due to the hypothesis $\|z - x_7\|\ge R_0/N$ ($R_0$ being defined by Equation~\eqref{DefRr0}) combined with Equation~\eqref{EqPasIdee}. If we take one point in this segment $z, g_7^{-1} (z),\cdots, g_7^{-t_2-t_3} (z)$, and one among the rest of the points (that is, the segment of orbit between $y_7$ and $z$), this is due to the fact that the Lyapunov exponent of $g_7^{\tau_1}$ in $x_7$ is bigger than 3, and to Equation~\eqref{LoinOrbPer}.

If we take one point of the form $g_7^{-t}(z)$, with $t> t_2+t_3$, but belonging to the neighbourhood of $\omega_{x_7}$ where $g_7$ is linear, and one point of $\omega_{x_7}$, this follows from the estimation given by Equation~\eqref{Pastropgros} applied to $\|v\|\ge R_0/N$. If for the second point, instead of considering a point of $\omega_{x_7}$, we take an element of the segment of orbit between $y_7$ and $z$, this follows from the fact that the Lyapunov exponent of $g_7^{\tau_1}$ in $x_7$ is bigger than 3.

Finally, for the points of the orbit that are not in the neighbourhood of $\omega_{x_7}$ where $g_7$ is linear, the property arises from hypothesis (iii) made on $N$.

Thus, by Lemma~\ref{Lem*}, we are able to perturb each of the points of the segment of orbit under $g_7$ between $y_7$ and $z$, such that each of these points belongs to the grid. This gives us a diffeomorphism $g_8\in \mathcal V$. 

To conclude, we have a point $y_8\in U$ whose orbit under $(g_8)_N$ falls on the periodic orbit $(\omega_{x_7})_N$, which bears a measure $\varep$-close to $\mu$. The lemma is proved.
\bigskip

The proof in higher dimensions is almost identical. The perturbation lemmas are still true\footnote{In particular, Lemma~\ref{LocTrans} can be obtained by considering a plane $(P)$ containing both $x$ and $y$ and taking a foliation of $\R^n$ by planes parallel to $(P)$. The desired diffeomorphism is then defined on each leave by the time-$\psi(t)$ of the Hamiltonian given in the proof of the lemma, with $\psi$ is a smooth compactly supported map on the space $\R^n/(P)$, equal to 1 in $0$ and with small $C^1$ norm.}, and the arguments easily adapts by considering the ``super-stable'' manifold of the orbit $\omega_x$, that is the set of points $y\in\T^n$ whose positive orbit is tangent to the Oseledets subspace corresponding to the maximal Lyapunov exponent. In particular, Lemma~\ref{ErgoLemPlus} is still true in this setting, and the connecting lemma for pseudo-orbits {MR2090361} implies that generically, this ``super-stable'' manifold is dense in $\T^n$.
\end{proof}

The proofs of the two statements of the addendum are almost identical.

For the first statement (the fact that for every $\varep>0$, the basin of attraction of the discrete measure can be supposed to contain a $\varep$-dense subset of the torus), we apply exactly the same proof than that of Lemma~\ref{LemMesPhysDiff}: making smaller perturbations of the diffeomorphism if necessary, we can suppose that the stable manifold of $y_8$ is $\varep$-dense. Thus, there exists a segment of backward orbit of $y_8$ which is $\varep$-dense, and we apply the same strategy of proof consisting in putting this segment of orbit on the grid. 

For the second statement, it suffices to apply the strategy of the first statement, and to conjugate the obtained diffeomorphism $g_9$ by an appropriate conservative diffeomorphism with small norm (this norm can be supposed to be as small as desired by taking $\varep$ small), so that the image of the $\varep$-dense subset of $\T^2$ by the conjugation contains the set $E$.
\bigskip

To obtain Theorem~\ref{TheoMesPhysDiffDissip} (dealing with the dissipative case) it suffices to replace the use of Lemmas~\ref{LemAnglesOsel} and~\ref{DerTheoPart2} by the following easier statement.

\begin{lemme}\label{RemplaceDissip}
For every $\alpha>0$ and every $R_0>0$, there exists three times $t_1$, $t_2 \ge 0$ and $t_4\ge 0$ such that:
\begin{itemize}
\item there exists $v\in T \T^n_{f^{t_1}(x)}\cap \Z^n$ such that $\|v\|\ge R_0$ and
\[\big(\widehat{Df}_{f^{t_1 + t_4}(x)}\circ \cdots \circ \widehat{Df}_{f^{t_1}(x)}\big)(v) = 0;\]
\item if $v\in T \T^n_{f^{t_1}(x)}$ is such that the angle between $v$ and $G_{f^{t_1}(x)}$ is bigger than $\alpha$, then the angle between $Df^{-t_2}_{f^{t_1}(x)} v$ and $F_{f^{t_1 - t_2}(x)}$ is smaller than $\alpha$ (see Figure~\ref{FigRotaGlobC1}).
\end{itemize}
\end{lemme}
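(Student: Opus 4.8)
The plan is to dispatch the two bullet points separately: the second one is essentially free, and the first one is where the dissipativity hypothesis does all the work and makes the statement much softer than its conservative analogue (Lemma~\ref{DerTheoPart2}).

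For the second bullet there is nothing to do beyond invoking Lemma~\ref{LemAnglesOsel}: that lemma only uses that $x$ is an Oseledets-regular point of $\mu$ and that $F,G$ denote the Oseledets subspaces, and says nothing about the signs of the exponents. So one applies Lemma~\ref{LemAnglesOsel} to the given $\alpha$ to produce the times $t_1$ and $t_2$, and keeps this $t_1$ for what follows.

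For the first bullet, write $L^{(t)} = Df_{f^{t_1+t}(x)}\circ\cdots\circ Df_{f^{t_1}(x)}$ for the genuine linear cocycle of $f$ over the first $t+1$ iterates of $f^{t_1}(x)$, and $\widehat{L}^{(t)}$ for the associated composition of discretizations. Since $x$ is $\mu$-typical, $\frac{1}{t}\log|\det L^{(t)}| \to \sum_i \lambda_i(\mu)\le 0$; replacing $f$ by a preliminary $C^1$-small perturbation in the case the sum vanishes, one may assume it is $<0$, so that $|\det L^{(t)}| \to 0$ exponentially fast. Fix then $R_1 \gg R_0$ and call $B_{R_1}$ the ball of radius $R_1$. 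For $t$ large, the image $\widehat{L}^{(t)}(B_{R_1}\cap\Z^n)$ lies near the ellipsoid $L^{(t)}(B_{R_1})$, whose volume $|\det L^{(t)}|\,R_1^n$ is, once $R_1$ is fixed large and $t$ is large, a negligible fraction of $\card(B_{R_1}\cap\Z^n)$; a pigeonhole count then produces a point of $\Z^n$ whose $\widehat{L}^{(t)}$-preimage inside $B_{R_1}$ has at least $m$ elements, with $m$ as large as we wish — in particular, one of those preimages has norm $\ge R_0$. It remains to route this preimage to the point $0$, which represents the periodic point $x$ in the linearizing chart. Here one uses that, in the dissipative setting, the periodic orbit produced by the ergodic closing lemma (Lemma~\ref{ErgoLemPlus}) may be turned into a \emph{sink} by a further $C^1$-small perturbation (only the fact that it carries a measure close to $\mu$ matters for Theorem~\ref{TheoMesPhysDiffDissip}, not its exponents). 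Once $f^{t_1}(x)$ lies on such a sink that has been stabilised by the discretization, and $R_1$ has been chosen so that $L^{(t)}(B_{R_1})$ falls well inside its basin, each grid point of the preimage above is carried by $f_N$ onto the periodic orbit after a further bounded number $t_4-t$ of steps, which yields $t_4$ and $v$.

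The main obstacle is exactly the one that, in the conservative case, forces the delicate model-set analysis of the appendix (Lemma~\ref{ConjPrincip}): the composition of discretizations is not the discretization of the composition, so that rounding errors accumulate along $\widehat{L}^{(t)}$ and could a priori be amplified by the expanding part of the cocycle. In the present situation the exponential volume contraction coming from $\sum_i\lambda_i(\mu)<0$, together with the freedom to make the relevant periodic orbit a genuine sink, overwhelms these bounded amplifications, and the argument becomes elementary. The one point that still needs a little care is to guarantee that, once a grid point has entered the $O(1/N)$-neighbourhood of the stabilised sink, it really lands \emph{on} the periodic orbit and not on some spurious nearby periodic cycle of $f_N$; this is arranged exactly as the stabilisation itself, by local perturbations of the type of Lemma~\ref{LocTrans} making the discretized dynamics strictly contract onto the discretized periodic orbit at the scale of the grid.
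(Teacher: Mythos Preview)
Your handling of the second bullet is correct and matches the paper: it is precisely Lemma~\ref{LemAnglesOsel}, which is part of what the one-line proof (``Oseledets theorem and the hypothesis that the sum of exponents is strictly negative'') is invoking.

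For the first bullet, however, your argument has genuine gaps and diverges from the paper. The paper's proof involves no perturbation of $f$, no closing lemma, and no periodic orbit: the lemma is a purely linear statement about the cocycle $(Df_{f^i(x)})_i$ at an Oseledets-regular point, meant to follow directly from $|\det Df^t_{f^{t_1}(x)}|\to 0$. Since $\widehat A(0)=0$ for every linear $A$, the preimages of $0$ under the successive discretized compositions form an increasing family in $\Z^n$, and it is the exponential decay of the Jacobians that forces this family to eventually contain vectors of norm $\ge R_0$. Your route instead imports the periodic orbit from Lemma~\ref{ErgoLemPlus} into the proof of the lemma and tries to turn it into a sink, but this step fails: if $\mu$ has a positive Lyapunov exponent --- entirely compatible with $\sum_i\lambda_i(\mu)<0$ --- the closed orbit inherits an exponent close to it, and no $C^1$-small perturbation can make that exponent negative (Franks' lemma only moves each differential along the orbit by a small amount). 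Your pigeonhole step is also unjustified: it rests on the assertion that $\widehat L^{(t)}(B_{R_1}\cap\Z^n)$ lies near the ellipsoid $L^{(t)}(B_{R_1})$, but the accumulated rounding error can grow exponentially whenever the cocycle has an expanding direction, so that containment need not hold. You yourself flag this difficulty, and then assert that the volume contraction ``overwhelms'' it, but no argument is supplied.
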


\begin{proof}[Proof of Lemma~\ref{RemplaceDissip}]
This comes from Oseledets theorem and the hypotheses made on the Lyapunov exponents of $x$, and in particular that their sum is strictly negative.
\end{proof}

\section{Numerical simulations}\label{NumSimPhys}

In this section, we present the results of the numerical simulations we have conducted in connection with Theorem~\ref{TheoMesPhysDiff}.

\subsection[Simulations of the measures $\mu^{f_N}_{x}$]{Simulations of the measures $\mu^{f_N}_{x}$ for conservative torus diffeomorphisms}

We have computed numerically the measures $\mu^{f_N}_x$ for conservative diffeomorphisms $f\in\Diff^1(\T^2,\Leb)$, for the uniform grids
\[E_N = \left\{\left(\frac{i}{N},\frac{j}{N}\right)\in \T^2 \big|\  0\le i,j\le {N}-1\right\},\]
and for starting points $x$ either equal to $(1/2,1/2)$, or chosen at random. We present images of sizes $128\times 128$ pixels representing in logarithmic scale the density of the measures $\mu^{f_N}_x$: each pixel is coloured according to the measure carried by the set of points of $E_N$ it covers. Blue corresponds to a pixel with very small measure and red to a pixel with very high measure. Scales on the right of each image corresponds to the measure of one pixel on the $\log 10$ scale: if green corresponds to $-3$, then a green pixel will have measure $10^{-3}$ for $\mu^{f_N}_x$. For information, when Lebesgue measure is represented, all the pixels have a value about $-4.2$.

We have carried out the simulations on two different diffeomorphisms.\label{DefDiffeoPhys}
\begin{itemize}
\item The first conservative diffeomorphism $f_1$ is of the form $f_1= Q\circ P$, where both $P$ and $Q$ are homeomorphisms of the torus that modify only one coordinate:
\[P(x,y) = \big(x,y+p(x)\big)\quad\text{and}\quad Q(x,y) = \big(x+q(y),y\big),\]
with
\[p(x) = \frac{1}{209}\cos(2\pi\times 17x)+\frac{1}{471}\sin(2\pi\times 29x)-\frac{1}{703}\cos(2\pi\times 39x),\]
\[q(y) = \frac{1}{287}\cos(2\pi\times 15y)+\frac{1}{403}\sin(2\pi\times 31y)-\frac{1}{841}\sin(2\pi\times 41y).\]
This $C^\infty$-diffeomorphism is in fact $C^1$-close to the identity. This allows $f_1$ to admit periodic orbits with not too large periods. Note that $f_1$ is also chosen so that it is not $C^2$-close to the identity. 

\item The second conservative diffeomorphism $f_2$ is the composition $f_2 = f_1\circ A$, with $A$ the linear Anosov map
\[A = \begin{pmatrix} 2 & 1 \\ 1 & 1 \end{pmatrix}.\]
As $f_1$ is $C^1$-close to $\Id$, the diffeomorphism $f_2$ is $C^0$-conjugated to the linear automorphism $A$, which is in particular ergodic.
\end{itemize}
\bigskip

To compute these measures, we used Floyd's Algorithm (or the ``tortoise and the hare algorithm''). It has appeared that on the examples of diffeomorphisms we have tested, we were able to test orders of discretization $N\simeq 2^{20}$. Thus, the first figures represent the measures $\mu^{f_N}_x$ for $N\in\llbracket 2^{20}+1,2^{20}+9\rrbracket$. We have also computed the distance between the measure $\mu^{f_N}_x$ and Lebesgue measure (see Figure~\ref{GrafDistLebPhys}). The distance we have chosen is given by the formula
\[d(\mu,\nu) = \sum_{k=0}^\infty \frac{1}{2^k} \sum_{i,j=0}^{2^k-1} \big| \mu(C_{i,j,k}) - \nu(C_{i,j,k})\big|\in[0,2],\]
where
\[C_{i,j,k} = \left[\frac{i}{2^k},\frac{i+1}{2^k}\right] \times \left[\frac{j}{2^k},\frac{j+1}{2^k}\right].\]
In practice, we have computed an approximation of this quantity by summing only on the $k\in\llbracket 0,7 \rrbracket$.

\begin{figure}[h]
\begin{center}
\makebox[0.8\textwidth]{\parbox{0.8\textwidth}{%
\begin{minipage}[c]{.49\linewidth}
	\includegraphics[width=\linewidth,trim = .5cm .3cm .6cm .1cm,clip]{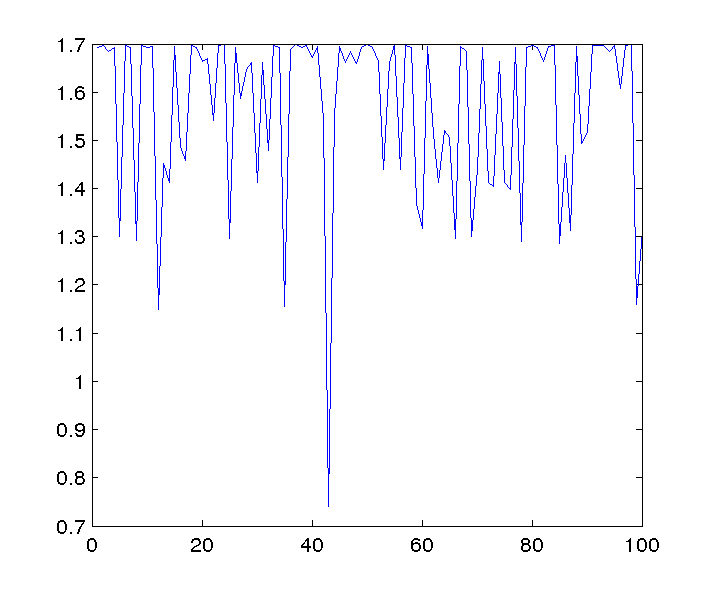}
\end{minipage}\hfill
\begin{minipage}[c]{.49\linewidth}
	\includegraphics[width=\linewidth,trim = .5cm .3cm .6cm .1cm,clip]{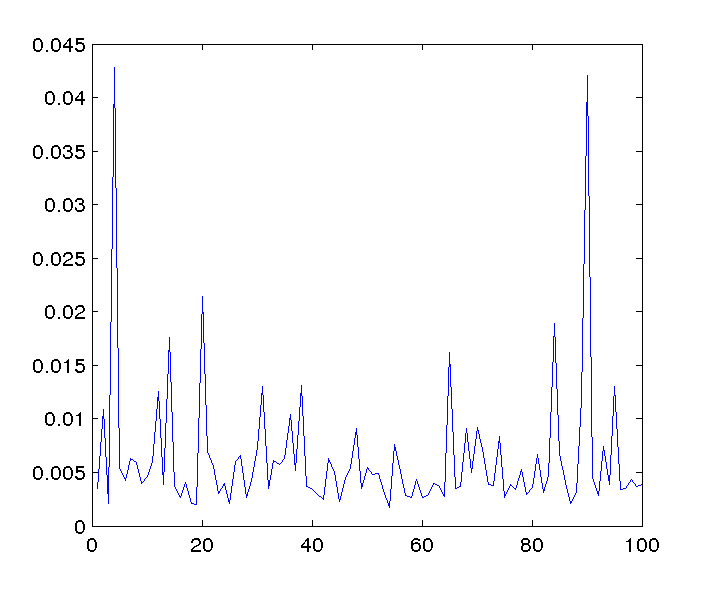}
\end{minipage}
}}
\end{center}
\caption[Simulation of the distance between $\Leb$ and $\mu^{(f_i)_N}_{(1/2,1/2)}$ for 2 examples of conservative diffeomorphisms]{Distance between Lebesgue measure and the measure $\mu^{(f_i)_N}_{(1/2,1/2)}$ depending on $N$ for $f_1$ (left) and $f_2$ (right), on the grids $E_N$ with $N=2^{20}+k$, $k=1,\cdots,100$.}\label{GrafDistLebPhys}
\end{figure}

\bigskip

\begin{figure}[ht]
\begin{minipage}[c]{.31\linewidth}
	\includegraphics[height=4.8cm,trim = 1.5cm .95cm 2.8cm .5cm,clip]{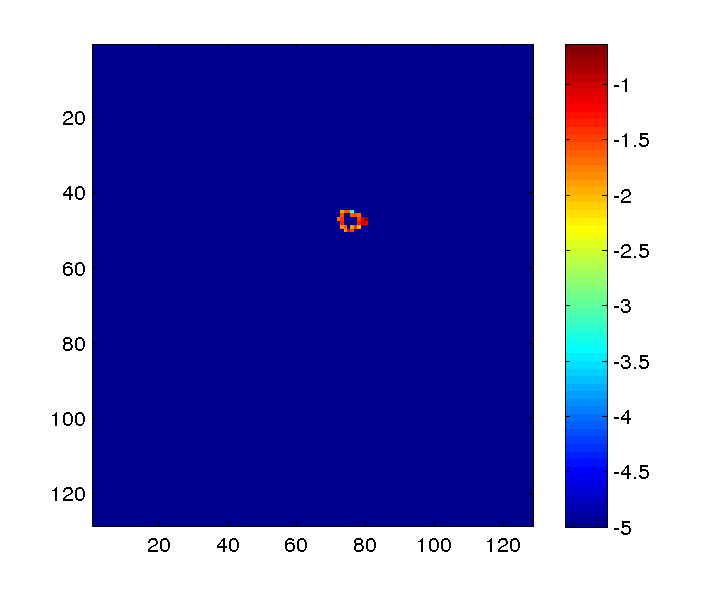}
\end{minipage}\hfill
\begin{minipage}[c]{.31\linewidth}
	\includegraphics[height=4.8cm,trim = 1.5cm .95cm 2.8cm .5cm,clip]{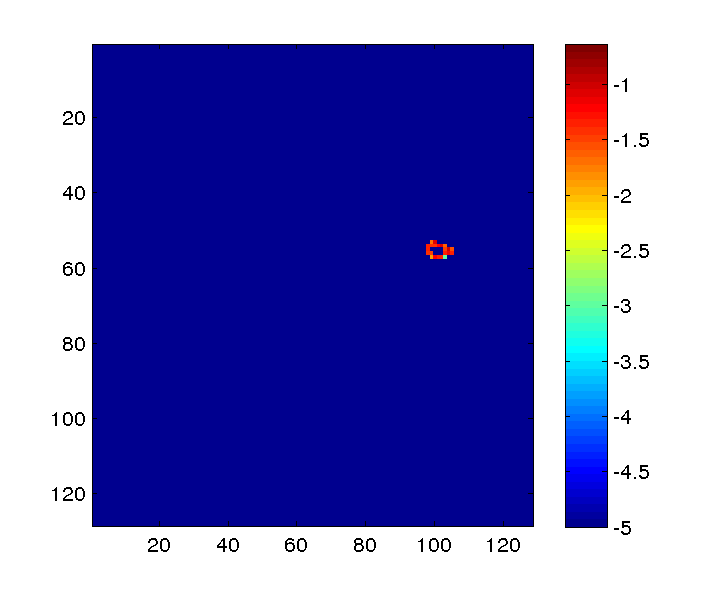}
\end{minipage}\hfill
\begin{minipage}[c]{.37\linewidth}
	\includegraphics[height=4.8cm,trim = 1.5cm .95cm 1cm .5cm,clip]{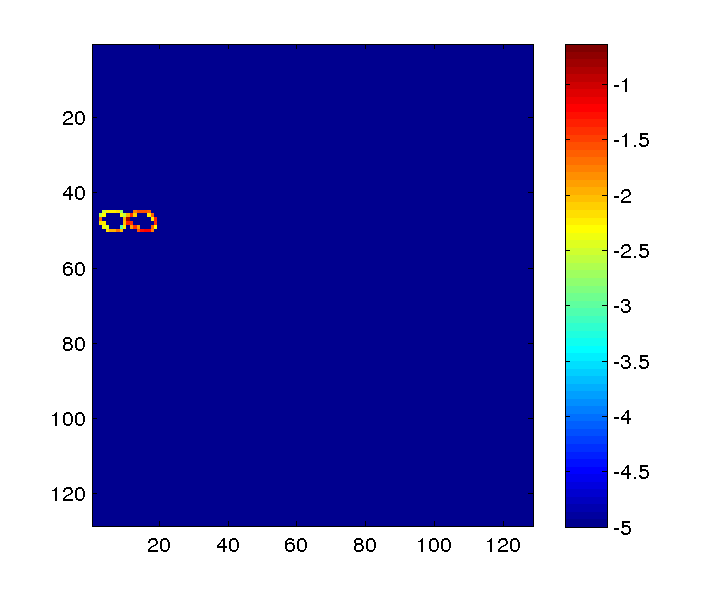}
\end{minipage}

\begin{minipage}[c]{.31\linewidth}
	\includegraphics[height=4.8cm,trim = 1.5cm .95cm 2.8cm .5cm,clip]{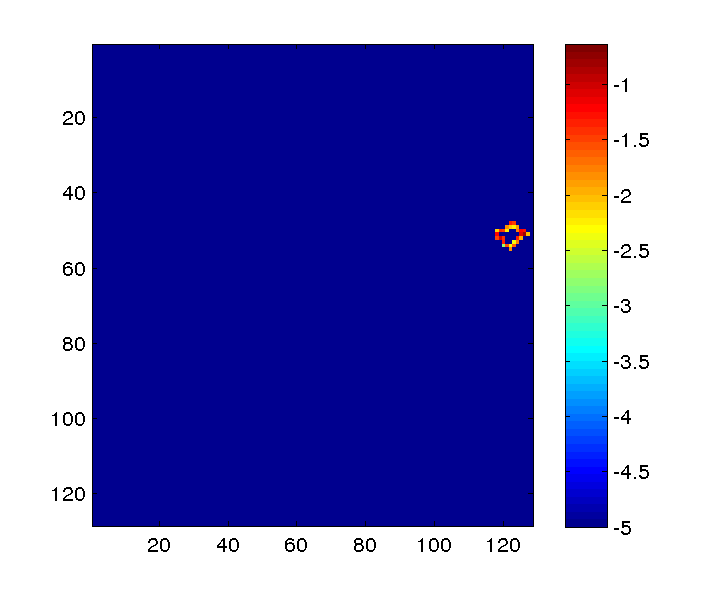}
\end{minipage}\hfill
\begin{minipage}[c]{.31\linewidth}
	\includegraphics[height=4.8cm,trim = 1.5cm .95cm 2.8cm .5cm,clip]{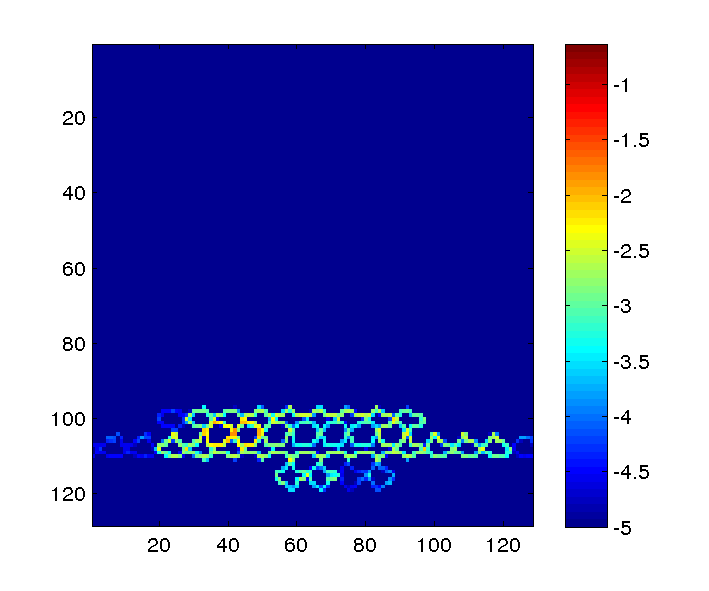}
\end{minipage}\hfill
\begin{minipage}[c]{.37\linewidth}
	\includegraphics[height=4.8cm,trim = 1.5cm .95cm 1cm .5cm,clip]{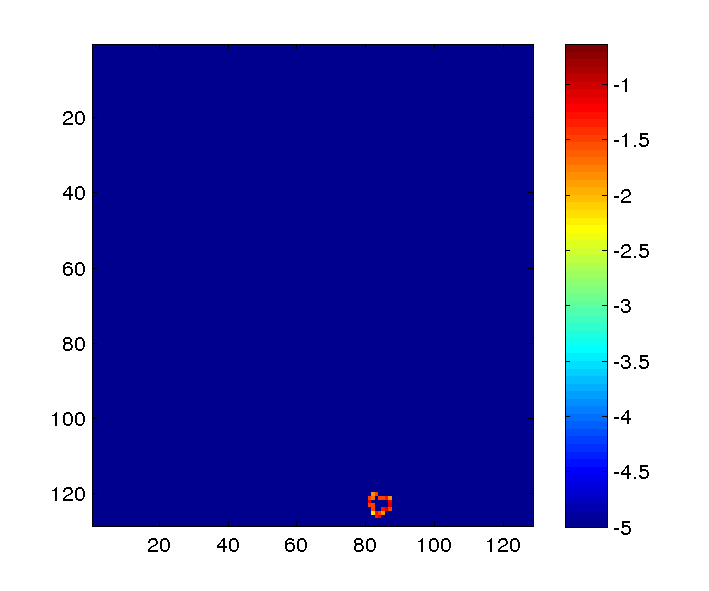}
\end{minipage}

\caption[Simulations of $\mu^{(f_1)_N}_x$ on the grids $E_N$, with $N=2^{20}+i$, $i=1,\cdots,9$]{Simulations of invariant measures $\mu^{(f_1)_N}_x$ on the grids $E_N$, with $N=2^{20}+i$, $i=1,\cdots,6$ and $x=(1/2,1/2)$ (from left to right and top to bottom).}\label{MesPhysIdC1}
\end{figure}

\begin{figure}[ht]
\begin{minipage}[c]{.31\linewidth}
	\includegraphics[height=4.8cm,trim = 1.5cm .95cm 2.8cm .5cm,clip]{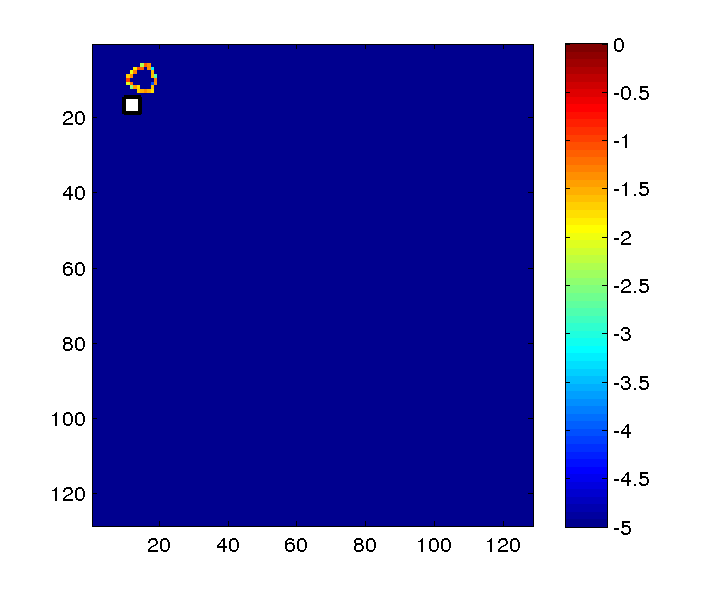}
\end{minipage}\hfill
\begin{minipage}[c]{.31\linewidth}
	\includegraphics[height=4.8cm,trim = 1.5cm .95cm 2.8cm .5cm,clip]{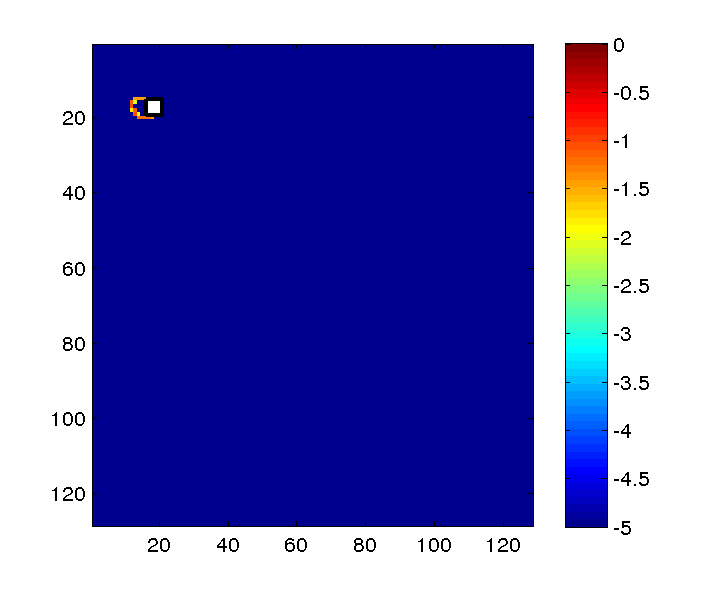}
\end{minipage}\hfill
\begin{minipage}[c]{.37\linewidth}
	\includegraphics[height=4.8cm,trim = 1.5cm .95cm 1cm .5cm,clip]{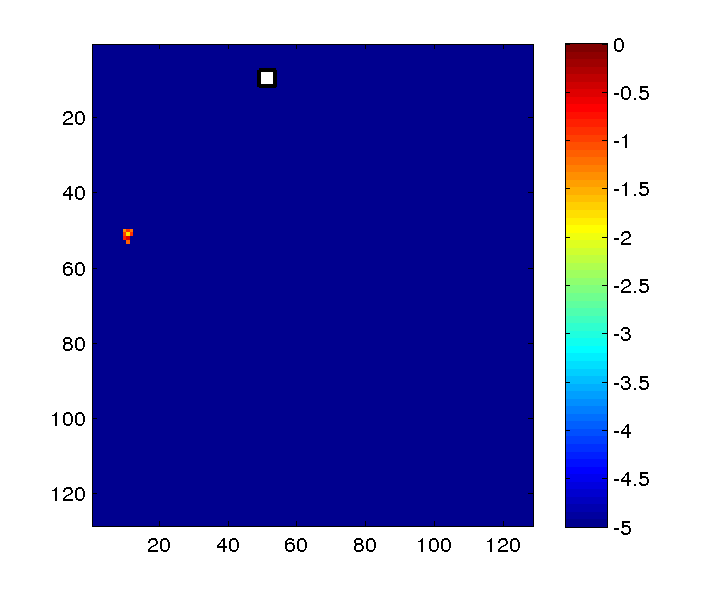}
\end{minipage}

\begin{minipage}[c]{.31\linewidth}
	\includegraphics[height=4.8cm,trim = 1.5cm .95cm 2.8cm .5cm,clip]{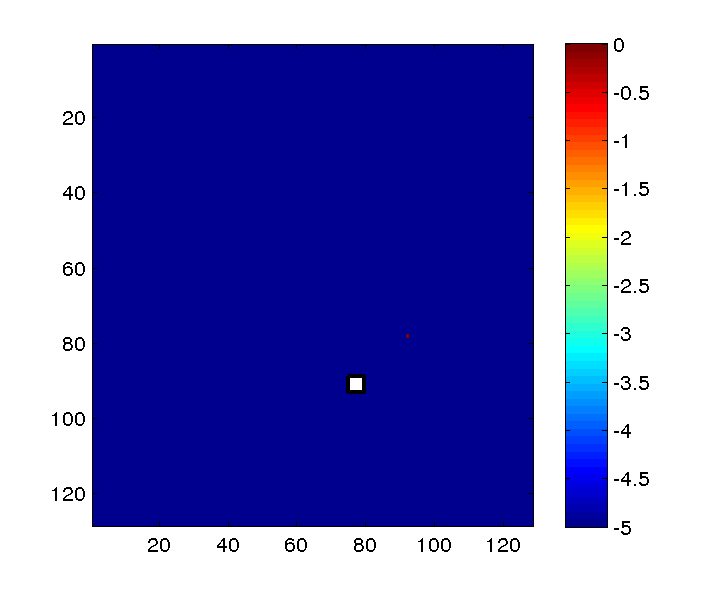}
\end{minipage}\hfill
\begin{minipage}[c]{.31\linewidth}
	\includegraphics[height=4.8cm,trim = 1.5cm .95cm 2.8cm .5cm,clip]{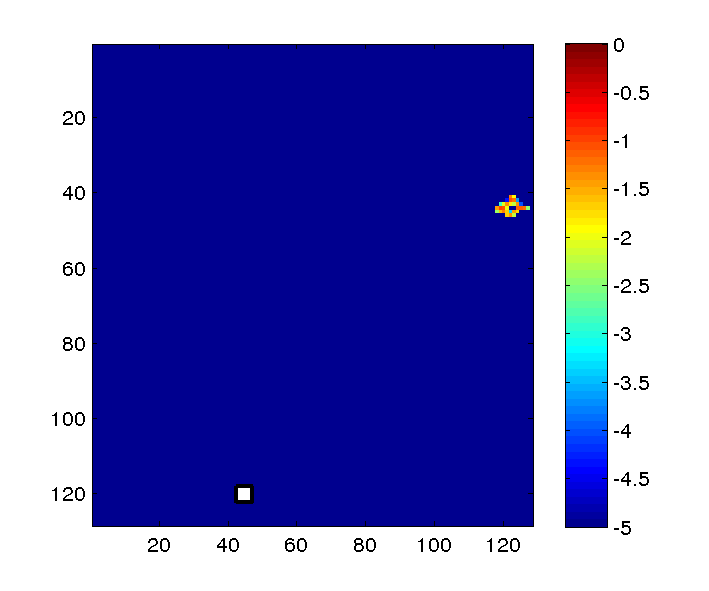}
\end{minipage}\hfill
\begin{minipage}[c]{.37\linewidth}
	\includegraphics[height=4.8cm,trim = 1.5cm .95cm 1cm .5cm,clip]{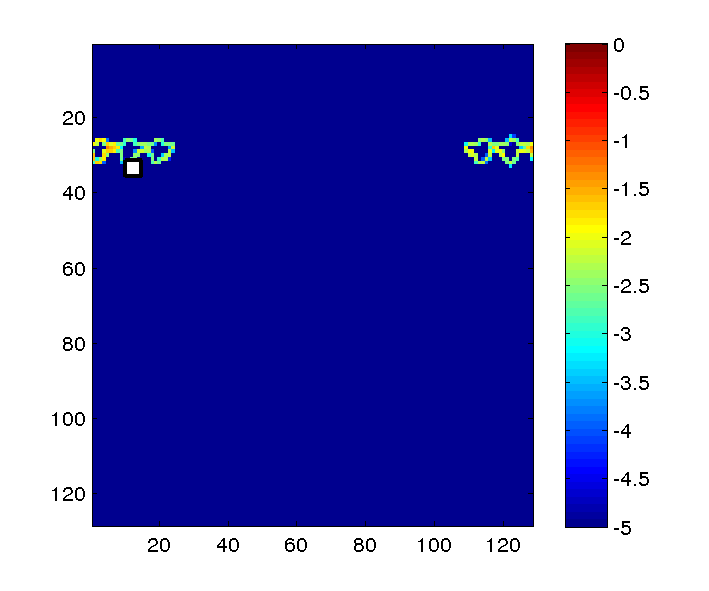}
\end{minipage}
\caption[Simulations of $\mu^{(f_1)_N}_x$ on the grid $E_N$, with $N=2^{23}$ and a random point $x$]{Simulations of invariant measures $\mu^{(f_1)_N}_x$ on the grid $E_N$, with $N=2^{23}$, and $x$ a random point of $\T^2$, represented by the black and white box. The behaviour observed on the top left picture is the most frequent, but we also observe other kind of measures: for example, the measures has a very small support like on the bottom left picture on about 10 of the 100 random draws we have made; we even see appearing the strange behaviour of the last picture once.}\label{MesPhysIdC1Pt}
\end{figure}

In the case of the diffeomorphism $f_1$, which is close to the identity, we observe a strong variation of the measure $\mu^{(f_1)_N}_x$ depending on $N$ (left of Figure~\ref{GrafDistLebPhys} and Figure~\ref{MesPhysIdC1}). More precisely, for 7 on the 9 orders of discretization represented on Figure~\ref{MesPhysIdC1}, these measures seem to be supported by a small curve; for $N = 2^{20}+3$, this measure seems to be supported by a figure-8 curve, and for $N = 2^{20}+5$, the support of the measure is quite complicated and looks like an interlaced curve. The fact that the measures $\mu^{(f_1)_N}_x$ strongly depend on $N$ reflects the behaviour predicted by Theorem~\ref{TheoMesPhysDiff}: in theory, for a generic $C^1$ diffeomorphism, the measures $\mu^{f_N}_x$ should accumulate on the whole set of $f$-invariant measures; here we see that these measures strongly depend on $N$ (moreover, we can see on Figure~\ref{GrafDistLebPhys} that on the orders of discretization we have tested, these measures are never close to Lebesgue measure). We have no satisfying explanation to the specific shape of the supports of the measures. When we fix the order of discretization and make vary the starting point $x$, the behaviour is very similar: the measures $\mu^{(f_1)_N}_x$ widely depend on the point $x$ (see Figure~\ref{MesPhysIdC1Pt}). We also remark that increasing the order of discretizations does not make the measures $\mu^{(f_1)_N}_x$ evolve more smoothly.
\bigskip
\bigskip

\begin{figure}[ht]
\begin{minipage}[c]{.31\linewidth}
	\includegraphics[height=4.8cm,trim = 1.5cm .95cm 2.8cm .5cm,clip]{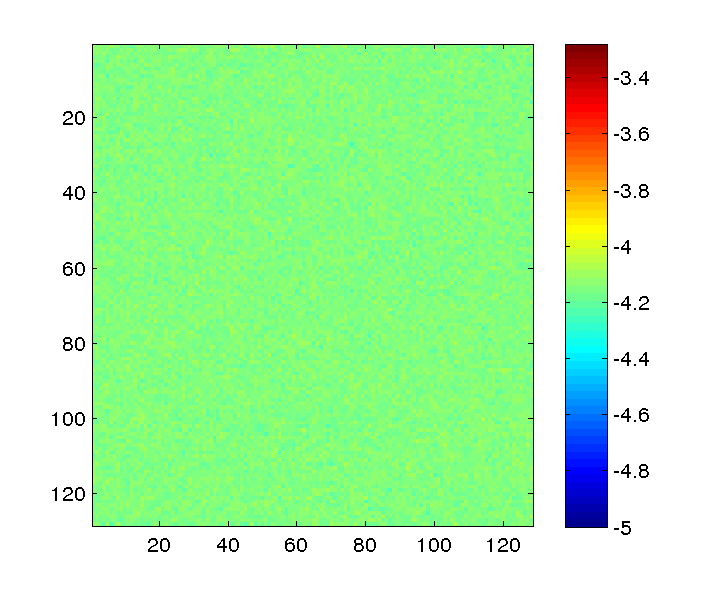}
\end{minipage}\hfill
\begin{minipage}[c]{.31\linewidth}
	\includegraphics[height=4.8cm,trim = 1.5cm .95cm 2.8cm .5cm,clip]{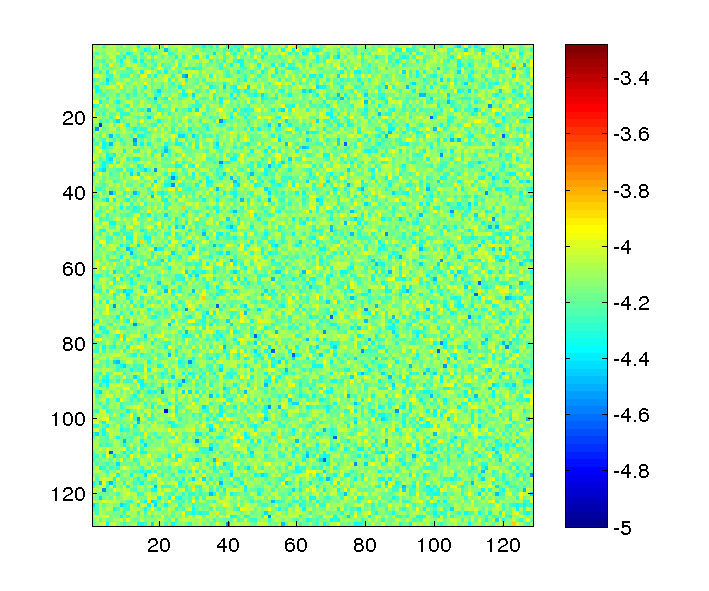}
\end{minipage}\hfill
\begin{minipage}[c]{.37\linewidth}
	\includegraphics[height=4.8cm,trim = 1.5cm .95cm 1cm .5cm,clip]{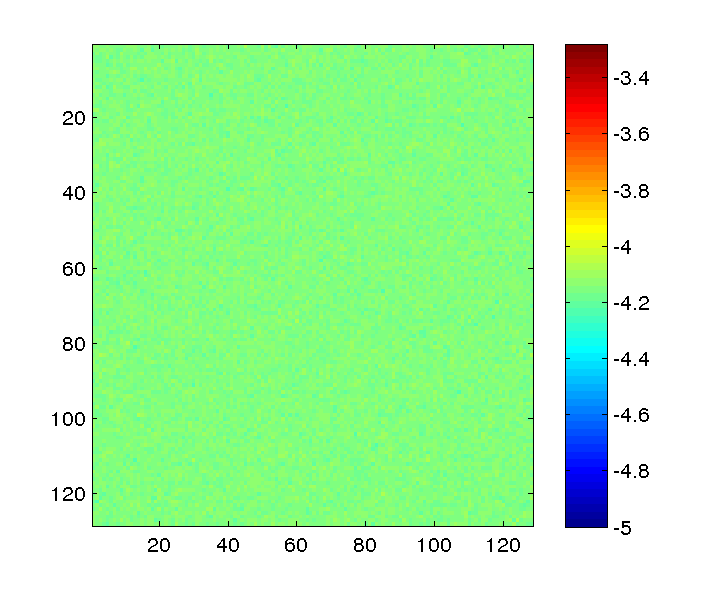}
\end{minipage}

\begin{minipage}[c]{.31\linewidth}
	\includegraphics[height=4.8cm,trim = 1.5cm .95cm 2.8cm .5cm,clip]{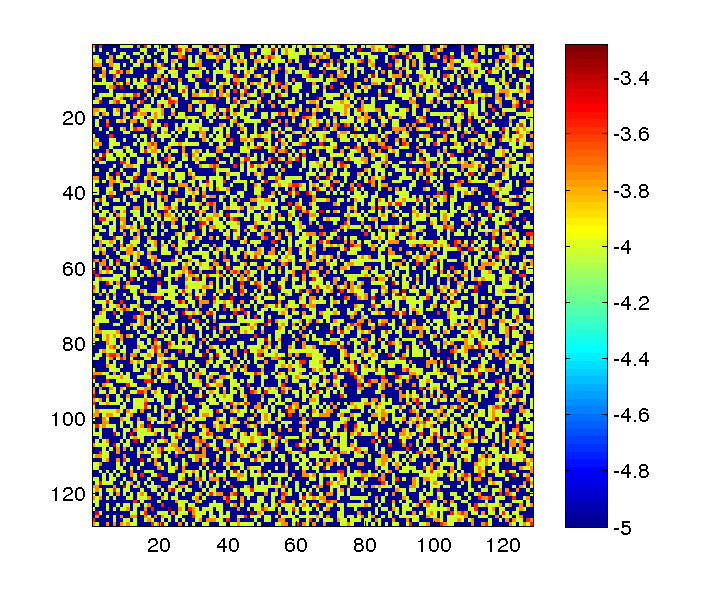}
\end{minipage}\hfill
\begin{minipage}[c]{.31\linewidth}
	\includegraphics[height=4.8cm,trim = 1.5cm .95cm 2.8cm .5cm,clip]{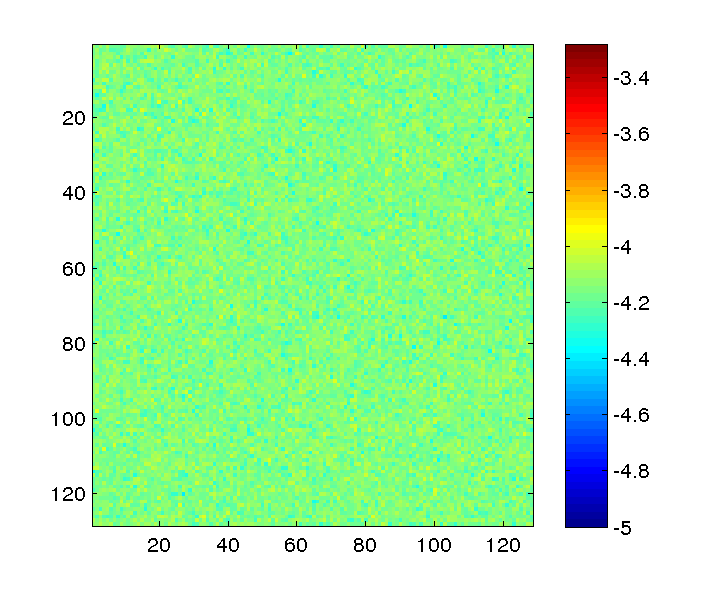}
\end{minipage}\hfill
\begin{minipage}[c]{.37\linewidth}
	\includegraphics[height=4.8cm,trim = 1.5cm .95cm 1cm .5cm,clip]{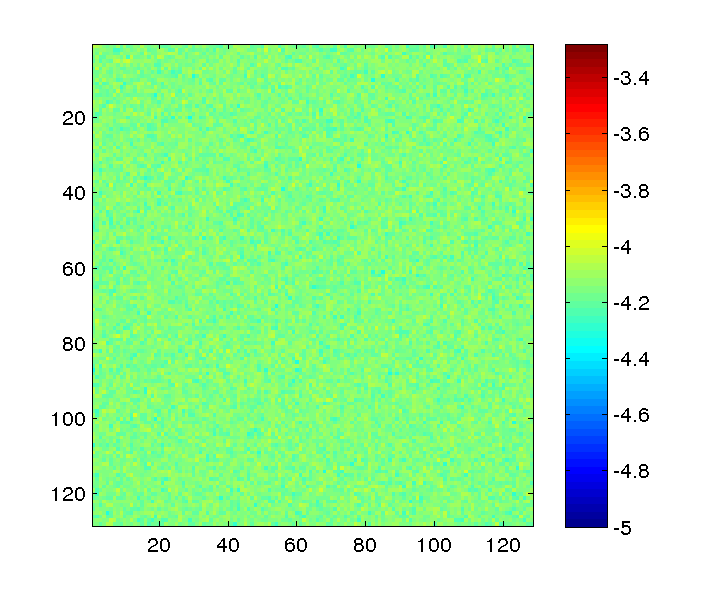}
\end{minipage}

\caption[Simulations of $\mu^{(f_2)_N}_x$ on the grids $E_N$, with $N=2^{20}+i$, $i=1,\cdots,6$]{Simulations of invariant measures $\mu^{(f_2)_N}_x$ on the grids $E_N$,  with $N=2^{20}+i$, $i=1,\cdots,6$ and $x=(1/2,1/2)$ (from left to right and top to bottom).}\label{MesPhysAnoC1}
\end{figure}

\begin{figure}[ht]
\begin{minipage}[c]{.31\linewidth}
	\includegraphics[height=4.8cm,trim = 1.5cm .95cm 2.8cm .5cm,clip]{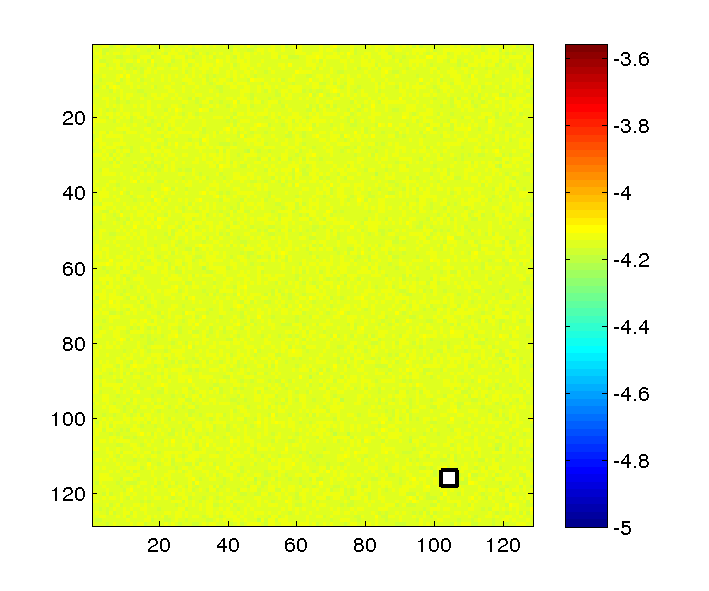}
\end{minipage}
\begin{minipage}[c]{.31\linewidth}
	\includegraphics[height=4.8cm,trim = 1.5cm .95cm 2.8cm .5cm,clip]{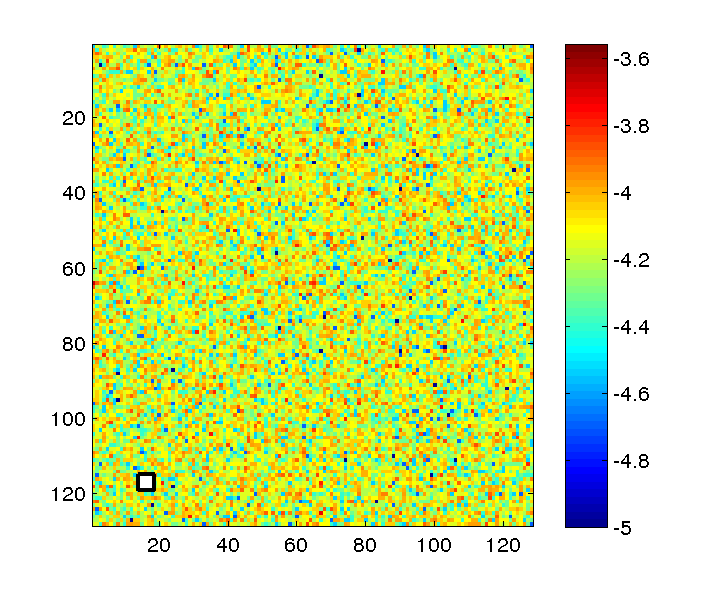}
\end{minipage}\hfill
\begin{minipage}[c]{.37\linewidth}
	\includegraphics[height=4.8cm,trim = 1.5cm .95cm 1cm .5cm,clip]{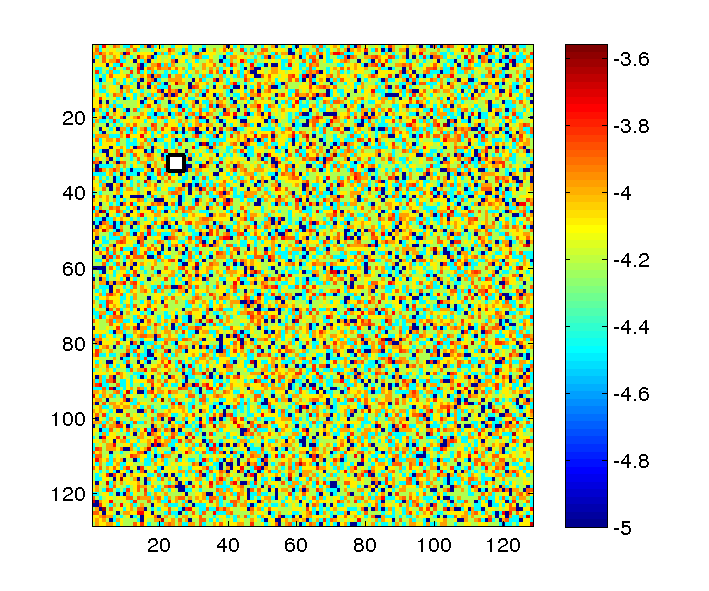}
\end{minipage}

\caption[Simulations of $\mu^{(f_2)_N}_x$ on the grid $E_N$, with $N\simeq 2^{23}$ and a random point $x$]{Simulations of invariant measures $\mu^{(f_2	)_N}_x$ on the grid $E_N$, with $N=2^{23}+1$ (left) and $N=2^{23}+17$ (middle and right), and $x$ a random point of $\T^2$, represented by the black and white box. The behaviour observed on the left picture is the most frequent (for 17 over the 20 orders $N=2^{23}+i$, $i=0,\cdots,19$, all the 100 random draws we have made gave a measure very close to $\Leb$), but seldom we also observe measures further from Lebesgue measure, like what happens for $N=2^{23}+17$ (middle and right), where $99$ over the $100$ random draws of $x$ produce a measure identical to the pictures on the middle, and the other random draw gives a measure a bit more singular with respect to Lebesgue measure (right).}\label{MesPhysAnoC1Pt}
\end{figure}

For $f_2$ (a small $C^1$-perturbation of the linear Anosov map $A$), most of the time, the measures $\mu^{(f_2)_N}_x$ are close to Lebesgue measure, but for one order of discretization $N$ (here, $N = 2^{20}+4$), the measure becomes very different from Lebesgue measure (we can see on the right of Figure~\ref{GrafDistLebPhys} that this phenomenon appears twice when $N\in\llbracket 2^{20}+1,2^{20}+100\rrbracket$). The same phenomenon holds when we fix the order of discretization but change the starting point $x$ (see Figure~\ref{MesPhysAnoC1Pt}), except that the number of apparition of measures that are singular with respect to Lebesgue measure is smaller than in Figure~\ref{MesPhysAnoC1}. Again, we think that this follows from the fact that the orders of discretizations tested are bigger. In this case, the simulations suggest the following behaviour: when the order of discretization $N$ increases, the frequency of apparition of measures $\mu^{(f_2)_N}_x$ far away from Lebesgue measures tends to $0$.

Recall that Addendum~\ref{AddTheoMesPhysDiff} states that if $x$ is fixed, then for a generic $f\in\Diff^1(\T^2,\Leb)$, the measures $\mu_x^{f_N}$ accumulate on the whole set of $f$-invariant measures, but do not say anything about, for instance, the frequency of orders $N$ such that $\mu_x^{f_N}$ is not close to Lebesgue measure. It is natural to think that this frequency depends a lot on $f$; for example that such $N$ are very rare close to an Anosov diffeomorphism and more frequent close to an ``elliptic'' dynamics like the identity. The results of numerical simulations seem to confirm this heuristic.

\subsection[Simulations of the measures $\mu^{f_N}_{\T^2}$]{Simulations of the measures $\mu^{f_N}_{\T^2}$ for conservative torus diffeomorphisms}

We now present the results of numerical simulations of the measures $\mu_{\T^2}^{f_N}$. Recall that the measure $\mu_{\T^2}^{f_N}$ is supported by the union of periodic orbits of $f_N$, and is such that the total measure of each periodic orbit is equal to the cardinality of its basin of attraction.

First, we simulate a conservative diffeomorphism $g_1$ which is close to the identity in the $C^1$ topology. We have chosen $g_1 = Q\circ P$, where
\[P(x,y) = \big(x,y+p(x)\big)\quad\text{and}\quad Q(x,y) = \big(x+q(y),y\big),\]
with
\[p(x) = \frac{1}{209}\cos(2\pi\times 17x)+\frac{1}{271}\sin(2\pi\times 27x)-\frac{1}{703}\cos(2\pi\times 35x),\]
\[q(y) = \frac{1}{287}\cos(2\pi\times 15y)+\frac{1}{203}\sin(2\pi\times 27y)-\frac{1}{841}\sin(2\pi\times 38y).\]

We have also simulated the conservative diffeomorphism $g_2 = g_1\circ A$, with $A$ the standard Anosov automorphism
\[A = \begin{pmatrix} 2 & 1 \\ 1 & 1 \end{pmatrix},\]
thus $g_2$ is a small $C^1$ perturbation of $A$; in particular the theory asserts that it is topologically conjugated to $A$. We can test whether this property can be observed on simulations or not.
\bigskip

\begin{figure}[h!]
\begin{center}
\makebox[0.8\textwidth]{\parbox{0.8\textwidth}{%
\begin{minipage}[c]{.49\linewidth}
	\includegraphics[width=\linewidth,trim = .5cm .3cm .6cm .1cm,clip]{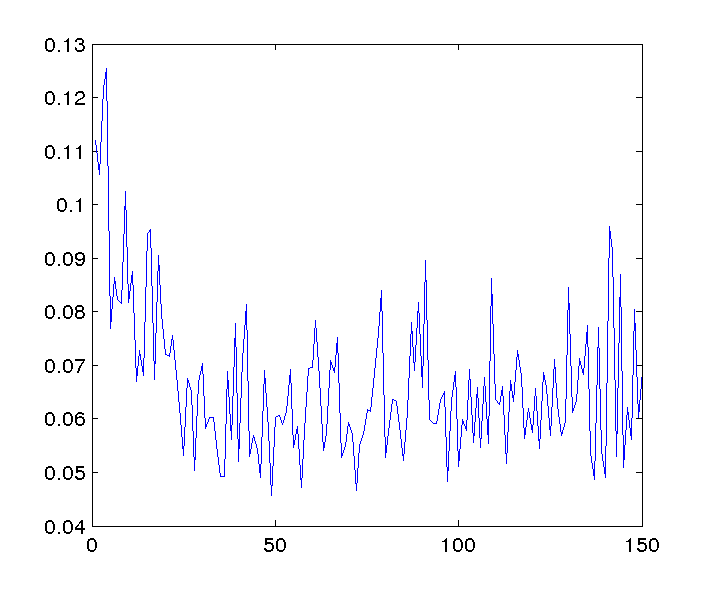}
\end{minipage}\hfill
\begin{minipage}[c]{.49\linewidth}
	\includegraphics[width=\linewidth,trim = .5cm .3cm .6cm .1cm,clip]{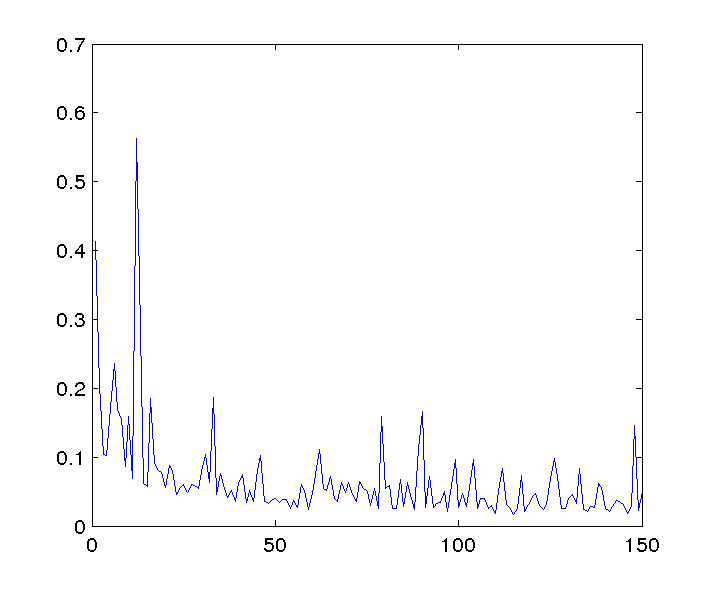}
\end{minipage}
}}\end{center}
\caption[Simulation of the distance between $\Leb$ and $\mu^{(g_i)_N}_{\T^2}$ for 2 examples of conservative diffeomorphisms]{Distance between Lebesgue measure and the measure $\mu^{(g_i)_N}_{\T^2}$ depending on $N$ for $g_1$ (left) and $g_2$ (right), on the grids $E_N$ with $N=128k$, $k=1,\cdots,150$.}\label{GrafDistLebC1Cons}
\end{figure}

For $g_1$, the distance $d(\mu^{f_N}_{\T^2},\Leb)$ is quite quickly smaller than $0.1$, and oscillates between $0.05$ and $0.1$ from $N=128\times 30$. It is not clear if in this case, the sequence of measures $(\mu^{f_N}_{\T^2})_N$ converge towards Lebesgue measure or not (while for the $C^0$ perturbation of the identity, it is clear that these measures do not converge to anything, see \cite{Guih-discr}). The distance $d(\mu^{f_N}_{\T^2},\Leb)$ even seem to increase slowly (in average --- there are a lot of oscillations) from $N=50\times 128$. We have the same kind of conclusion for $g_2$: by looking at Figure~\ref{GrafDistLebC1Cons}, we can not decide if the sequence of measures $(\mu^{f_N}_{\T^2})$ seem to tend to Lebesgue measure or not.
\bigskip

\begin{figure}[ht]
\begin{minipage}[c]{.31\linewidth}
	\includegraphics[height=4.8cm,trim = 1.5cm .95cm 2.8cm .5cm,clip]{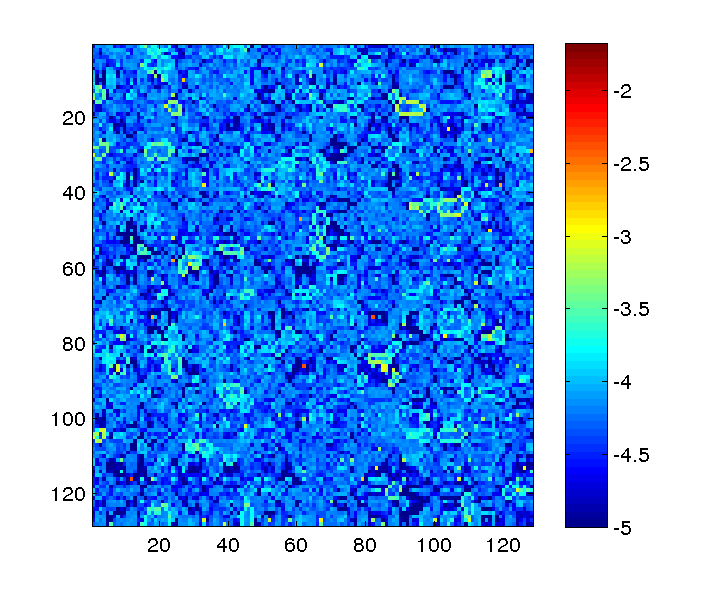}
\end{minipage}\hfill
\begin{minipage}[c]{.31\linewidth}
	\includegraphics[height=4.8cm,trim = 1.5cm .95cm 2.8cm .5cm,clip]{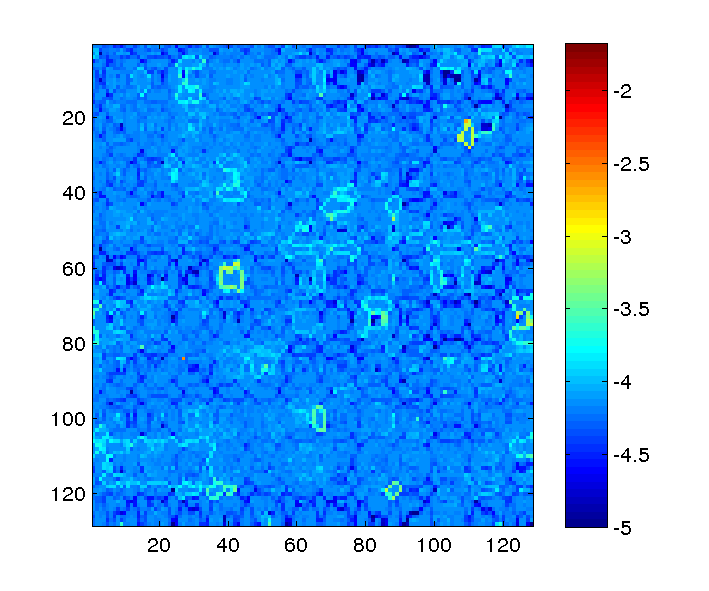}
\end{minipage}\hfill
\begin{minipage}[c]{.37\linewidth}
	\includegraphics[height=4.8cm,trim = 1.5cm .95cm 1cm .5cm,clip]{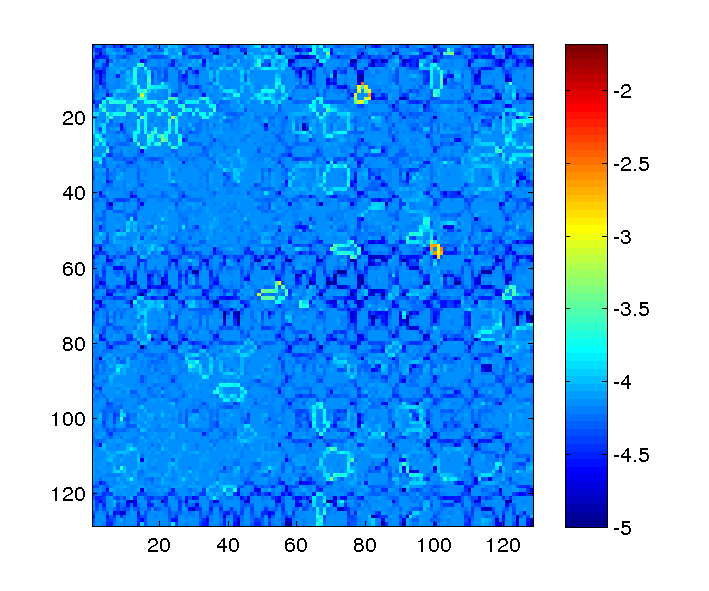}
\end{minipage}
\caption[Simulations of $\mu^{(g_1)_N}_{\T^2}$ on the grids $E_N$, with $N=2^k$, $k= 10,14,15$]{Simulations of invariant measures $\mu^{(g_1)_N}_{\T^2}$ on the grids $E_N$, with $N=2^k$, $k= 10,14,15$ (from left to right and top to bottom).}\label{MesC1IdCons2p}
\end{figure}

The behaviour of the computed invariant measures $\mu^{(g_1)_N}_{\T^2}$, where $g_1$ is a small $C^1$ perturbation of the identity, is way smoother than in the $C^0$ case (compare Figure~\ref{MesC1IdCons2p} with \cite{Guih-discr}). Indeed, the measure $\mu^{(g_1)_N}_{\T^2}$ has quickly a big component which is close to Lebesgue measure: the images contain a lot of light blue. Thus, we could be tempted to conclude that these measures converge to Lebesgue measure. However, there are still little regions that have a big measure: in the example of Figure~\ref{MesC1IdCons2p}, it seems that there are invariant curves that attract a lot of the points of the grid (as can also be observed on Figure~\ref{MesPhysIdC1}). We have no explanation to this phenomenon, and we do not know if it still occurs when the order of discretization is very large.
\bigskip

\begin{figure}[ht]
\begin{minipage}[c]{.31\linewidth}
	\includegraphics[height=4.8cm,trim = 1.5cm .95cm 2.8cm .5cm,clip]{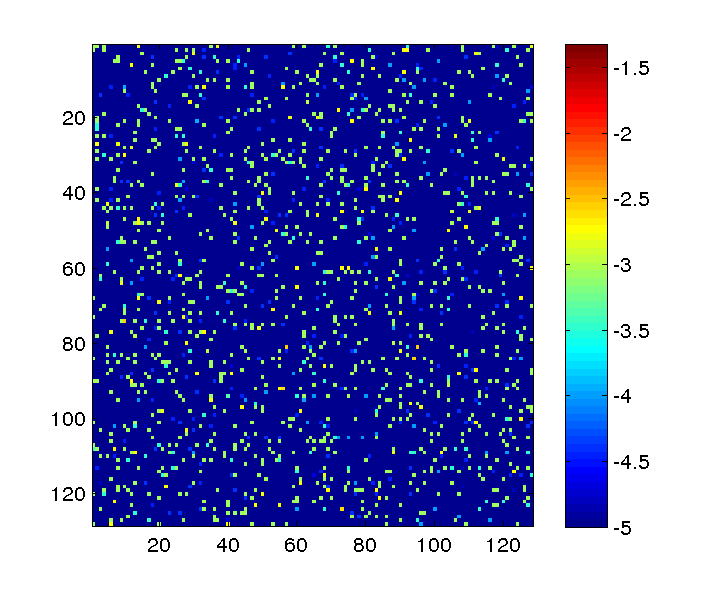}
\end{minipage}\hfill
\begin{minipage}[c]{.31\linewidth}
	\includegraphics[height=4.8cm,trim = 1.5cm .95cm 2.8cm .5cm,clip]{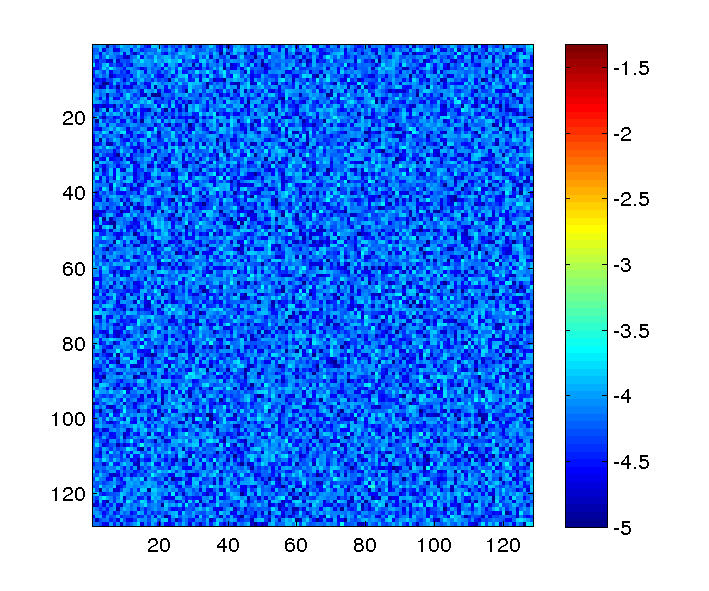}
\end{minipage}\hfill
\begin{minipage}[c]{.37\linewidth}
	\includegraphics[height=4.8cm,trim = 1.5cm .95cm 1cm .5cm,clip]{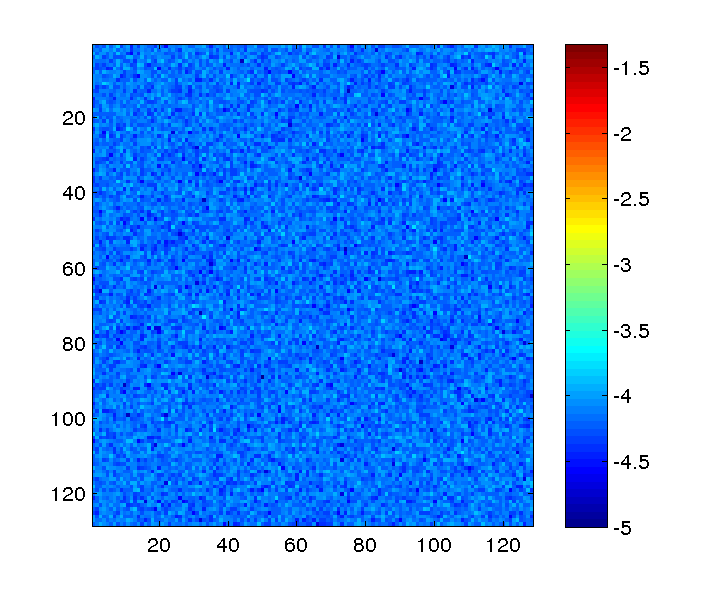}
\end{minipage}
\caption[Simulations of $\mu^{(g_2)_N}_{\T^2}$ on the grids $E_N$, with $N=2^k$, $k= 10,14,,15$]{Simulations of invariant measures $\mu^{(g_2)_N}_{\T^2}$ on the grids $E_N$, with $N=2^k$, $k= 10,14,15$ (from left to right and top to bottom).}\label{MesC1AnoCons2p}
\end{figure}

\begin{figure}[ht]
\begin{minipage}[c]{.31\linewidth}
	\includegraphics[height=4.8cm,trim = 1.5cm .95cm 2.8cm .5cm,clip]{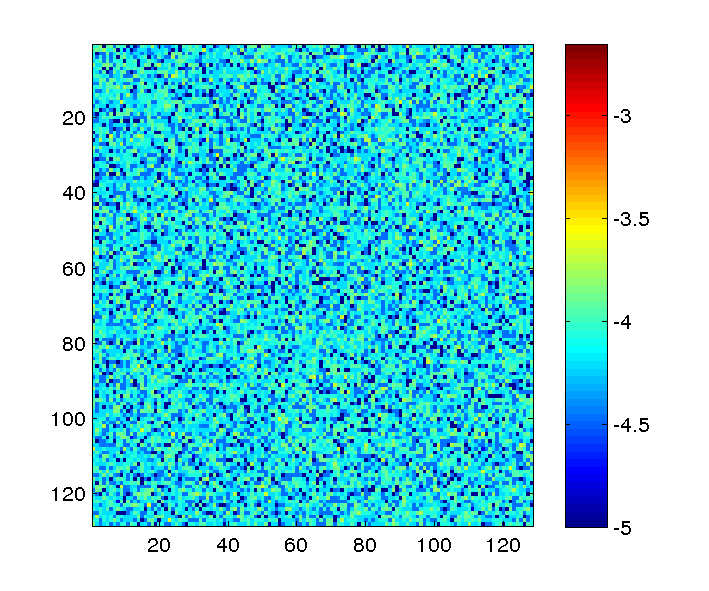}
\end{minipage}\hfill
\begin{minipage}[c]{.31\linewidth}
	\includegraphics[height=4.8cm,trim = 1.5cm .95cm 2.8cm .5cm,clip]{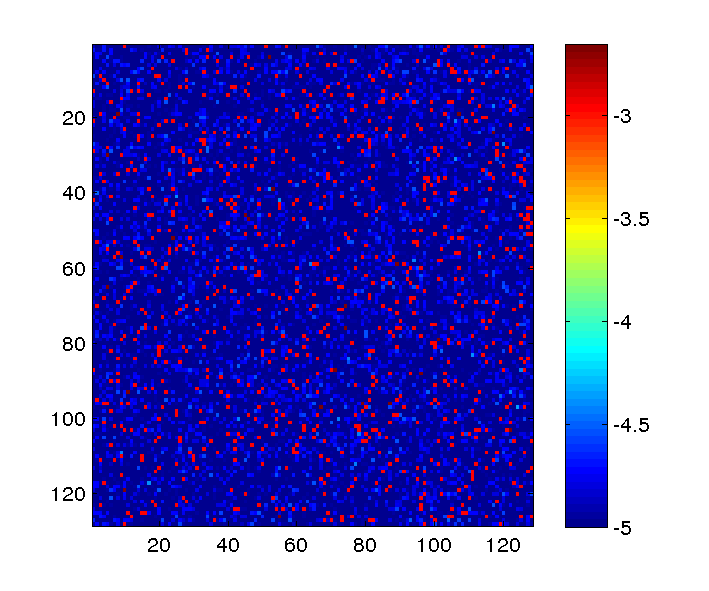}
\end{minipage}\hfill
\begin{minipage}[c]{.37\linewidth}
	\includegraphics[height=4.8cm,trim = 1.5cm .95cm 1cm .5cm,clip]{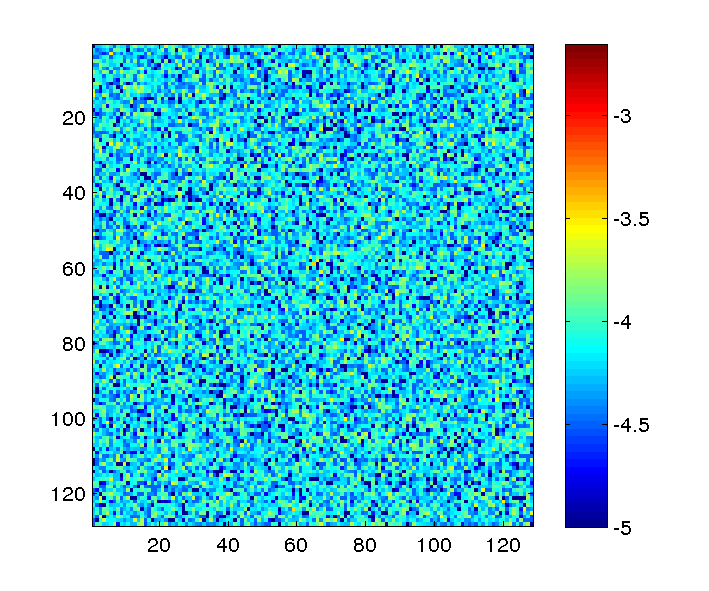}
\end{minipage}

\caption[Simulations of $\mu^{(g_2)_N}_{\T^2}$ on the grids $E_N$, with $N=11\,519, \cdots, 11\,521$]{Simulations of invariant measures $\mu^{(g_2)_N}_{\T^2}$ on the grids $E_N$, with $N=11\,516, \cdots, 11\,524$ (from left to right and top to bottom).}\label{MesC1AnoConsSerie}
\end{figure}

For the discretizations of $g_2$, the simulations on grids of size $2^k\times 2^k$ might suggest that the measures $\mu^{(g_2)_N}_{\T^2}$ tend to Lebesgue measure (Figure~\ref{MesC1AnoCons2p}). Actually, when we perform a lot of simulations, we realize that there are also big variations of the behaviour of the measures (Figure \ref{MesC1AnoConsSerie}): the measure is often well distributed in the torus, and sometimes quite singular with respect to Lebesgue measure (as it can be seen in Figure \ref{GrafDistLebC1Cons}). This behaviour is almost identical to that observed in the $C^0$ case in the neighbourhood of $A$ (see \cite{Guih-discr}).

\appendix

\section{A more general setting where the theorems are still true}\label{AddendSett}

Here, we give weaker assumptions under which the theorems of this paper are still true: the framework ``torus $\T^n$ with grids $E_N$ and Lebesgue measure'' could be seen as a little too restrictive.
\bigskip

So, we take a compact smooth manifold $M$ (possibly with boundary) and choose a partition $M_1,\cdots,M_k$ of $M$ into closed sets\footnote{That is, $\bigcup_i M_i = M$, and for $i\neq j$, the intersection between the interiors of $M_i$ and $M_j$ are empty.} with smooth boundaries, such that for every $i$, there exists a chart $\varphi_i : M_i\to \R^n$. We endow $\R^n$ with the euclidean distance, which defines a distance on $M$ \emph{via} the charts $\phi_i$ (this distance is not necessarily continuous). From now, we study what happens on a single chart, as what happens on the neighbourhoods of the boundaries of these charts ``counts for nothing'' from the Lebesgue measure viewpoint.

Finally, we suppose that the uniform measures on the grids $E_N = \bigcup_i E_{N,i}$ converge to a smooth measure $\lambda$ on $M$ when $N$ goes to infinity.

We also need that the grids behave locally as the canonical grids on the torus.

For every $i$, we choose a sequence $(\kappa_{N,i})_N$ of positive real numbers such that $\kappa_{N,i}\underset{N\to +\infty}{\longrightarrow} 0$. This defines a sequence $E_{N,i}$ of grids on the set $M_i$ by $E_{N,i} = \varphi_i^{-1} (\kappa_{N,i}\Z^n)$. Also, the canonical projection $\pi : \R^n\to \Z^n$ (see Definition~\ref{DefDiscrLin}) allows to define the projection $\pi_{N,i}$, defined as the projection on $\kappa_{N,i}\Z^n$ in the coordinates given by $\varphi_i$:
\[\begin{array}{rcl}
\pi_{N,i} : & M_i & \longrightarrow E_{N,i}\\
            & x   & \longmapsto \varphi_i^{-1}\Big(\kappa_{N,i}\pi\big(\kappa_{N,i}^{-1}\varphi_i(x)\big)\Big).
\end{array}\]

We easily check that under these conditions, Theorem~\ref{TheoA} is still true, that is if we replace the torus $\T^n$ by $M$, the uniform grids by the grids $E_N$, the canonical projections by the projections $\pi_{N,i}$, and Lebesgue measure by the measure $\lambda$.

\section{Proof of Lemma~\ref{ConjPrincip}}\label{ZELINEAR}

Let us summarize the different notations we will use throughout this section. We will denote by $0^k$\index{$0^k$} the origin of the space $\R^k$, and $W^k = ]-1/2,1/2]^{nk}$ (unless otherwise stated). In this section, we will denote $B_R = B_\infty(0,R)$ and $D_c(E)$\index{$D_c$} the density of a ``continuous'' set $E\subset \R^n$, defined as (when the limit exists)
\[D_c(E) = \lim_{R\to+\infty} \frac{\Leb(B_R\cap E)}{\Leb(B_R)},\]
while for a discrete set $E\subset \R^n$, the notation $D_d(E)$\index{$D_d$} will indicate the discrete density of $E$, defined as (when the limit exists)
\[D_d(E) = \lim_{R\to+\infty} \frac{\card(B_R\cap E)}{\card(B_R\cap \Z^n)},\]

We will consider $(A_k)_{k\ge 1}$ a sequence of matrices of $SL_n(\R)$, and denote
\[\Gamma_k = (\widehat{A_k}\circ\dots\circ\widehat{A_1}) (\Z^n).\]
Also, $\Lambda_k$ will be the lattice $M_{A_1,\cdots,A_k} \Z^{n(k+1)}$, with
\begin{equation}\label{DefMat}
M_{A_1,\cdots,A_k} = \left(\begin{array}{ccccc}
A_1 & -\Id &        &        & \\
    & A_2  & -\Id   &        & \\
    &      & \ddots & \ddots & \\
    &      &        & A_k    & -\Id\\
    &      &        &        & \Id
\end{array}\right)\in M_{n(k+1)}(\R),
\end{equation}
and $\widetilde \Lambda_k$ will be the lattice $\widetilde M_{A_1,\cdots,A_k} \Z^{nk}$, with
\begin{equation*}
\widetilde M_{A_1,\cdots,A_k} = \left(\begin{array}{ccccc}
A_1 & -\Id &        &         & \\
    & A_2  & -\Id   &         & \\
    &      & \ddots & \ddots  & \\
    &      &        & A_{k-1} & -\Id\\
    &     &        &          & A_k
\end{array}\right)\in M_{nk}(\R).
\end{equation*}
Finally, we will denote
\[\overline\tau^k(A_1,\cdots,A_k) = D_c\left( W^{k+1} + \Lambda_k \right)\]
the \emph{mean rate of injectivity in time $k$} of $A_1,\cdots,A_k$.

\subsection[A geometric viewpoint to compute the rate of injectivity]{A geometric viewpoint to compute the rate of injectivity in arbitrary times}

We begin by motivating the introduction of model sets by giving an alternative construction of the image sets $(\widehat{A_k}\circ\dots\circ\widehat{A_1}) (\Z^n)$ using this formalism.

Let $A_1,\cdots,A_k\in GL_n(\R)$, then\index{$\Gamma_k$}
\begin{align}
\Gamma_k & = (\widehat{A_k}\circ\dots\circ\widehat{A_1}) (\Z^n)\nonumber\\
         & = \big\{p_2(\lambda)\mid \lambda_k\in \Lambda_k,\, p_1(\lambda)\in W^k\big\}\nonumber\\
				 & = p_2\Big(\Lambda \cap \big(p_1^{-1}(W^k)\big)\Big),\label{CalcGamma}
\end{align}
with $p_1$ the projection on the $nk$ first coordinates and $p_2$ the projection on the $n$ last coordinates. This allows us to see the set $\Gamma_k$ as a model set.

Here, we suppose that the set $p_1(\Lambda_k)$ is dense (thus, equidistributed) in the image set $\im p_1$ (note that this condition is generic among the sequences of invertible linear maps). In particular, the set $\{p_2(\gamma)\mid \gamma\in\Lambda_k\}$ is equidistributed in the window $W^k$.

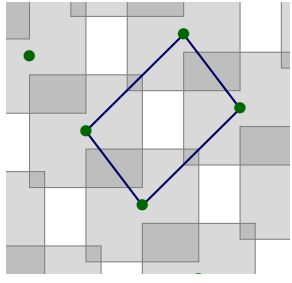
\begin{figure}[t]
\begin{center}
\begin{tikzpicture}[scale=1.5]
\clip (-1.2,-.6) rectangle (1.3,1.8);
\foreach\i in {-1,...,2}{
\foreach\j in {-1,...,3}{
\fill[color=gray,opacity = .3] (.866*\i-.5*\j-.5,.859*\i+.659*\j-.5) rectangle (.866*\i-.5*\j+.5,.859*\i+.659*\j+.5);
\draw[color=gray] (.866*\i-.5*\j-.5,.859*\i+.659*\j-.5) rectangle (.866*\i-.5*\j+.5,.859*\i+.659*\j+.5);
}}
\draw[color=blue!40!black,thick] (0,0) -- (.866,.859) -- (.366,1.518) -- (-.5,.659) -- cycle;
\foreach\i in {-2,...,2}{
\foreach\j in {-2,...,2}{
\draw[color=green!40!black] (.866*\i-.5*\j,.859*\i+.659*\j) node {$\bullet$};
}}
\end{tikzpicture}
\caption{Geometric construction to compute the rate of injectivity: the green points are the elements of $\Lambda$, the blue parallelogram is a fundamental domain of $\Lambda$ and the grey squares are centred on the points of $\Lambda$ and have radii $1/2$. The rate of injectivity is equal to the area of the intersection between the union of the grey squares and the blue parallelogram.}\label{tourp}
\end{center}
\end{figure}

The following property makes the link between the density of $\Gamma_k$ --- that is, the rate of injectivity of $A_1,\cdots,A_k$ --- and the density of the union of unit cubes centred on the points of the lattice $\Lambda_k$ (see Figure~\ref{tourp}). This formula seems to be very specific to the model sets defined by the matrix $M_{A_1,\cdots,A_k}$ and the window $W^k$, it is unlikely that it can be generalized to other model sets.

\begin{prop}\label{CalculTauxModel}
For a generic sequence of matrices $(A_k)_k$ of $SL_n(\R)$, we have
\[D_d(\Gamma_k) = D_c\left( W^k + \widetilde\Lambda_k \right) = \overline\tau^k(A_1,\cdots,A_k).\]
\end{prop}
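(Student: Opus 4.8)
The plan is to reduce the density $D_d(\Gamma_k)$, which lives in the ``physical'' space $\R^n$, to a volume density in the ``internal'' space $\R^{nk}$, and then to identify the latter with $\overline\tau^k$ by a slicing argument. First I would make the model-set description \eqref{CalcGamma} completely explicit: writing $\iota\colon\R^n\hookrightarrow\R^{nk}$ for the inclusion onto the last block of coordinates, the block form of $M_{A_1,\cdots,A_k}$ gives $p_1(\Lambda_k)=\widetilde\Lambda_k+\iota(\Z^n)$ and, for fixed $t\in\Z^n$, $p_2^{-1}(t)\cap\Lambda_k=\big(\widetilde\Lambda_k-\iota(t)\big)\times\{t\}$. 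Substituting into \eqref{CalcGamma} yields, for $y\in\Z^n$, the equivalence $y\in\Gamma_k\iff\iota(y)\in W^k+\widetilde\Lambda_k$; in other words $\Gamma_k$ is the model set \eqref{CalcGamma} but with \emph{non-injective} physical projection $p_2$, and is governed by the window read modulo the ``kernel lattice'' $\widetilde\Lambda_k$. Consequently $D_d(\Gamma_k)$ is the asymptotic proportion of $y\in\Z^n$ whose image $\iota(y)$ falls into the $\widetilde\Lambda_k$-periodic set $W^k+\widetilde\Lambda_k$.

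Next I would bring in equidistribution. The standing genericity hypothesis --- that $p_1(\Lambda_k)$ is dense in $\im p_1=\R^{nk}$ --- says exactly that the homomorphic image of $\Z^n$ in the torus $\R^{nk}/\widetilde\Lambda_k$ (via $\iota$ composed with the quotient map) is dense, hence, by Weyl's theorem, equidistributed for the Haar measure. The image of $W^k$ in this torus is Jordan-measurable, since its boundary lies in the projection of $\partial W^k$ (a finite union of faces of a cube), which is Haar-negligible; so equidistribution can be tested against its indicator, giving
\[D_d(\Gamma_k)=\operatorname{Haar}\big(\text{image of }W^k\text{ in }\R^{nk}/\widetilde\Lambda_k\big)=D_c\big(W^k+\widetilde\Lambda_k\big),\]
where the last equality is the standard identity between the density of a periodic set and the normalized Haar measure of its image in the quotient torus (note $\operatorname{covol}(\widetilde\Lambda_k)=\prod_i|\det A_i|=1$). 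Incidentally, this argument also shows that along generic sequences the $\limsup$ in the definition of $\tau^k$ is a genuine limit.

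It then remains to check $D_c(W^k+\widetilde\Lambda_k)=D_c(W^{k+1}+\Lambda_k)$, the right-hand side being by definition $\overline\tau^k$. Here I would slice along the last $n$ coordinates: from $\Lambda_k=\bigsqcup_{t\in\Z^n}\big(\widetilde\Lambda_k-\iota(t)\big)\times\{t\}$ and $W^{k+1}=W^k\times W^1$ with $W^1=]-1/2,1/2]^n$, one sees that inside the slab $\R^{nk}\times(W^1+t)$ the set $W^{k+1}+\Lambda_k$ equals $\big(W^k+\widetilde\Lambda_k-\iota(t)\big)\times(W^1+t)$, i.e.\ a translate of $W^k+\widetilde\Lambda_k$ times a unit cube. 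Since continuous densities are invariant under translation and the slabs tile $\R^{n(k+1)}$, a Fubini computation (using cubes rather than $\ell^\infty$-balls, or invoking that both choices define the same density) gives the desired equality.

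I expect the equidistribution step to be the main obstacle: one must upgrade the qualitative hypothesis ``$p_1(\Lambda_k)$ dense'' to genuine equidistribution, and then pass from continuous test functions to the indicator of the (half-open) window by squeezing between continuous functions from inside and outside while controlling the Haar-negligible boundary. The remaining steps are essentially bookkeeping with the block matrix $M_{A_1,\cdots,A_k}$, together with the elementary fact that the density of a lattice-periodic subset of Euclidean space is unchanged by translation.
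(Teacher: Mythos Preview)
Your proof is correct and follows essentially the same route as the paper's: both reduce the membership condition $y\in\Gamma_k$ to the condition that $(0^{(k-1)n},y)$ lies in a $\widetilde\Lambda_k$-translate of the window, and then invoke equidistribution of $\iota(\Z^n)$ in the torus $\R^{nk}/\widetilde\Lambda_k$. The only differences are cosmetic: the paper verifies equidistribution by explicitly computing $\widetilde M_{A_1,\cdots,A_k}^{-1}$ and reading off the action on $\R^{nk}/\Z^{nk}$, whereas you deduce it directly from the standing density hypothesis on $p_1(\Lambda_k)$ via Weyl's criterion; and you supply the slicing argument for the second equality $D_c(W^k+\widetilde\Lambda_k)=D_c(W^{k+1}+\Lambda_k)$, which the paper leaves implicit.
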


\begin{rem}
The density on the left of the equality is the density of a discrete set (that is, with respect to counting measure), whereas the density on the right of the equality is that of a continuous set (that is, with respect to Lebesgue measure). The two notions coincide when we consider discrete sets as sums of Dirac masses.
\end{rem}

\begin{rem}\label{conttaukk}
Proposition~\ref{CalculTauxModel} asserts that for a generic sequence of matrices, the rate of injectivity $\tau^k$ in time $k$ coincides with the mean rate of injectivity $\overline\tau^k$, which is continuous and piecewise polynomial of degree $\le nk$ in the coefficients of the matrix.
\end{rem}


\begin{proof}[Proof of Proposition \ref{CalculTauxModel}]
We want to determine the density of $\Gamma_k$. By Equation~\eqref{CalcGamma}, we have
\[x\in\Gamma_k \iff x\in\Z^n\ \text{and}\ \exists \lambda\in \Lambda_k : x=p_2(\lambda),\, p_1(\lambda) \in W^k.\]
But if $x=p_2(\lambda)$, then we can write $\lambda=(\widetilde\lambda,0^n) + (0^{(k-1)n},-x,x)$ with $\widetilde\lambda\in \widetilde\Lambda_k$. Thus,
\begin{align*}
x\in\Gamma_k & \iff x\in\Z^n\ \text{and}\ \exists \widetilde\lambda\in \widetilde\Lambda_k : (0^{(k-1)n},-x)-\widetilde\lambda	\in W^k\\
             & \iff x\in\Z^n\ \text{and}\ (0^{(k-1)n},x)\in \bigcup_{\widetilde\lambda\in\widetilde\Lambda_k} \widetilde\lambda - W^k.
\end{align*}
Thus, $x\in\Gamma_k$ if and only if the projection of $(0^{(k-1)n},x)$ on $\R^{nk}/\widetilde\Lambda_k$ belongs to $\bigcup_{\widetilde\lambda\in\widetilde\Lambda_k} \widetilde\lambda - W^k$. Then, the proposition follows directly from the fact that the points of the form $(0^{(k-1)n},x)$, with $x\in\Z^n$, are equidistributed in $\R^{nk} / \widetilde \Lambda_k$.
\bigskip

To prove this equidistribution, we compute the inverse matrix of $\widetilde M_{A_1,\cdots,A_k}$:
\[{\widetilde M_{A_1,\cdots,A_k}}^{-1} = \begin{pmatrix}
A_1^{-1} & A_1^{-1}A_2^{-1} &  A_1^{-1}A_2^{-1}A_3^{-1} & \cdots & A_1^{-1}\cdots A_k^{-1}\\
    & A_2^{-1}  & A_2^{-1}A_3^{-1} & \cdots & A_2^{-1}\cdots A_k^{-1}\\
    &      & \ddots & \ddots  & \vdots\\
    &      &        & A_{k-1}^{-1} & A_{k-1}^{-1}A_k^{-1}\\
    &     &        &          & A_k^{-1}
\end{pmatrix}.\]
Thus, the set of points of the form $(0^{(k-1)n},x)$ in $\R^{nk} / \widetilde \Lambda_k$ corresponds to the image of the action
\[\Z^n \ni x \longmapsto
\begin{pmatrix}
A_1^{-1}\cdots A_k^{-1}\\
A_2^{-1}\cdots A_k^{-1}\\
\vdots\\
A_{k-1}^{-1}A_k^{-1}\\
A_k^{-1}
\end{pmatrix}x\]
of $\Z^n$ on the canonical torus $\R^{nk}/\Z^{nk}$. But this action is ergodic (even in restriction to the first coordinate) when the sequence of matrices is generic.
\end{proof}

\subsection[Generically, the asymptotic rate is zero]{Proof of Lemma~\ref{ConjPrincip}: generically, the asymptotic rate is zero}

We now come to the proof of Lemma~\ref{ConjPrincip}. We will use an induction to decrease the rate step by step. Recall that $\overline\tau^k(A_1,\cdots,A_k)$ indicates the density of the set $W^{k+1} + \Lambda_k$.
%
%

More precisely, we will prove that for a generic sequence $(A_k)_{k\ge 1}$, if $\overline\tau^k(A_1,\cdots,A_k)> 1/\ell$, then $\overline\tau^{k+\ell-1}(A_1,\cdots,A_{k+\ell-1})$ is strictly smaller than $\overline\tau^k(A_1,\cdots,A_k)$. More precisely, we consider the maximal number of disjoint translates of $W^k + \widetilde\Lambda_k$ in $\R^{nk}$: we easily see that if the density of $W^k + \widetilde\Lambda_k$ is bigger than $1/\ell$, then there can not be more than $\ell$ disjoint translates of $W^k + \widetilde\Lambda_k$ in $\R^{nk}$(Lemma~\ref{EstimTauxN}). At this point, Lemma~\ref{LemDeFin} states that if the sequence of matrices is generic, then either the density of $W^{k+1} + \widetilde\Lambda_{k+1}$ is smaller than that of $W^k + \widetilde\Lambda_k$ (Figure~\ref{FigLemDeFin}), or there can not be more than $\ell-1$ disjoint translates of $W^{k+1} + \widetilde\Lambda_{k+1}$ in $\R^{n(k+1)}$(see Figure~\ref{FigLemDeFin2}). Applying this reasoning (at most) $\ell-1$ times, we obtain that the density of $W^{k+\ell-1} + \widetilde\Lambda_{k+\ell-1}$ is smaller than that of $W^k + \widetilde\Lambda_k$. For example if $D_c\big(W^k + \widetilde\Lambda_k\big) > 1/3$, then $D_c\big(W^{k+2} + \widetilde\Lambda_{k+2}\big) < D\big(W^k + \widetilde\Lambda_k\big)$ (see Figure~\ref{FigInterCubes3DSupDemi}). To apply this strategy in practice, we have to obtain quantitative estimates about the loss of density we get between times $k$ and $k+\ell-1$.

Remark that with this strategy, we do not need to make ``clever'' perturbations of the matrices: provided that the coefficients of the matrices are rationally independent, the perturbation of each matrix is made independently from that of the others. However, this reasoning does not tell when exactly the rate of injectivity decreases (likely, in most of cases, the speed of decreasing of the rate of injectivity is much faster than the one obtained by this method), and does not say either where exactly the loss of injectivity occurs in the image sets.

We will indeed prove a more precise statement of Lemma~\ref{ConjPrincip}.

{\renewcommand{\thelemme}{\ref{ConjPrincip}}
\begin{lemme}
For a generic sequence of matrices $(A_k)_{k\ge 1}$ of $\ell^\infty(SL_n(\R))$, for every $\ell\in\N$, there exists $\lambda_\ell\in]0,1[$ such that for every $k\in\N$,
\begin{equation}\label{EstimTauxExp}
\tau^{\ell k}(A_1,\cdots,A_{\ell k}) \le \lambda_\ell^k + \frac{1}{\ell}.
\end{equation}
Thus, the asymptotic rate of injectivity $\tau^\infty\big( (A_k)_{k\ge 1}\big)$ is equal to zero.
\end{lemme}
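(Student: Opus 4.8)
The strategy is an induction on $\ell$-blocks of matrices, using the geometric description of the rate of injectivity from Proposition~\ref{CalculTauxModel}: for a generic sequence, $\tau^k(A_1,\cdots,A_k) = \overline\tau^k(A_1,\cdots,A_k) = D_c(W^k + \widetilde\Lambda_k)$, so it suffices to control the mean rate $\overline\tau^k$, which is continuous (indeed piecewise polynomial) in the coefficients, hence amenable to perturbation. The heart of the argument is the dichotomy announced in the paragraph preceding the statement: if $\overline\tau^k(A_1,\cdots,A_k) > 1/\ell$, then for a generic perturbation, either $\overline\tau^{k+1}(A_1,\cdots,A_{k+1})$ is strictly (and quantitatively) smaller than $\overline\tau^k(A_1,\cdots,A_k)$, or the maximal number of disjoint translates of $W^{k+1} + \widetilde\Lambda_{k+1}$ in $\R^{n(k+1)}$ drops by one compared to that of $W^k + \widetilde\Lambda_k$. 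The combinatorial bookkeeping here is Lemma~\ref{EstimTauxN} (density $> 1/\ell$ forces at most $\ell$ disjoint translates) together with Lemma~\ref{LemDeFin} (the step in which either the density decreases or the number of admissible disjoint translates decreases).

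First I would fix $\ell\in\N$ and argue that, starting from any block $(A_{\ell k+1},\cdots,A_{\ell k+\ell})$ and applying the dichotomy at most $\ell - 1$ times within that block, one reaches the situation where, if the density is still $> 1/\ell$, then after the block the density has strictly decreased. The point of working with blocks of length $\ell$ is that one has $\ell-1$ "free" intermediate times to exhaust the at-most-$\ell$ disjoint translates down to fewer than one, which is absurd — so the only way to survive the block with density $>1/\ell$ is to have it strictly decrease. To turn "strictly decreases" into the geometric rate $\lambda_\ell^k + 1/\ell$, I would extract, from the quantitative form of Lemma~\ref{LemDeFin}, a uniform gain: there is a constant $c_\ell\in]0,1[$ such that whenever $\overline\tau^{\ell k}(A_1,\cdots,A_{\ell k}) > 1/\ell$, a generic perturbation on the next $\ell$ matrices (not affecting the previous ones, since the coefficients are rationally independent and perturbations of distinct matrices are independent) yields
\[
\overline\tau^{\ell(k+1)}(A_1,\cdots,A_{\ell(k+1)}) - \frac{1}{\ell} \;\le\; c_\ell\left(\overline\tau^{\ell k}(A_1,\cdots,A_{\ell k}) - \frac{1}{\ell}\right).
\]
Iterating this from $k=0$, using that $\overline\tau^0 \le 1$, gives $\overline\tau^{\ell k}(A_1,\cdots,A_{\ell k}) - 1/\ell \le c_\ell^k$, hence \eqref{EstimTauxExp} with $\lambda_\ell = c_\ell$; once the density drops to $\le 1/\ell$ the estimate is trivially maintained since $\overline\tau^k$ is nonincreasing in $k$.

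To pass from "each finite perturbation exists" to "a generic sequence satisfies \eqref{EstimTauxExp} for all $\ell$ and all $k$", I would set up the standard Baire argument: for fixed $\ell$, the set of sequences with $\overline\tau^{\ell k}(A_1,\cdots,A_{\ell k}) < \lambda_\ell^k + 1/\ell + 1/j$ contains an open dense subset of $\ell^\infty(SL_n(\R))$ (open by continuity of $\overline\tau^{\ell k}$, i.e.\ Remark~\ref{conttaukk}; dense by the perturbation just described, applied on successive blocks), and then intersect over $\ell$, $k$, $j$; on this residual set $\tau^\infty = \lim_k \tau^k = 0$. The main obstacle is the quantitative step: proving Lemma~\ref{LemDeFin} with a uniform loss constant $c_\ell$ depending only on $\ell$ and $n$ (not on $k$ or on the matrices) — this requires a careful volume/lattice estimate showing that two nearly-overlapping translates of the "inflated" window $W^{k+1} + \widetilde\Lambda_{k+1}$ must lose a definite fraction of density, and controlling how the window grows from $W^k$ to $W^{k+1}$ under the extra linear map $A_{k+1}$. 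Everything else is either the already-established geometric dictionary (Proposition~\ref{CalculTauxModel}) or routine Baire-category and rational-independence arguments.
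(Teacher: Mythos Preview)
Your proposal is correct and follows essentially the same approach as the paper: both arguments combine Lemma~\ref{EstimTauxN} (density $>1/\ell$ forbids $\ell$ disjoint translates) with at most $\ell-1$ successive applications of the dichotomy Lemma~\ref{LemDeFin} to obtain the contraction $\overline\tau^{k+\ell-1}-1/\ell \le \lambda_\ell(\overline\tau^k-1/\ell)$, and then iterate. The only cosmetic difference is that you spell out the Baire/open-dense wrapping more explicitly and work in blocks of length $\ell$ rather than $\ell-1$, which is immaterial since $\overline\tau^k$ is nonincreasing; note also that the paper treats Lemma~\ref{LemDeFin} as already established, so your ``main obstacle'' is external to the proof being compared.
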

\addtocounter{lemme}{-1}}

The following lemma expresses that if the density of $W^k + \widetilde\Lambda_k$ is bigger than $1/\ell$, then there can not be more than $\ell$ disjoint translates of $W^k + \widetilde\Lambda_k$, and gives an estimation on the size of these intersections.

\begin{lemme}\label{EstimTauxN}
Let $W^k = ]-1/2,1/2]^k$ and $\Lambda\subset \R^k$ be a lattice with covolume 1 such that $D_c(W^k + \Lambda)\ge 1/\ell$. Then, for every collection $v_1,\cdots,v_\ell\in\R^k$, there exists $i\neq i'\in \llbracket 1,\ell\rrbracket$ such that
\[D_c\big((W^k + \Lambda + v_i)\cap (W^k + \Lambda + v_{i'})\big) \ge 2\frac{\ell D_c(W^k + \Lambda)-1}{\ell(\ell-1)}.\]
\end{lemme}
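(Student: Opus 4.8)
The plan is to reduce the whole statement to the flat torus $\T = \R^k/\Lambda$ (a probability space, since $\Lambda$ has covolume $1$) and then combine a first-order Bonferroni inequality with the pigeonhole principle.

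First I would record the elementary fact that for any bounded measurable $E_0\subset\R^k$ the $\Lambda$-periodic set $E_0+\Lambda = \rho^{-1}\big(\rho(E_0)\big)$, where $\rho:\R^k\to\T$ is the quotient map, has a well-defined density $D_c(E_0+\Lambda)$ equal to $\mu\big(\rho(E_0)\big)$, where $\mu$ is the probability measure $\T$ inherits from $\Leb_k$; this is a routine equidistribution remark (the ratio $\Leb_k(B_R\cap E)/\Leb_k(B_R)$ converges to the average of $\mathbf 1_E$ over a fundamental domain because the $B_R$ exhaust $\R^k$ and $E$ is periodic). Applying this to $E_0=W^k+v_i$, the sets $\bar S_i := \rho(W^k+v_i) = \bar S + \bar v_i$ (with $\bar S=\rho(W^k)$, $\bar v_i=\rho(v_i)$) satisfy $D_c(W^k+\Lambda+v_i)=\mu(\bar S_i)=\mu(\bar S) = d := D_c(W^k+\Lambda)\ge 1/\ell$, by translation invariance of $\mu$. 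Likewise each pairwise intersection $(W^k+\Lambda+v_i)\cap(W^k+\Lambda+v_{i'})$ is $\Lambda$-periodic (intersection of periodic sets), so its density equals $\mu(\bar S_i\cap\bar S_{i'})$.

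Next I would apply the Bonferroni inequality in $(\T,\mu)$:
\[\mu\Big(\bigcup_{i=1}^\ell \bar S_i\Big)\ \ge\ \sum_{i=1}^\ell \mu(\bar S_i)\ -\ \sum_{1\le i<i'\le\ell}\mu(\bar S_i\cap\bar S_{i'}).\]
Since $\mu\big(\bigcup_i\bar S_i\big)\le 1$ and $\sum_i\mu(\bar S_i)=\ell d$, this rearranges to $\sum_{i<i'}\mu(\bar S_i\cap\bar S_{i'})\ge \ell d-1$, a quantity that is nonnegative because $d\ge 1/\ell$. There are exactly $\ell(\ell-1)/2$ pairs, so by pigeonhole at least one pair $(i,i')$ with $i\neq i'$ satisfies
\[\mu(\bar S_i\cap\bar S_{i'})\ \ge\ \frac{\ell d-1}{\ell(\ell-1)/2}\ =\ 2\,\frac{\ell d-1}{\ell(\ell-1)},\]
and translating back through the first paragraph gives the claimed lower bound for $D_c\big((W^k+\Lambda+v_i)\cap(W^k+\Lambda+v_{i'})\big)$.

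There is essentially no serious obstacle: the only point needing a line of justification is the existence of densities for $\Lambda$-periodic sets and their computation via projection to $\T$, and one should also note the degenerate case $\ell d=1$, where the asserted bound is $0$ and the statement is vacuous.
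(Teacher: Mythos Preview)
Your proof is correct and follows essentially the same route as the paper: project to the torus $\R^k/\Lambda$, use an inclusion--exclusion/overlap bound to see that the total pairwise intersection measure is at least $\ell D_c(W^k+\Lambda)-1$, then pigeonhole over the $\binom{\ell}{2}$ pairs and lift back. If anything, your explicit use of the Bonferroni inequality is a cleaner formulation of the paper's somewhat informal ``volume of points belonging to at least two different sets'' step.
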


\begin{proof}[Proof of Lemma~\ref{EstimTauxN}]
For every $v\in\R^k$, the density $D_c(W^k + \Lambda+v)$ is equal to the volume of the projection of $W^k$ on the quotient space $\R^k/\Lambda$. As this volume is greater than $1/\ell$, and as the covolume of $\Lambda$ is 1, the projections of the $W^k + v_i$ overlap, and the volume of the points belonging to at least two different sets is bigger than $\ell D_c(W^k + \Lambda)-1$. As there are $\ell(\ell-1)/2$ possibilities of intersection, there exists $i\neq i'$ such that the volume of the intersection between the projections of $W^k+v_i$ and $W^k+v_{i'}$ is bigger than $2(\ell D_c(W^k + \Lambda)-1)/(\ell(\ell-1))$. Returning to the whole space $\R^k$, we get the conclusion of the lemma.
\end{proof}

We will also need the following lemma, whose proof consists in a simple counting argument.

\begin{lemme}\label{DoubleComptage}
Let $\Lambda_1$ be a subgroup of $\R^m$, $\Lambda_2$ be such that $\Lambda_1 \oplus \Lambda_2$ is a lattice of covolume 1 of $\R^m$, and $C$ be a compact subset of $\R^m$. Let $C_1$ be the projection of $C$ on the quotient $\R^m/\Lambda_1$, and $C_2$ the projection of $C$ on the quotient $\R^m/(\Lambda_1\oplus\Lambda_2)$. We denote by
\[a_i = \Leb\big\{x\in C_1 \mid \card\{\lambda_2\in\Lambda_2 \mid x\in C_1+\lambda_2\} = i \big\}\]
(in particular, $\sum_{i\ge 1} a_i = \Leb(C_1)$). Then,
\[\Leb(C_2) = \sum_{i\ge 1} \frac{a_i}{i}.\]
\end{lemme}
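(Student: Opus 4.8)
The plan is to read both sides of the identity as integrals over the torus $\R^m/(\Lambda_1\oplus\Lambda_2)$ and to relate them through the covering map between the two quotients. Write $T_1=\R^m/\Lambda_1$ and $T_2=\R^m/(\Lambda_1\oplus\Lambda_2)$. Since $\Lambda_1\oplus\Lambda_2$ is a lattice of covolume $1$, the group $\Lambda_2$ is discrete, $\Lambda_1\cap\Lambda_2=\{0\}$, and $\Lambda_2$ acts freely and properly discontinuously by translations on $T_1$ with quotient $T_2$; hence the natural projection $q:T_1\to T_2$ is a covering map, locally volume-preserving for the Lebesgue (Haar) measures of $T_1$ and $T_2$ (normalised so that $T_2$ has total mass $1$, consistently with $\Leb$ on fundamental domains). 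By construction $C_1$ is the image of $C$ in $T_1$, $C_2$ is its image in $T_2$, and $C_2=q(C_1)$; as $C$ is compact, $\Leb(C_1)<\infty$ even though $T_1$ need not be compact.

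Next I would record the combinatorial meaning of $i(x)$. For $x\in C_1$, the condition $x\in C_1+\lambda_2$ is equivalent to $x-\lambda_2\in C_1$, so — after replacing $\lambda_2$ by $-\lambda_2$, which is a bijection of the group $\Lambda_2$ — the integer $i(x)$ equals $\card\{\lambda_2\in\Lambda_2 : x+\lambda_2\in C_1\}$, that is, the number of points of the $\Lambda_2$-orbit of $x$ (the fibre $q^{-1}(q(x))$) lying in $C_1$. In other words $i(x)=N(q(x))$, where $N(y)=\card\big(q^{-1}(y)\cap C_1\big)$; in particular $i$ is constant along each fibre of $q_{|C_1}$ and $N(y)\ge 1$ for every $y\in C_2$.

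The computation is then an unfolding argument. For any non-negative measurable $h$ on $T_1$ one has $\int_{T_2}\big(\sum_{x\in q^{-1}(y)}h(x)\big)\ud y=\int_{T_1}h(x)\ud x$, since $q$ is a locally volume-preserving covering. Applying this to $h=\1_{C_1}/i$ (well defined, as $i\ge 1$ on $C_1$): the left-hand side equals $\int_{C_2}\big(\sum_{x\in q^{-1}(y)\cap C_1}1/i(x)\big)\ud y=\int_{C_2}\big(N(y)\cdot 1/N(y)\big)\ud y=\Leb(C_2)$, using $i(x)=N(y)$ on the fibre over $y\in C_2$; the right-hand side equals $\int_{C_1}\ud x/i(x)=\sum_{i\ge 1}a_i/i$, obtained by splitting $C_1$ according to the value of $i$. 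This gives the stated formula, and the identity $\sum_{i\ge1}a_i=\Leb(C_1)$ is just the corresponding partition without the weight.

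I do not expect a genuine obstacle here; the only points needing a little care are checking that $q$ is really a covering (properly discontinuous action) when $\Lambda_1$ has possibly non-maximal rank, and the bookkeeping identifying $i(x)$ with the fibre cardinality $N(q(x))$. As an alternative avoiding the unfolding formula, one may fix a fundamental domain $F\subset T_1$ for the $\Lambda_2$-action, identify $F$ with $T_2$, describe $C_2$ as the image of $C_1$ under $x\mapsto$ (the unique $\Lambda_2$-orbit representative of $x$ in $F$), and partition $F\cap q(C_1)$ by the number of orbit preimages in $C_1$, which recovers $\Leb(C_2)=\sum_{i\ge1}a_i/i$ directly.
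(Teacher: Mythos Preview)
Your argument is correct. The paper does not actually write out a proof of this lemma; it only remarks that ``the proof consists in a simple counting argument'', so your unfolding computation via the covering $q:T_1\to T_2$ --- identifying $i(x)$ with the fibre cardinality $N(q(x))$ and integrating $\1_{C_1}/i$ --- is precisely the kind of double-counting the paper has in mind, just made explicit.
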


In particular, the area of $C_2$ (the projection on the quotient by $\Lambda_1 \oplus \Lambda_2$) is smaller than (or equal to) that of $C_1$ (the projection on the quotient by $\Lambda_1$). The loss of area is given by the following corollary.

\begin{coro}\label{CoroSansNom}
With the same notations as for Lemma~\ref{DoubleComptage}, if we denote by
\[D_1 = \Leb\big\{x\in C_1 \mid \card\{\lambda_2\in\Lambda_2 \mid x\in C_1+\lambda_2\} \ge 2 \big\},\]
then,
\[\Leb(C_2) \le \Leb(C_1) - \frac{D_1}{2}.\]
\end{coro}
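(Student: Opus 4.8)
The plan is to read Corollary~\ref{CoroSansNom} off from Lemma~\ref{DoubleComptage} by a short bookkeeping argument, with essentially no extra input. First I would recall the two identities provided by the lemma: $\Leb(C_2) = \sum_{i\ge 1} a_i/i$ and $\sum_{i\ge 1} a_i = \Leb(C_1)$. Next I would observe that the set appearing in the definition of $D_1$ is exactly the disjoint union, over $i \ge 2$, of the level sets $\{x \in C_1 \mid \card\{\lambda_2\in\Lambda_2 \mid x \in C_1 + \lambda_2\} = i\}$ (the value $i=0$ is impossible since $0\in\Lambda_2$, and there is no mass at $i=\infty$, as the lemma's identity already records), so that $D_1 = \sum_{i\ge 2} a_i$. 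All these series converge, because $C$ is compact and hence $\Leb(C_1) \le \Leb(C) < +\infty$, which legitimizes the term-by-term manipulations below.

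The computation itself is then immediate: subtracting the two identities gives
\[\Leb(C_1) - \Leb(C_2) = \sum_{i\ge 1}\Big(1 - \frac 1i\Big) a_i.\]
The term $i = 1$ contributes nothing, and for every $i \ge 2$ one has $1 - 1/i \ge 1/2$; therefore
\[\Leb(C_1) - \Leb(C_2) \ge \frac 12 \sum_{i\ge 2} a_i = \frac{D_1}{2},\]
which is precisely the claimed inequality.

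I do not expect any genuine obstacle here: the whole content of the statement sits in Lemma~\ref{DoubleComptage}, and the corollary merely isolates the crudest quantitative consequence that is convenient for the subsequent induction on the rate of injectivity. The only points deserving a word are the (automatic) finiteness and convergence of the series, and possibly a remark that the bound is sharp --- equality holds exactly when $a_i = 0$ for all $i \ge 3$, i.e.\ when no point of $C_1$ is covered more than twice by the $\Lambda_2$-translates.
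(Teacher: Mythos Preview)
Your proof is correct and is exactly the intended derivation: the paper states the corollary immediately after Lemma~\ref{DoubleComptage} without a separate proof, treating it as a direct consequence of the identities $\Leb(C_2)=\sum_{i\ge 1}a_i/i$ and $\Leb(C_1)=\sum_{i\ge 1}a_i$. Your bookkeeping $\Leb(C_1)-\Leb(C_2)=\sum_{i\ge 1}(1-1/i)a_i\ge \tfrac12\sum_{i\ge 2}a_i=D_1/2$ is precisely the computation the reader is expected to supply.
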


Recall that we denote $\widetilde\Lambda_k$ the lattice spanned by the matrix
\[\widetilde M_{A_1,\cdots,A_k} = \left(\begin{array}{ccccc}
A_1 & -\Id &        &         & \\
    & A_2  & -\Id   &         & \\
    &      & \ddots & \ddots  & \\
    &      &        & A_{k-1} & -\Id\\
    &     &        &          & A_k
\end{array}\right)\in M_{nk}(\R),
\]
and $W^k$ the cube $]-1/2,1/2]^{nk}$. The proof of Lemma~\ref{ConjPrincip} will reduce to the following technical lemma.

\begin{lemme}\label{LemDeFin}
For every $\delta>0$ and every $M>0$, there exists $\varep>0$ and an open set of matrices $\mathcal O \subset SL_n(\R)$, which is $\delta$-dense in the set of matrices of norm $\le M$, such that if $\ell\ge 2$ and $D_0>0$ are such that for every collection of vectors $v_1,\cdots,v_{\ell} \in\R^n$, there exists $j,j'\in\llbracket 1,\ell\rrbracket$ such that
\[D_c\bigg(\Big(W^k + \widetilde\Lambda_k + (0^{(k-1)n} , v_j) \Big) \cap \Big(W^k + \widetilde\Lambda_k + (0^{(k-1)n} , v_{j'}) \Big)\bigg)\ge D_0,\]
then for every $B \in \mathcal O$, if we denote by $\widetilde\Lambda_{k+1}$ the lattice spanned by the matrix $\widetilde M_{A_1,\cdots,A_k,B}$,
\begin{enumerate}[(1)]
\item either $D_c(W^{k+1} + \widetilde\Lambda_{k+1}) \le D_c(W^{k} + \widetilde\Lambda_{k}) - \varep D_0/(4\ell)$;
\item or for every collection of vectors $w_1,\cdots,w_{\ell-1} \in\R^n$, there exists $i\neq i'\in\llbracket 1,\ell-1\rrbracket$ such that
\[D_c\bigg(\Big(W^{k+1} + \widetilde\Lambda_{k+1} + (0^{kn} , w_i) \Big) \cap \Big(W^{k+1} + \widetilde\Lambda_{k+1} + (0^{kn} , w_{i'}) \Big)\bigg)\ge \varep D_0 /\ell^2.\]
\end{enumerate}
\end{lemme}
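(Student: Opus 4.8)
The plan is to reduce everything to a double‑counting computation about the cube $W^{k+1}$ and the lattice $\widetilde\Lambda_{k+1}$, using the structural relation between $\widetilde\Lambda_{k+1}$ and $\widetilde\Lambda_k$. Writing $\R^{n(k+1)}=\R^{n(k-1)}\times\R^n\times\R^n$, one reads off the matrix $\widetilde M_{A_1,\dots,A_k,B}$ that its first $nk$ columns are those of $\widetilde M_{A_1,\dots,A_k}$ completed by a block of zeros, while its last $n$ columns are the vectors $(0^{n(k-1)},-v,Bv)$, $v\in\Z^n$. Thus $\widetilde\Lambda_{k+1}=(\widetilde\Lambda_k\times\{0^n\})\oplus L_B$, where $L_B$ is the rank‑$n$ subgroup generated by the last columns. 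Exactly as in the proof of Proposition~\ref{CalculTauxModel}, equidistribution arguments (valid once $B$ is taken in a suitable dense open subset of $SL_n(\R)$, which is one of the constraints defining $\mathcal O$) let me identify, for every $w\in\R^n$, the density $D_c\big(W^{k+1}+\widetilde\Lambda_{k+1}+(0^{kn},w)\big)$ with the discrete density of $\widehat{B+w}(\Gamma_k)$ — the discretization of the affine map $x\mapsto Bx+w$ applied to the image set $\Gamma_k$ — and the overlap of two level‑$(k+1)$ translates with $D_d\big(\widehat{B+w_i}(\Gamma_k)\cap\widehat{B+w_{i'}}(\Gamma_k)\big)$; the same dictionary one level down turns the hypothesis of the lemma into a statement about the auto‑correlations $O_k(v-v'):=D_c\big((W^k+\widetilde\Lambda_k)\cap(W^k+\widetilde\Lambda_k+(0^{(k-1)n},v-v'))\big)$ of the level‑$k$ set.

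Writing $C_0:=]-1/2,1/2]^n$, I would then apply the double‑counting estimate: taking $\Lambda_1=\widetilde\Lambda_k\times\{0^n\}$, $\Lambda_2=L_B$ and $C=W^{k+1}$ in Corollary~\ref{CoroSansNom} (with Lemma~\ref{DoubleComptage} for the bookkeeping) gives
\[ D_c(W^{k+1}+\widetilde\Lambda_{k+1})\;\le\;D_c(W^k+\widetilde\Lambda_k)-\tfrac12 D_1, \]
where $D_1$ is the measure of the part of the projection of $W^{k+1}$ that is covered at least twice by the $L_B$‑translates; concretely $D_1$ measures the failure of injectivity of $\widehat B$ on $\Gamma_k$, and in any case $D_1\ge O_k(v)\,\Leb\big(C_0\cap(C_0+Bv)\big)$ for every $v\in\Z^n\setminus\{0\}$. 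This inequality already is conclusion (1) provided $D_1$ is not too small, so, after fixing $\varep$ and $\mathcal O$ appropriately (using the continuity of the mean rate, Remark~\ref{conttaukk}), the remaining case is $D_1<\varep D_0/(2\ell)$, in which the displayed lower bounds force $O_k(v)$ to be small for every nonzero $v$ that is ``short'' for $B$.

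In that case I would prove conclusion (2) by contradiction. Assuming $w_1,\dots,w_{\ell-1}\in\R^n$ have all their pairwise level‑$(k+1)$ overlaps $<\varep D_0/\ell^2$, one unwinds the model‑set description of $\widehat{B+w}(\Gamma_k)$ into an addition formula: the overlap of the translates indexed by $w_i$ and $w_{i'}$ is, up to an error controlled by $D_1$, equal to $\sum_{v\in\Z^n}O_k(v)\,\Leb\big(C_0\cap(C_0+Bv-(w_i-w_{i'}))\big)$. Smallness of each such sum forces the relevant level‑$k$ auto‑correlations to be small, hence that suitable integer vectors $u_i$ built from the $w_i$ are pairwise $D_0$‑separated window‑shifts at level $k$; adjoining the trivial shift $0$ — legitimate precisely because $D_1$ being small means $\widehat B$ is almost injective, so one ``slot'' is free — yields a collection of $\ell$ pairwise $D_0$‑separated shifts at level $k$, contradicting the hypothesis.

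The last paragraph is the real obstacle. Making the addition formula precise and controlling all its error terms uniformly (so that a single $\varep$ works), and — above all — correctly organizing the passage from $\ell$ separated translates at level $k$ to only $\ell-1$ at level $k+1$, is delicate: it is exactly here that one must choose the open dense set $\mathcal O$ with care, controlling the finitely many ``short'' integer vectors of $B$, the way $B\Z^n$ tiles $\R^n$, and the positions of the $B^{-1}w_i$, and must also split off the easier sub‑case in which the $w_i$ are mutually far apart relative to $B\Z^n$.
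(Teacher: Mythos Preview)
Your structural setup is right — the decomposition $\widetilde\Lambda_{k+1}=(\widetilde\Lambda_k\times\{0^n\})\oplus L_B$ and the use of Corollary~\ref{CoroSansNom} are exactly what the paper uses — but the heart of the argument is the step you yourself flag as ``the real obstacle'', and your sketch of it does not work as written. The contradiction route via an ``addition formula'' and the construction of integer vectors $u_i$ from the $w_i$ is never made precise; in particular you give no mechanism that, starting from $\ell-1$ vectors $w_i$, produces $\ell$ vectors at level $k$ to which the hypothesis can be applied. Your proposed choice of $\mathcal O$ (based on controlling ``short'' integer vectors for $B$ and the tiling of $B\Z^n$) is also not the right one: it does not by itself deliver the required $\ell$ vectors.

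The paper sidesteps the whole contradiction argument by choosing $\mathcal O$ differently. One defines $\mathcal O_\varep$ to be the set of $B\in SL_n(\R)$ such that \emph{for every} $w_1,\dots,w_{\ell-1}\in\R^n$ the set of $x\in\R^n/B\Z^n$ covered at least $\ell$ times by the cubes $(Bv+w_i+W^1)_{v\in\Z^n,\,1\le i\le\ell-1}$ has measure $>\varep$; this is open and dense (it contains the matrices with totally irrational entries). Now fix $w_1,\dots,w_{\ell-1}$ and $B\in\mathcal O_\varep$. For each $x$ in the good set $U$ one obtains $\ell$ pairwise distinct pairs $(v_j,i_j)$ with $x\in Bv_j+w_{i_j}+W^1$; the hypothesis of the lemma applied to the vectors $-v_1,\dots,-v_\ell$ yields $j\neq j'$ and a set of $y$'s of density $\ge D_0$. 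The lattice identity
\[
W^{k+1}+\widetilde\Lambda_{k+1}+(0^{kn},w_{i_j})
=W^{k+1}+\widetilde\Lambda_{k+1}+\big(0^{(k-1)n},-v_j,w_{i_j}+Bv_j\big)
\]
then puts each pair $(y,x)$ inside two translates of $W^{k+1}+\widetilde\Lambda_{k+1}$. If $i_j=i_{j'}$ these two translates differ by a nonzero element of $\widetilde\Lambda_{k+1}$, so $W^{k+1}+\widetilde\Lambda_{k+1}$ auto-intersects and Corollary~\ref{CoroSansNom} gives conclusion~(1); if $i_j\neq i_{j'}$ the two translates are the ones indexed by $w_{i_j}$ and $w_{i_{j'}}$, giving conclusion~(2). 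A simple dichotomy on which case holds for the majority of $x\in U$ (plus pigeonhole on the index $i$ or on the pair $(i,i')$) yields the quantitative bounds. Since conclusion~(1) does not depend on the $w_i$, failure of (1) forces (2) for every collection $w_1,\dots,w_{\ell-1}$.

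The missing idea in your attempt, then, is the definition of $\mathcal O$ in terms of the multiplicity-$\ge\ell$ covering: it is this that manufactures, directly and for free, the $\ell$ integer vectors $v_1,\dots,v_\ell$ to feed into the hypothesis, and that drives the clean dichotomy $i_j=i_{j'}$ vs.\ $i_j\neq i_{j'}$.
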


In a certain sense, the conclusion (1) corresponds to an hyperbolic case, and the conclusion (2) expresses that there is a diffusion between times $k$ and $k+1$.

\begin{proof}[Proof of Lemma \ref{LemDeFin}]
Let $\mathcal O_\varep$ be the set of the matrices $B\in SL_n(\R)$ satisfying: for any collection of vectors $w_1,\cdots,w_{\ell-1} \in\R^n$, there exists a set $U\subset\R^n/B\Z^n$ of measure $>\varep$ such that every point of $U$ belongs to at least $\ell$ different cubes of the collection $(Bv + w_i + W^1)_{v\in\Z^n,\,1\le i\le\ell-1}$. In other words, every $x\in\R^n$ whose projection $\overline x$ on $\R^n/B\Z^n$ belongs to $U$ satisfies
\begin{equation}\label{EqLemDeFin}
\sum_{i=1}^{\ell-1} \sum_{v\in \Z^n}  \1_{x\in Bv + w_i + W^1} \ge \ell.
\end{equation}
We easily see that the sets $\mathcal O_\varep$ are open and that the union of these sets over $\varep>0$ is dense (it contains the set of matrices $B$ whose entries are all irrational). Thus, if we are given $\delta>0$ and $M>0$, there exists $\varep>0$ such that $\mathcal O = \mathcal O_\varep$ is $\delta$-dense in the set of matrices of $SL_n(\R)$ whose norm is smaller than $M$.

We then choose $B\in \mathcal O$ and a collection of vectors $w_1,\cdots,w_{\ell-1} \in\R^n$. Let $x\in\R^n$ be such that $\overline x\in U$. By hypothesis on the matrix $B$, $x$ satisfies Equation~\eqref{EqLemDeFin}, so there exists $\ell+1$ integer vectors $v_1,\cdots,v_{\ell}$ and $\ell$ indices $i_1,\cdots,i_{\ell}$ such that the couples $(v_j,i_j)$ are pairwise distinct and that
\begin{equation}\label{EqIxIn}
\forall j\in\llbracket 1,\ell\rrbracket,\quad x\in Bv_j + w_{i_j} + W^1.
\end{equation}
\bigskip

\begin{figure}
\begin{minipage}[t]{.48\linewidth}
\begin{center}
\begin{tikzpicture}[scale=.95]
\draw[color=gray] (-.8,3) -- (5,3);
\draw[very thick,blue, |-|] (-.5,3) -- (.5,3);
\draw[very thick,blue, |-|] (-.5+2.8,3) -- (.5+2.8,3);

\draw[dashed] (-.8,.4) -- (5,.4);
\draw(-.8,.4) node[left]{$x$};
\draw[dashed] (2.4,-1.5) -- (2.4,1.7);
\draw (2.4,-1.5) node[below]{$y$};

\clip (-.6,-1.3) rectangle (4.8,1.5);
\foreach\j in {0,...,1}{
\draw[fill=blue,opacity=.25] (-.5+2.8*\j,-.5) rectangle (.5+2.8*\j,.5);
}
\foreach\i in {-3,...,5}{
\foreach\j in {-1,...,3}{
\draw[fill=blue,opacity=.20] (-.5+\i-1+2.8*\j,-.5+.4*\i-.4) rectangle (.5+\i-1+2.8*\j,.5+.4*\i-.4);
\draw (-.5+\i-1+2.8*\j,-.5+.4*\i-.4) rectangle (.5+\i-1+2.8*\j,.5+.4*\i-.4);
}}
\end{tikzpicture}
\caption[First case of Lemma \ref{LemDeFin}]{First case of Lemma \ref{LemDeFin}, in the case $\ell=3$: the set $W^{k+1} + \widetilde\Lambda_{k+1}$ auto-intersects.}\label{FigLemDeFin}
\end{center}
\end{minipage}\hfill
\begin{minipage}[t]{.48\linewidth}
\begin{center}
\begin{tikzpicture}[scale=.95]
\draw[color=gray] (-.8,3) -- (5,3);
\draw[very thick,blue, |-|] (-.5,3) -- (.5,3);
\draw[very thick,blue, |-|] (-.5+2.8,3) -- (.5+2.8,3);

\draw[dashed] (-.8,.4) -- (5,.4);
\draw(-.8,.4) node[left]{$x$};
\draw[dashed] (1.4,-1.5) -- (1.4,1.7);
\draw (1.4,-1.5) node[below]{$y$};

\clip (-.6,-1.3) rectangle (4.8,1.5);
\foreach\j in {0,...,1}{
\draw[fill=blue,opacity=.25] (-.5+2.8*\j,-.5) rectangle (.5+2.8*\j,.5);
}
\foreach\i in {-3,...,2}{
\foreach\j in {-1,...,2}{
\draw[fill=blue,opacity=.20] (-.5+\i+2.8*\j,-.5+.8*\i) rectangle (.5+\i+2.8*\j,.5+.8*\i);
\draw (-.5+\i+2.8*\j,-.5+.8*\i) rectangle (.5+\i+2.8*\j,.5+.8*\i);
\draw[fill=gray,opacity=.15] (-.5+\i+2.8*\j,-.5+.8*\i+1.2) rectangle (.5+\i+2.8*\j,.5+.8*\i+1.2);
\draw (-.5+\i+2.8*\j,-.5+.8*\i+1.2) rectangle (.5+\i+2.8*\j,.5+.8*\i+1.2);
}}
\end{tikzpicture}
\caption[Second case of Lemma \ref{LemDeFin}]{Second case of Lemma \ref{LemDeFin}, in the case $\ell=3$: two distinct vertical translates of $W^{k+1} + \widetilde\Lambda_{k+1}$ intersect (the first translate contains the dark blue thickening of $W^k + \widetilde\Lambda_k$, the second is represented in grey).}\label{FigLemDeFin2}
\end{center}
\end{minipage}
\end{figure}

The following formula makes the link between what happens in the $n$ last and in the $n$ penultimates coordinates of $\R^{n(k+1)}$:
\begin{equation}\label{EqChangeDim}
W^{k+1} + \widetilde\Lambda_{k+1} + \big(0^{(k-1)n},0^n,w_{i_j}\big) = W^{k+1} + \widetilde\Lambda_{k+1} + \big(0^{(k-1)n},-v_j,w_{i_j} + Bv_j\big),
\end{equation}
(we add a vector belonging to $\widetilde\Lambda_{k+1}$). 

We now apply the hypothesis of the lemma to the vectors $-v_1,\cdots,-v_{\ell+1}$: there exists $j\neq j'\in\llbracket 1,\ell\rrbracket$ such that
\begin{equation}\label{EqInter}
D_c\bigg(\Big(W^k + \widetilde\Lambda_k + (0^{(k-1)n} , -v_j)\Big) \cap \Big(W^k + \widetilde\Lambda_k + (0^{(k-1)n} , -v_{j'})\Big)\bigg) \ge D_0.
\end{equation}
Let $y$ be a point belonging to this intersection. Applying Equations~\eqref{EqIxIn} and \eqref{EqInter}, we get	that
\begin{equation}\label{EqLemDeFinCentr}
(y,x) \in W^{k+1} + \big(\widetilde\Lambda_k,0^n\big) + \big(0^{(k-1)n}, -v_j, w_{i_j} + Bv_j\big)
\end{equation}
and the same for $j'$.

Two different cases can occur.
\begin{enumerate}[(i)]
\item Either $i_j = i_{j'}$ (that is, the translation vectors $w_{i_j}$ and $w_{i_{j'}}$ are equal). As a consequence, applying Equation~\eqref{EqLemDeFinCentr}, we have
\begin{align*}
(y,x) + \big(0^{(k-1)n}, v_j, -Bv_j-w_{i_j}\big) \in & \Big( W^{k+1} + \big(\widetilde\Lambda_k,0^n\big) \Big)\cap\\
                                                     & \Big( W^{k+1} + \big(\widetilde\Lambda_k,0^n\big) + v'\Big),
\end{align*}
with
\[v' = \big(0^{(k-1)n}, -(v_{j'}-v_j), B(v_{j'}-v_j)\big) \in \widetilde\Lambda_{k+1} \setminus\widetilde\Lambda_k.\]
This implies that the set $W^{k+1} + \widetilde\Lambda_{k+1}$ auto-intersects (see Figure~\ref{FigLemDeFin}).
\item Or $i_j \neq i_{j'}$ (that is, $w_{i_j}\neq w_{i_{j'}}$). Combining Equations~\eqref{EqLemDeFinCentr} and \eqref{EqChangeDim} (note that $\big(\widetilde\Lambda_k,0^n\big) \subset \widetilde\Lambda_{k+1}$), we get
\[(y,x)\in \Big(W^{k+1} + \widetilde\Lambda_{k+1} + \big(0^{kn}, w_{i_j}\big)\Big) \cap \Big(W^{k+1} + \widetilde\Lambda_{k+1} + \big(0^{kn},w_{i_{j'}}\big)\Big).\]
This implies that two distinct vertical translates of $W^{k+1} + \widetilde\Lambda_{k+1}$ intersect (see Figure~\ref{FigLemDeFin2}).
\end{enumerate}

We now look at the global behaviour of all the $x$ such that $\overline x\in U$. Again, we have two cases.
\begin{enumerate}[(1)]
\item Either for more than the half of such $\overline x$ (for Lebesgue measure), we are in the case (i). To each of such $\overline x$ corresponds a translation vector $w_i$. We choose $w_i$ such that the set of corresponding $\overline x$ has the biggest measure; this measure is bigger than $\varep/\big(2(\ell-1)\big)\ge \varep/(2\ell)$. Using the notations of Corollary~\ref{CoroSansNom}, we get that the density $D_1$ of the auto-intersection of $W^{k+1} + \widetilde\Lambda_{k+1} + (0,w_i)$ is bigger than $D_0\varep/(2\ell)$. This leads to (using  Corollary~\ref{CoroSansNom})
\[D_c(W^{k+1} + \widetilde\Lambda_{k+1}) < D_c(W^{k} + \widetilde\Lambda_{k}) - \frac{D_0 \varep}{4\ell}.\]
In this case, we get the conclusion (1) of the lemma.
\item  Or for more than the half of such $\overline x$, we are in the case (ii). Choosing the couple $(w_i,w_{i'})$ such that the measure of the set of corresponding $\overline x$ is the greatest, we get 
\[D_c\bigg(\Big(W^{k+1} + \widetilde\Lambda_{k+1} + (0^{kn} , w_i) \Big) \cap \Big(W^{k+1} + \widetilde\Lambda_{k+1} + (0^{kn} , w_{i'}) \Big)\bigg)\ge \frac{D_0 \varep}{(\ell-1)(\ell-2)}.\]
In this case, we get the conclusion (2) of the lemma.
\end{enumerate}
\end{proof}

\begin{figure}[t]
\begin{center}
\includegraphics[width=.8\linewidth]{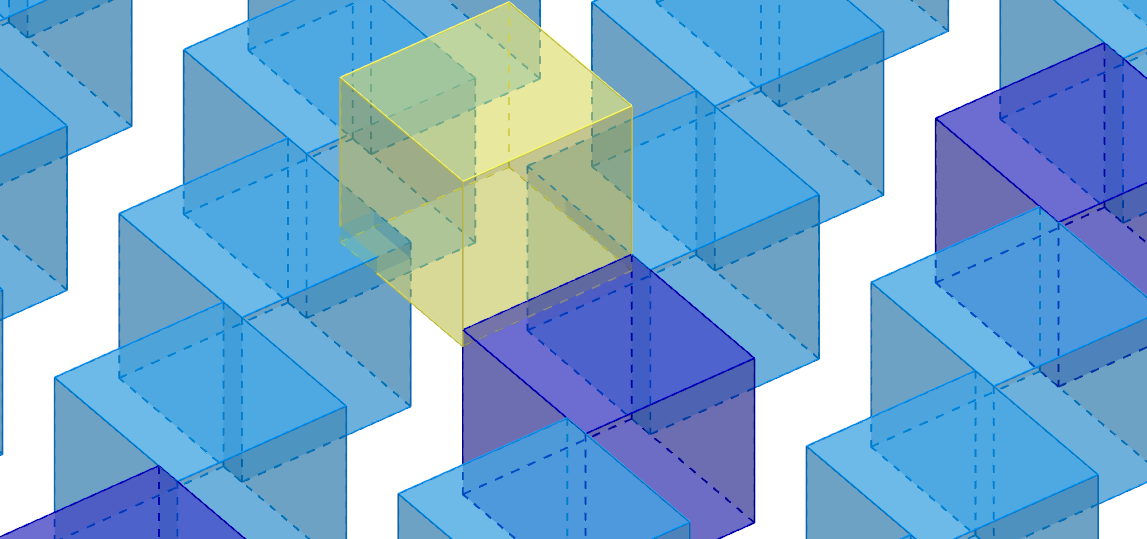}
\caption[Intersection of cubes, rate bigger than $1/3$]{Intersection of cubes in the case where the rate is bigger than $1/3$. The thickening of the cubes of $W^k + \widetilde\Lambda_k$ is represented in dark blue and the thickening of the rest of the cubes of $W^{k+1} + \widetilde\Lambda_{k+1}$ is represented in light blue; we have also represented another cube of $W^{k+2} + \widetilde\Lambda_{k+2}$ in yellow. We see that if the projection on the $z$-axis of the centre of the yellow cube is smaller than 1, then there is automatically an intersection between this cube and one of the blue cubes.}\label{FigInterCubes3DSupDemi}
\end{center}
\end{figure}

We can now prove Lemma~\ref{ConjPrincip}.

\begin{proof}[Proof of Lemma~\ref{ConjPrincip}]
We proceed by induction on $k$. Suppose that $\widetilde\Lambda_k$ is such that $D_c(W^k + \widetilde\Lambda_k)>1/\ell$. Then, Lemma~\ref{EstimTauxN} ensures that it is not possible to have $\ell$ disjoint translates of $W^k + \widetilde\Lambda_k$. Applying Lemma~\ref{LemDeFin}, we obtain that either $D_c(W^{k+1} + \widetilde\Lambda_{k+1})<D_c(W^k + \widetilde\Lambda_k)$, or 
it is not possible to have $\ell-1$ disjoint translates of $W^{k+1} + \widetilde\Lambda_{k+1}$. And so on, applying Lemma~\ref{LemDeFin} at most $\ell-1$ times, there exists $k'\in \llbracket k+1,k+\ell-1\rrbracket $ such that $W^{k'} + \widetilde\Lambda_{k'}$ has additional auto-intersections. Quantitatively, combining Lemmas~\ref{EstimTauxN} and \ref{LemDeFin}, we get
\[D_c\big(W^{k+\ell-1} + \widetilde\Lambda_{k+\ell-1}\big) \le D\big(W^k + \widetilde\Lambda_k\big) - \frac{\varep}{4\ell} \left(\frac{\varep}{\ell^2}\right)^{\ell-1} 2\frac{\ell D_c(W^k + \widetilde\Lambda_k)-1}{\ell(\ell-1)},\]
thus
\[D_c\big(W^{k+\ell-1} + \widetilde\Lambda_{k+\ell-1}\big) - 1/\ell \le  \left( 1 -\frac12 \left(\frac{\varep}{\ell^2}\right)^\ell\right) \Big(D_c\big(W^k + \widetilde\Lambda_k\big) - 1/\ell \Big),\]
in other words, if we denote $\overline\tau^k = \overline\tau^k(B_1,\cdots,B_k)$ and $\lambda_\ell = 1 - \left(\frac{\varep}{\ell^2}\right)^\ell$,
\begin{equation}\label{EqFinChap}
\overline\tau^{k+\ell-1} - 1/\ell \le  \lambda_\ell\Big( \overline\tau^k - 1/\ell \Big).
\end{equation}
This implies that for every $\ell>0$, the sequence of rates $\overline\tau^k$ is smaller than a sequence converging exponentially fast to $1/\ell$: we get Equation~\eqref{EstimTauxExp}. In particular, the asymptotic rate of injectivity is generically equal to zero.
\end{proof}

\bibliographystyle{amsalpha}
\bibliography{../../Biblio}

\end{document}